\newsavebox\verbbox
\numberwithin{equation}{section}
\newcommand{\un}{\mathbf{1}}
\newcommand{\N}{\mathbb{N}} 
\newcommand{\R}{\mathbb{R}} 
\newcommand{\Z}{\mathbb{Z}} 
\newcommand{\X}{\mathcal{X}}
\newcommand{\egaldef}{:=} 
\newcommand{\paren}[1]{\left( \left. #1 \right. \right)}
\newcommand{\parenb}[1]{\bigl( #1 \bigr)}
\newcommand{\parenB}[1]{\Bigl( #1 \Bigr)}
\newcommand{\croch}[1]{\left[ \left. #1 \right. \right]}
\newcommand{\set}[1]{\left\{ \left. #1 \right. \right\}}
\newcommand{\absj}[1]{\left\lvert #1 \right\rvert} 
\newcommand{\Proba}{\mathbb{P}}
\newcommand{\moy}[1]{\left\langle #1 \right\rangle}
\newcommand{\moyb}[1]{\bigl\langle #1 \bigr\rangle}
\newcommand{\moyB}[1]{\Bigl\langle #1 \Bigr\rangle}
\DeclareMathOperator{\sign}{sign} 
\DeclareMathOperator{\card}{Card} 
\newcommand{\cK}{\mathcal{K}}
\newcommand{\cN}{\mathcal{N}}
\newcommand{\HH}{SP}
\newcommand{\rhowinter}{\rho}
\newcommand{\rhoseason}{m_{\rho}}
\newcommand{\rhoseasondiscrete}[1]{m_{\rho,#1}}
\newcommand{\rhomax}{\ensuremath{\mu_{\max}}} 
\newcommand{\survival}{\ensuremath{\mathbb{S}}}
\newtheorem{proposition}{Proposition}[section]
\newtheorem{theorem}[proposition]{Theorem}
\newtheorem{corollary}[proposition]{Corollary}
\theoremstyle{remark}
\newtheorem{remark}[proposition]{Remark}
\theoremstyle{definition}
\newtheorem{definition}[proposition]{Definition}
\newcommand{\D}{\mathcal{D}}
\newcommand{\K}{\mathcal{K}}
\newcommand{\esssup}{\operatornamewithlimits{ess\,sup}} 
\newcommand{\essinf}{\operatornamewithlimits{ess\,inf}} 
\newcommand{\JA}{\HH$_1$}
\begin{document}

\author[S. Arlot, S. Marmi, D. Papini]{Sylvain Arlot, Stefano Marmi, Duccio Papini}

\title[Chaotic livestock commodities market cycles]{Coupling the Yoccoz-Birkeland population model with price dynamics: chaotic livestock commodities market cycles}
\date{\today \quad Preliminary version}

\address{Laboratoire de Math\'ematiques d'Orsay, Univ. Paris-Sud, CNRS, Universit\'e Paris-Saclay, 91405 Orsay, France; 
Select Project-Team,  Inria Saclay - Ile de France}
\email{sylvain.arlot@u-psud.fr}

\address{Scuola Normale Superiore,
Piazza dei Cavalieri,
7 - 56126 Pisa, Italy}
\email{stefano.marmi@sns.it}

\address{Universit\`a degli Studi di Udine, Dipartimento di Scienze Matematiche, Informatiche e Fisiche,
via delle Scienze 206, 33100, Udine, Italy}
\email{duccio.papini@uniud.it}

\begin{abstract}
We propose a new model for the time evolution of livestock commodities prices
which exhibits endogenous deterministic stochastic behaviour.
The model is based on the Yoccoz-Birkeland integral equation, a model first
developed for studying the time-evolution of
single species with high average  fertility, a relatively short mating season and 
density-dependent reproduction rates. 
This equation is then coupled with 
a differential equation describing the price of a livestock commodity driven by the unbalance between its demand and supply.
At its birth the cattle population is split into two parts: reproducing females and cattle for butchery. The relative amount of the two is determined 
by the spot price of the meat. 
We prove the existence of an attractor and we investigate numerically its properties: the strange attractor existing for the 
original Yoccoz-Birkeland model is persistent but its chaotic behaviour depends also on the time evolution of the price in an essential way. 
\end{abstract}

\thanks{%
This work was started during a visit of SA to the Scuola Normale
Superiore in Pisa and to the Complex Systems Center of the University of Siena, 
the hospitality  of which is gratefully acknowledged.
The authors acknowledge the support of the Centro di Ricerca Matematica Ennio de Giorgi
and of UniCredit Bank R\&D group for financial support through the "Dynamics and Information Theory Institute"
at the Scuola Normale Superiore.
Most of this work was done while SA was a researcher at CNRS 
(D\'epartement d'Informatique / \'Ecole Normale Sup\'erieure, UMR 8548 CNRS/\-ENS/\-INRIA), 45, rue d'Ulm, F-75230 PARIS Cedex 05, France). 
DP was supported by the INdAM-GNAMPA project ``Propriet\`a qualitative di alcuni problemi ai limiti''.
We are also grateful to Paolo Nistri for contributing to the initial formulation of the model 
and to Nicola Zanda for useful discussions 
on livestocks farming (in particular Cinta Senese and pig). 
}

\maketitle

\section{Introduction}
\label{sec.intro}
Twenty years ago a new model of population dynamics which exhibits endogenous chaotic behaviour has been 
proposed by J.-C. Yoccoz and H. Birkeland \cite{Yoc_Bir:1998}, 
(we refer to \cite{AMMY:2018} for a historical perspective 
on this work).
The model was prompted from the evidence of aperiodic large oscillations (2-3 orders of magnitude on a 3-5 years time span)
in the time evolution of the population of a species of rodents, Microtus Epiroticus (sibling vole) on Svalbard Islands. 
This species has a high fertility rate which has a strong dependence on seasonal factors 
(due to harsh weather conditions in winter)
and on the population density. 
Indeed, few good reproduction spots are available and their quality decreases as the population increases. 
Despite the absence of any significant predator and the relative abundance of food, one observes high oscillations of population. 

The Yoccoz-Birkeland model was studied through a mathematical analysis 
and some simulation experiments in \cite{Arl:2004,Nie_Pac_Vie:2012}. 
In short, it has been showed that such a deterministic model can produce complex dynamics 
with a high sensitivity to initial conditions,
only by the combination of density-dependent fertility, the lag due to the maturation age and a periodic seasonality. 
A detailed account of these results is provided in Section~\ref{sec.modelYB}. 
\medskip

In this paper, we introduce a new model coupling the population dynamics in the Yoccoz-Birkeland model 
with an equation modelling the price dynamics of a livestock commodity market inspired by \cite{Bel_Mac:1989}. 
A cattle population is split at the birth into reproducing females and cattle for butchery. 
The relative amount of the two is determined by the spot price of the meat whereas the logarithmic derivative of the price is 
determined by the unbalance between the demand and supply of the meat.  
On the population side, seasons (or artificial synchronization of births) 
and maturation lags are also taken explicitely into account.

The importance of the livestock commodities market in economics is related to the fact that it provides some of the oldest 
and best documented examples of business cycles. 
Approximately periodic fluctuations of supply and prices were
first observed at the beginning of last century in hog markets 
\cite{Hanau:1928}, \cite{Eze:1938} and since then they have been the object of many studies.  
Beef cattle stocks ``are among the most periodic time series in economics'' \cite{Ros_Mur_Sch:1994}, 
a fact also related to the double role played by cattle as capital as well as consumption goods.

In many respects, the continuing presence of any price cycle is disturbing: 
if a predictable price cycle exists, then producers responding in a countercyclical fashion 
could earn larger than ``normal'' profits over time \cite{Hay_Sch:1987}. 
Such profits could occur even with lags in the production process (substantial gestation
and maturation times stretch it over lengthy
intervals of time) because predictable price movements would still influence production decisions. 
Eventually, countercyclical production response would smooth out price fluctuations at the market level, 
causing the cycle to disappear.

An alternative explanation for the existence of a business cycle is that the cycle itself is not perfectly predictable: 
the law of motion may be a deterministic nonlinear relationship that generates unpredictable patterns \cite{Gra_Mal:1986}). 
Cobweb models show that complicated price dynamics may indeed occur due to nonlinearities \cite{Chi:1988, Hommes:1994}
and that simple expectation rules in a nonlinear environment may lead to chaotic price fluctuations
\cite{Hommes:2013}. 

Our model shows how, under quite natural assumptions, simply connecting the percentage 
of reproducing females with the price equation gives 
rise to a chaotic time evolution of price. 
This is characterized by a series of ``cycles'' of booms and busts (i.e. rapid increase or decrease).
Another important feature of the model is that it takes into account some specific characteristics 
of the production of livestock
commodities often neglected in the literature: 
for example the existence of time-lags between the producer decision on the reproduction
strategy and the butchery of the calves, the synchronization of births and their seasonality.
We point out that some reproductive constraints included in Yoccoz-Birkeland model 
are realized also in the production of livestock
commodities, besides synchronization of births and seasonality.
For example, the development of breeding facilities requires time and limits the reproductive capacity.
\medskip

Here follows the plan of our paper.
In Section~\ref{sec.modelYB} we review the model proposed by Yoccoz and Birkeland and the relative results that were shown in
\cite{Arl:2004,Nie_Pac_Vie:2012}.

Our model is derived in Section~\ref{sec.our-model} by coupling in a suitable way the Yoccoz-Birkeland model with a differential
equation for the logarithmic derivative of the price which is inspired by \cite{Bel_Mac:1989}.

In Section~\ref{sec.math} our model is rigorously analyzed.
Global existence and uniqueness for initial value problems are obtained along with some useful estimates on the solutions.
In particular we show that global boundedness and persistence follows 
under suitable assumptions involving some relevant biological parameters.
Finally we prove the existence of a global attractor containing at least a non-trivial periodic solution.

In Section~\ref{sec.simus} numerical experiments show this new model 
can produce complex dynamics in the population size and the price.
The main setting of the parameters is chosen having in mind the attractor detected in \cite{Arl:2004} for Yoccoz-Birkeland model,
on the one hand, and the hog market, on the other.
The attractor we found has both sensitive dependence on initial conditions and noninteger dimension.
The relevance of the presence of the market dynamics, based on the unbalance of demand and supply, 
is outlined in a second numerical experiment
in which the population dynamics is suitably decoupled from the price evolution 
and gives rise to a behavior which looks like by no means
chaotic, but asymptotically periodic, in fact.
Bifurcation diagrams shows that complex dynamics persists for realistic values of the maximal fertility 
and also for weaker levels of
dependence of the fertility rate on the total population.
Technical details of the numerics are given in the Appendix.

\section{The Yoccoz-Birkeland model}
\label{sec.modelYB}

This section recalls the model proposed in \cite{Yoc_Bir:1998} 
and gives a brief summary of results obtained in \cite{Arl:2004,Nie_Pac_Vie:2012}. 

The Yoccoz-Birkeland model aims at modelling the population of mature females of a single species 
with density-dependent reproduction
rate and whose reproduction strategy may be influenced by seasons or other external factors.

More precisely, the model proposed by Yoccoz and Birkeland goes as follows:
\begin{itemize}
\item $t$ is the time measured in years;
\item $A_0$ is the age (in years) at which females reach sexual maturity;
\item $A_1$ is the maximum age (in years) for females;
\item $N(t)$ is the number of sexually mature females at time $t$;
\item $m(N)$ is the density-dependent female reproduction rate and measures the average number of female cubs that a single female can give birth
to in a year in optimal weather conditions when the total number of mature females is $N$; it is reasonably a decreasing function of $N$;
\item $\rhoseason(t)$ is the seasonal factor and gives the fraction of females actually reproducing at time $t$; typically it is $1$-periodic;
\item $\survival(a)$ is the fraction of newborn females still alive at age $a$ (in years); 
\end{itemize}
then the number of females with age ranging in $[a,a+da]$ is
given by
\[
N(t-a)m \bigl( N(t-a) \bigr) \rhoseason(t-a)\survival(a)da \enspace .
\]
Therefore, $N$ satisfies the following integral equation
\begin{equation}\label{eq.YoccozBirkeland}
N(t)=\int_{A_0}^{A_1}N(t-a)m \bigl( N(t-a) \bigr) \rhoseason(t-a)\survival(a)da \enspace ,
\end{equation}
which allows to uniquely determine $N(t)$ for $t\in[t_0,t_0+A_0]$ (and for every other $t>t_0$ by recursion)
if $N(t)$ is known for $t\in[t_0-A_1,t_0]$.

This model have been proposed to explain the behaviour of the population of Microtus Epiroticus in Svalbard Isles which has high average fertility, while its numbers show large fluctuations so that sometimes the species looks to be even close to extinction in spite of the absence of any significant predation \cite{Yoc_Ims:1999}.
In the case of this species of small rodents the biological explanation of the observed behavior relies on the following remarks.
First, the adverseness of the environment in which the population lives causes a shortage of good reproduction spots and, as a consequence, the larger is the number of sexually active females (above some threshold) the smaller becomes the average individual fertility because the spots become overcrowded.
Secondly, the shortness of the reproduction season (summer) induces a decrease of the age at which females reach sexual maturity.
In this way, the females that are born at the beginning of summer are able to give birth to their first cubs before the end of the same summer.
These facts translate into a density-dependent reproduction rate that decreases as the population increases and into a sexual maturation age that is less than the average length of summer.

In \cite{Arl:2004} the following choices were made: 
\begin{align}
\survival(a)&=1-\frac{a}{A_1}\,,\quad\text{for }a\in[0,A_1]\nonumber\\
\rhoseason(t)&=
\begin{cases}
0 &\text{if }0\le t< \rhowinter \text{ (mod 1)}\\
1 &\text{if }\rhowinter \le t<1 \text{ (mod 1)}
\end{cases}\label{eq.mrho.Arl:2004}\\
m(N)     &=
\begin{cases}
m_0            &\text{if }N\le 1\\
m_0N^{-\gamma} &\text{if }N>1
\end{cases}\label{eq.density.dependent.fertility}
\end{align}
where $ \rhowinter \in(0,1)$ stands for the average length of winter, $m_0$ is the average yearly female fertility
in optimal weather and environmental conditions and $\gamma\ge 1$. 

Equation \eqref{eq.YoccozBirkeland} gives rise to a continuous semi-group as follows.
For each $t_0\in\R/\Z$ set
\[
Y_{t_0}=\left\{N\in C \bigl( [-A_1,0] \bigr) : N(0)=\int_{A_0}^{A_1}N(-a)m \bigl( N(-a) \bigr) \rhoseason(t_0-a)\survival(a)da\right\}
\]
and consider the phase space
\[
Y^\sharp=\{(t,N):t\in\R/\Z, N\in Y_t\}
\]
which is a complete metric space with respect to the distance $d((s,M),(t,N))=|s-t|_{\R/\Z}+\|M-N\|_\infty$.
Then the semi-group $(T^s)_{s\ge 0}$ generated by \eqref{eq.YoccozBirkeland} is given by
$ T^s(t,N) =(t+s(\operatorname{mod}1),N_t^s) $ where:
\[
N_t^s(-a) =
\begin{cases}
 N(s-a)                                                                           &\hspace{-1.5cm}\text{if }0\le s\le a\le A_1 \\
\displaystyle\int_{A_0}^{A_1} \!\!\! N(s-a-b)m \bigl( N(s-a-b) \bigr) \rhoseason(t+s-a-b)\survival(b)db &\text{otherwise.}
\end{cases}
\]
The following result holds.
\begin{theorem}[\cite{Arl:2004}]\label{thm.Arlot2004}
Assume that $N \mapsto N m(N)$ is uniformly continuous on $[0,+\infty)$ and that 
\begin{enumerate}
\item $m_0/2 \le m(N)\le m_0$ if $N \le 1$ and $\min\{1/2, N^{-\gamma}\} m_0 \le m(N)\le m_0 N^{-\gamma}$ if $N \ge 1$, $\gamma \ge 1$;
\item $0 \le \rhoseason(t) \le 1$ for all $t$ and $\rhoseason(t) = 1$ on an interval of length $1 - \rhowinter > 0$, $\rhowinter > 0$; 
\item $A_1 \ge \max\{2A_0, A_0+1\}$ and $c_0 m_0 > 2$ where $c_0 = \int_{A_0 + \rhowinter}^{A_0 + 1} \survival(a) da$.
\end{enumerate}
Moreover, let
\[
N_{\max} = m_0 \frac{A_1}{2} \left(1 - \frac{A_0}{A_1}\right) \quad \text{and} \quad L = m_0 \left(3 - \frac{A_0}{A_1}\right)
\]
and consider the set
\[
\begin{aligned}
\mathcal{K}=\left\{(t_0,N)\in Y^\sharp:\vphantom{\int}\right.&\frac{c_0m_0}{2}N_{\max}^{1-\gamma}\le N(s)\le N_{\max} 
\ \forall s\in[-A_1,0]\text{ and } 
\\
&\left.\vphantom{\int} \bigl\lvert N(s_1)-N(s_2) \bigr\rvert \le L \lvert s_1-s_2 \rvert
\ \forall s_1,s_2\in[-A_1,0]\right\}.
\end{aligned}
\]
Then
\begin{enumerate}
\item $\mathcal{K}$ is compact and $T^s(t_0,N)\in\mathcal{K}$ for all $s\ge 0$ and all $(t_0,N)\in\mathcal{K}$;
\item for each $(t_0,N)\in Y^\sharp$ there exists $s_0\ge 0$ such that $T^s(t_0,N)\in\mathcal{K}$ for all $s\ge s_0$;
\item the compact set $\Lambda=\bigcap_{n\ge 0}T^n(\mathcal{K})$ is invariant: $T^s(\Lambda)=\Lambda$ for all $s\ge 0$;
\item for each neighbourhood $U$ of $\Lambda$ and each $(t_0,N)\in Y^\sharp$ there exists $s_0\ge 0$ such that $T^s(t_0,N)\in U$ for all $s\ge s_0$;
\item $\Lambda$ is a global attractor of $\left((T^s)_{s\ge 0},Y^\sharp\right)$.
\end{enumerate}
\end{theorem}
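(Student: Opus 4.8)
The plan is to follow the standard route to a global attractor for a dissipative semi-group: produce a compact, positively invariant set that absorbs every orbit, and then take the intersection of its forward iterates. Everything rests on three a priori estimates for solutions of \eqref{eq.YoccozBirkeland} --- an upper bound, a Lipschitz bound, and a lower (persistence) bound --- which are exactly the three conditions cutting out $\mathcal{K}$. I would establish these first, and only afterwards invoke the abstract theory for the remaining conclusions.

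The upper and Lipschitz bounds are the easy ingredients. Since $\gamma \ge 1$ and assumption (1) give $Nm(N) \le m_0$ for every $N \ge 0$, while $\rhoseason \le 1$ by assumption (2), the integrand in \eqref{eq.YoccozBirkeland} is dominated by $m_0\survival(a)$; integrating $\survival(a) = 1 - a/A_1$ over $[A_0,A_1]$ yields $N(t) \le N_{\max}$. For the Lipschitz estimate I would rewrite $N(t)$ in the substituted form $\int_{t-A_1}^{t-A_0} N(u)m(N(u))\rhoseason(u)\survival(t-u)\,du$, so that the $t$-dependence sits only in the moving endpoints and in the argument of $\survival$; a difference $N(t)-N(t')$ then has three contributions --- the two endpoints, each controlled by $Nm(N) \le m_0$ and $\survival \le 1$, and the variation of $\survival$, controlled through its Lipschitz constant $1/A_1$ --- whose (generous) tally is the stated $L = m_0(3 - A_0/A_1)$.

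The heart of the matter is the lower bound, i.e. persistence, and it is the only place where the hypothesis $c_0m_0 > 2$, together with the seasonal structure and maturation lag furnished by assumptions (2)--(3), genuinely enters. The mechanism is regeneration over one breeding season: assumption (2) guarantees an age window $[A_0+\rhowinter, A_0+1]$ --- contained in $[A_0,A_1]$ by assumption (3) --- on which $\rhoseason(t-a) = 1$ for a suitable $t$, and restricting \eqref{eq.YoccozBirkeland} to this window bounds $N(t)$ from below by $c_0$ times the least value of $Nm(N)$ attained by the corresponding cohort. Since $\gamma \ge 1$, assumption (1) gives $Nm(N) \ge \tfrac{m_0}{2}N^{1-\gamma}$ for $N \ge 1$, a decreasing quantity that is at least $\tfrac{m_0}{2}N_{\max}^{1-\gamma}$ on $[1,N_{\max}]$; a cohort sitting near the maximal level therefore feeds back the bound $N(t) \ge \tfrac{c_0m_0}{2}N_{\max}^{1-\gamma}$. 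The condition $c_0m_0 > 2$ is what makes this self-consistent: in the range $N < 1$, where $Nm(N) = m_0N$, each regeneration multiplies the infimum of $N$ by a factor exceeding $1$ instead of letting it decay, so the population cannot collapse and must climb back to the near-maximal level that powers the previous estimate. Turning this ``no extinction'' heuristic into a quantitative, self-improving bound --- valid uniformly on $[-A_1,0]$ and reached from arbitrary initial data --- is the step I expect to be the main obstacle.

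With the three estimates in hand, the conclusions follow in order. For (1), compactness of $\mathcal{K}$ is Arzel\`a--Ascoli: its members are uniformly bounded, equi-Lipschitz functions on the compact interval $[-A_1,0]$, while the $t_0$-coordinate ranges over the compact space $\R/\Z$; positive invariance is precisely the assertion that $T^s$ reproduces the three estimates, which is exactly what they were designed to give. For (2), the same estimates applied without assuming the datum lies in $\mathcal{K}$ show that after a finite transient every orbit satisfies all three bounds, so $\mathcal{K}$ is absorbing. Conclusions (3)--(5) are then the abstract theory of a continuous semi-group possessing a compact absorbing set: $\Lambda = \bigcap_{n\ge 0}T^n(\mathcal{K})$ is nonempty and compact, it is fully invariant --- here the $1$-periodicity in $t$ is convenient, since the integer-time iterates fix the $\R/\Z$-coordinate and full invariance under all $T^s$ then follows by a standard backward-orbit argument --- and it attracts $\mathcal{K}$, hence, via (2), every point of $Y^\sharp$, which is exactly statements (4) and (5).
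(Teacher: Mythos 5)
First, a point of reference: this paper never proves Theorem~\ref{thm.Arlot2004} --- it is quoted from \cite{Arl:2004} --- so the only internal yardstick is the parallel analysis of the coupled model in Section~\ref{sec.math} (Propositions~\ref{pro.IVP} and~\ref{pro.uniform.persistence.for.N}, and Theorem~\ref{thm.global.attractor}). Measured against that, your overall strategy is exactly the right one, and your two easy estimates are correct: $Nm(N)\le m_0$ together with $\rhoseason\le 1$ and $\int_{A_0}^{A_1}\survival(a)\,da=\frac{A_1}{2}(1-A_0/A_1)^2\le \frac{A_1}{2}(1-A_0/A_1)$ gives the bound $N\le N_{\max}$, and the moving-endpoint decomposition plus the $1/A_1$-Lipschitz constant of $\survival$ gives $2m_0+m_0(1-A_0/A_1)=L$; these are structurally the same computations as in Proposition~\ref{pro.IVP} (which has no survival factor). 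The Arzel\`a--Ascoli compactness, the absorbing property, and the passage to $\Lambda=\bigcap_n T^n(\mathcal K)$ with full invariance via backward orbits are likewise standard once the three a priori bounds are in place.

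The gap is exactly where you flag it, and it is genuine on two counts. First, your regeneration estimate is stated ``for a suitable $t$,'' but the argument needs it for \emph{every} $t$: for any $t$, the set of ages $a\in[A_0,A_0+1]$ with $\rhoseason(t-a)=1$ has measure at least $1-\rhowinter$, and since $\survival$ is decreasing the worst possible position of that set inside $[A_0,A_0+1]$ is at the oldest ages --- this is precisely why $c_0=\int_{A_0+\rhowinter}^{A_0+1}\survival(a)\,da$ is a \emph{uniform} constant, and without uniformity in $t$ no induction can close. Second, the ``self-improving bound'' you defer is the actual content of the persistence result, and it can be organized the way the paper does for its own model in Proposition~\ref{pro.uniform.persistence.for.N}: (i) prove the one-window estimate $N(t)\ge\frac{c_0m_0}{2}\min\bigl\{\inf_{[\hat t-A_1,\hat t]}N,\;N_{\max}^{1-\gamma}\bigr\}$ for $t\in[\hat t,\hat t+A_0]$, using $Nm(N)\ge\frac{m_0}{2}\min\{N,N_{\max}^{1-\gamma}\}$ on $[0,N_{\max}]$; (ii) if $\inf_{[\hat t-A_1,\hat t]}N\ge N_{\max}^{1-\gamma}$ ever holds, then $c_0m_0>2$ and induction keep $N\ge\frac{c_0m_0}{2}N_{\max}^{1-\gamma}$ for all later times; (iii) if it never holds, chaining (i) across $\lceil A_1/A_0\rceil$ consecutive windows of length $A_0$ (this is where $A_1\ge 2A_0$, $A_1\ge A_0+1$ matter) shows the infimum over successive $A_1$-windows is multiplied by $c_0m_0/2>1$ each time, hence grows geometrically, contradicting $N\le N_{\max}$. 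This persistence-or-blow-up dichotomy is the missing quantitative step; with it supplied, the rest of your outline yields conclusions (1)--(5) as you describe.
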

Simulations were also performed in \cite{Arl:2004} on a discretization of \eqref{eq.YoccozBirkeland} 
with a smoothed version of \eqref{eq.mrho.Arl:2004} and \eqref{eq.density.dependent.fertility} 
for $\rhoseason(t)$ and $m(N)$, and the choices $m_0 = 50$ and $A_1 = 2$. 
The parameter space was explored with respect to $\gamma \in [2, 16]$, $\rhowinter \in [0, 0.5]$, $A_0 \in [0, 0.4]$.
The computations showed the existence of periodic points, of possible Hopf bifurcations, 
the coexistence of different attractors and the presence of complex dynamics.
In particular, a complex attractor is outlined in the cases $\rhowinter=0.30$, $\gamma=8.25$ and $A_0=0.15$ or $A_0=0.18$; 
a detailed study of its dynamical features is done for $A_0=0.15$.
In any case, the analysis and the simulations in \cite{Arl:2004} show that the Yoccoz-Birkeland model 
recovers to some extent the general behavior and the biological characteristics of Microtus Epiroticus outlined above.
Numerical solutions of \eqref{eq.YoccozBirkeland} have large oscillations, with minima close to extinction, 
thanks to the interaction of the density-dependent fertility, 
the relatively quick sexual maturation of females and the average duration of winter.

Recently, the Yoccoz-Birkeland model has been the subject of the paper \cite{Nie_Pac_Vie:2012}.
There the analogue of Theorem~\ref{thm.Arlot2004} and the existence of periodic points have been proved 
for the discrete version of the model and
numerical simulations have been done with special emphasis to the case with $A_0=0.18$.

\section{A model coupling market and population dynamics}
\label{sec.our-model}

This section presents the new model proposed in this paper for cattle population and price dynamics. 
The idea is to couple a population dynamics model similar to the Yoccoz-Birkeland with a market dynamics model.
The (cattle) population is split into two parts: on the one hand, females for reproduction; on the other hand, cattle for butchery (all the males plus some of the females).

The mechanism is the following:
\begin{itemize}
\item At the birth of some babies, part of the newborn females are put in the reproduction line, and the remaining newborn females are put in the butchery line together with all newborn males. 
The fraction $R$ of newborn females that will reproduce (chosen by the breeder) is only determined by the price of meat at birth time.
\item In the reproducing line, females have children between ages $A_0$ and $A_1$.
Their fertility can be affected by seasons, or because births are synchronized by the breeder (through a function $\rhoseason(t)$).
Reproducing females older than $A_1$ (hence, non fertile) are not taken into account anywhere in the model.
\item In the butchery line, cattle can be butchered between ages $\Omega_0$ and $\Omega_1$. So, only the (alive) butchery population between ages $\Omega_0$ and $\Omega_1$ can count as a ``supply'' for the market. 
\item The price evolution is a simple function of the supply (which comes from the butchery line population dynamics) and the demand (which depends only on the price).
\end{itemize}

Note that contrary to the Yoccoz-Birkeland model, we assume no mortality before ages $A_1$ (resp. $\Omega_1$). 

\subsection{Notation and parameters}

\begin{itemize}
\item $t$ is the time measured in years.
%
%
\item $N_r(t)$ is the total population of {\em mature females} that are in the reproducing line 
and can give birth to pups at time $t$.
\item $N_{b}(t)$ is the total population of cattle that is {\em suitable for butchery} at time $t$ 
(both males and non-reproducing females, old enough and in the butchery line).
\item $R(P)$ is the fraction of newborn females that are put in the reproducing line 
when the price of meat is $P$ when they are born.
\item $A_0$ is the age from which females can have children (i.e., age of sexual maturity + length of the first gestation).
\item $A_1$ is the maximal age at which females can give birth to children 
(i.e., age of sexual unfertility + length of the last gestation).
\item $\Omega_0$ is the minimal age at which the cattle (male or female) can be butchered.
\item $\Omega_1$ is the maximal age at which the cattle (male or female) can be butchered. 
Note that $\Omega_1$ could possibly be enlarged (compared to its biological value) if the meat can be frozen after butchering.
\item $m(N)$ is the average annual female (resp. male) fertility of each mature female when the total population is $N$, 
i.e., the average number of female (resp. male) babies per year for a single mature female. 
Typically it is a decreasing function as in \eqref{eq.density.dependent.fertility}. 
We assume the sex ratio is $1/2$, i.e., the average number of male babies is equal to the average number of female babies 
(hence, $m(N)$ is half of the average annual fertility).
\item $\rhoseason(t)$ is the 1-periodic step function (with $\int_0^1 \rhoseason(t)dt=1$) 
that accounts for a possible modulation of fertility during each year (births synchronization or seasonal effects).
%
%
\item $P(t)$ is the market price of meat at time $t$.
\item $D(P)$ is the demand of the market (per time unit) when the price of meat is $P$ (typically a decreasing function of $P$).
\item $S(t)$ is the supply to the market (per time unit) at time $t$ (typically proportional to $N_b(t)$).
\item $\lambda$ is a ``temperature'' parameter of the meat market: 
higher values of $\lambda$ correspond to bigger price variations
in response to the same demand/supply imbalance.
\item $F(D,S)$ is the function of demand and supply that rules the meat price dynamics.
\end{itemize}

\subsection{Population dynamics}
The population dynamics model is strongly inspired from the Yoccoz-Birkeland model \eqref{eq.YoccozBirkeland}.
In order to derive the equations satisfied by $N_r$ and $N_b$, let us define the following additional notation: 

\begin{itemize}
\item $B_f(t)$ is the density of newborn female cattle at time $t$ (i.e., $B_f(t)dt$ females are born between $t$ and $t+dt$).
\item $B_m(t)$ is the density of newborn male cattle at time $t$.
\item $B_r(t)$ is the density of newborn (female) cattle that are put in the reproducing line at time $t$.
\item $B_b(t)$ is the density of newborn cattle that are put in the butchery line at time $t$.
\end{itemize}

First, the male and female birth densities at time $t$ are given by:
\begin{equation}
\label{eq.birth}
B_f(t) = B_m(t) = \rhoseason(t) m \bigl( N_r(t) \bigr) N_r(t) 
\enspace . 
\end{equation}

The breeder decides at birth time $t$ which fraction $R(P(t))$ of the newborn females is going into the reproducing line, which gives:
\begin{align}
\label{eq.birth.reproduction}
B_r(t) &= B_f(t) R \bigl( P(t) \bigr) 
= \rhoseason(t) m \bigl(N_r(t) \bigr) N_r(t) R \bigl( P(t) \bigr) \\
\mbox{and} \qquad \notag
B_b(t) &= B_m(t) + B_f(t)  \bigl[ 1 - R  \bigl( P(t) \bigr) \bigr] \\
&= \rhoseason(t) m \bigl( N_r(t) \bigr) N_r(t) \bigl[ 2 - R  \bigl( P(t)  \bigr) \bigr] 
 \enspace .
\label{eq.birth.butchery}
\end{align}

We assume no mortality at all between birth and the end of reproduction time for females, 
so the number of mature females is given by
\begin{equation}
\label{eq.mature.females}
N_r(t) = \int_{A_0}^{A_1}  B_r(t-a) da 
= \int_{A_0}^{A_1} \rhoseason(t-a) m \bigl( N_r(t-a) \bigr) N_r(t-a) R \bigl( P(t-a)  \bigr) da 
\enspace .
\end{equation}

Similarly, we assume no mortality at all between birth and butchering time for males and females in the butchery line, 
so the size of the cattle population suitable for butchery is (\emph{without any butchering before age $\Omega_1$})
\begin{equation}
\label{eq.butchery.potential}
N_b(t) = \int_{\Omega_0}^{\Omega_1} \! B_b(t-a) da 
= \int_{\Omega_0}^{\Omega_1} \! \rhoseason(t-a) m \bigl( N_r(t-a) \bigr) N_r(t-a) \bigl[ 2 - R  \bigl( P(t-a)  \bigr) \bigr] da 
\enspace . 
\end{equation}

\subsection{Market dynamics}
Inspired by \cite{Bel_Mac:1989} (see also \cite{Mac:1989}), we consider the following differential equation satisfied by the
price as a function of the demand $D(P)$ and the supply $S(t)$:
\begin{equation}
\label{eq.price}
\frac{P^{\prime}(t)}{P(t)} = \lambda F \Bigl( D \bigl( P(t) \bigr) , S(t) \Bigr) 
\quad \mbox{where} \quad F(D,S)=\frac{D-S}{D+S} \enspace .
\end{equation}
Other functions $F$ could be considered such as $F(D,S) = (D-S)/S$. The parameter $\lambda>0$ measures the ``temperature'' of the market, i.e., how fast can the price goes up or down.

The function $P \to D(P)$ is a decreasing function of the price $P$. 

\medskip

In order to define the supply function $S$, we assume that all the cattle in the butchery line is butchered exactly at age $\Omega_1$ (and never before), while the market takes into account all the population $N_b(t)$ for determining the price in equation \eqref{eq.price}.
This leads to choosing
\begin{equation}
\label{eq.modelB.supply}
S(t) 
= \frac{N_b(t)}{\Delta\Omega} 
= \frac{1}{\Delta\Omega} \int_{\Omega_0}^{\Omega_1} \rhoseason(t-a) 
m \bigl( N_r(t-a) \bigr) N_r(t-a) \bigl[ 2 - R \bigl( P(t-a) \bigr) \bigr]  da 
\enspace ,
\end{equation}
for $\Delta\Omega = \Omega_{1} - \Omega_{0}$. 
\subsection{The population/market model}

Our model then consists in coupling equations \eqref{eq.mature.females}, \eqref{eq.price} and \eqref{eq.modelB.supply}:
\begin{subequations}
\begin{align}
\label{eq.modelB.1}
N_r(t) &= \int_{A_0}^{A_1}   \rhoseason(t-a) m \bigl( N_r(t-a) \bigr) N_r(t-a) R \bigl( P(t-a) \bigr) da
\\
\label{eq.modelB.2}
\frac{P^{\prime}(t)}{P(t)} &= \lambda F \Bigl( D \bigl( P(t) \bigr),S(t) \Bigr)
\\
\label{eq.modelB.3}
S(t) &= \frac{1}{\Delta\Omega} \int_{\Omega_0}^{\Omega_1}  \rhoseason(t-a) 
m \bigl( N_r(t-a) \bigr) N_r(t-a) \bigl[ 2 - R \bigl( P(t-a) \bigr) \bigr] da  
\end{align}
\end{subequations}
where $F(D,S)=(D-S)/(D+S)$, and $m: [0,+\infty) \to [0,+\infty)$, $\rhoseason: \R \to [0,+\infty)$, 
$R: [0,+\infty) \to [0,1]$, $D: [0,+\infty) \to [0,+\infty)$, $A_1>A_0>0$, $ \Omega_{1}>\Omega_{0}>0$, $\Delta\Omega=\Omega_{1}-\Omega_{0}$ and $\lambda>0$ have to be chosen. 
Possible choices for all these parameters are discussed in Section~\ref{sec.values-param}.

\section{Analysis of the model}
\label{sec.math}
The section analyses mathematically the model defined by equations~\eqref{eq.modelB.1}--\eqref{eq.modelB.3}, under the following assumptions:
\begin{itemize}
\item $\rhoseason:\R\to\R$ is a non-negative, bounded, $1$-periodic function such that $\int_0^1\rhoseason=1$ and we
let $\rhoseason(t)\le\rhomax$ and
\begin{equation}\label{eq.emmero}
0<c_0\le\int_{A_0}^{A_1}\rhoseason(t-a)da\le c_1 \qquad \forall t;
\end{equation}
\item $m:\left[0,+\infty\right)\to\R$ is a continuous function that satisfies
\[
\frac{m_0}{2} \min\{1, N^{-\gamma}\} \le m(N) \le m_0 \min\{1, N^{-\gamma}\} \qquad \forall N > 0
\]
with $m_0>0$ and $\gamma\ge 1$;
\item $R:\left[0,+\infty\right)\to\R$ is a continuous function such that $R_0\le R(P)\le R_1$ for all $P \geq 0$ and some constants $R_1,R_0>0$
with $R_1 \leq 1$;
\item $D:\left[0,+\infty\right)\to\R$ is a strictly decreasing and locally Lipschitz continuous function such that $D(+\infty)=0$ and we set $D_{0}=D(0)$.
\end{itemize}
We begin by setting up a phase space and a notion of solution suitable for our model.
Let $T_0 = \min\{A_0, \Omega_0\}$, $T_1 = \max\{A_1, \Omega_1\}$ and
$\X = L^{\infty}([-T_1,0];\left[0, +\infty\right)) \times C^0([-T_1,0];\left[0, +\infty\right))$
which is a complete metric space with respect to the distance induced by the norm
\[
\bigl\lVert (N, P) \bigr\rVert_{\X} 
:= \lVert N \rVert_{\infty} + \lVert P \rVert_{\infty} 
:= \esssup_{s \in [-T_1, 0]} \bigl\lvert N(s) \bigr\rvert + \sup_{s \in [-T_1, 0]} \bigl\lvert P(s) \bigr\rvert 
\enspace .
\]
In particular, when we consider $ N \in L^{\infty}([-T_{1},0];\left[0, +\infty\right)) $,
we actually mean that $ \essinf N \ge 0 $.
\begin{definition}\label{def.IVP}
Let $(N_0, P_0) \in \X$, and $t_0, T \in \R$ with $ t_{0} < T $. A solution of \eqref{eq.modelB.1}--\eqref{eq.modelB.3} with initial data $(N_0, P_0)$ is a couple $(N_r,P): \left[t_0 - T_1, T\right) \to \R^2$ such that $N_r|_{\left[t_0,T\right)}$ is continuous, $P|_{\left[t_0,T\right)}$ is differentiable, $N_r,P$ satisfy \eqref{eq.modelB.1}--\eqref{eq.modelB.3} for $t \in \left[t_0, T\right)$, while $N_r(t_0+a)=N_0(a)$ and $P(t_0+a)=P_0(a)$ for $a\in\left[-T_1,0\right)$.
\end{definition}
Our first result shows that a unique solution exists, is globally defined and satisfies some estimates: $ N_{r}$ and $S$ are globally bounded
and the component $ N_{r} $ turns out to be Lipschitz continuous on $ \left[t_{0},+\infty\right) $.
In particular all the obtained estimates are uniform with respect to the initial condition.
\begin{proposition}\label{pro.IVP}
Let $(N_0, P_0) \in \X$ and $t_0 \in \R$ be given.
Then there exists a unique solution pair $(N_r,P):\left[-T_1+t_0,+\infty\right)\to\R^2$ of \eqref{eq.modelB.1}--\eqref{eq.modelB.3} with initial data $(N_0, P_0)$.
Moreover, $N_r, P$ are non-negative and
\[
\begin{aligned}
& N_r(t) \le N_{\max}           \quad \forall t\ge t_0 \\
& \bigl\lvert N_r(t)-N_r(s) \bigr\rvert \le L_1 |t-s| \quad \forall t,s\ge t_0 \\
& 0 \le S(t) \le S_{\max}       \quad \forall t\ge t_0,
\end{aligned}
\]
where:
\begin{equation}\label{eq.NmaxL1Smax}
\begin{aligned}
& N_{\max} := m_0 R_1 c_1, \qquad L_1 := 2 m_0 R_1 \rhomax \\
& S_{\max} := m_0 \frac{2-R_0}{\Delta\Omega} \sup_{s\in[0,1]} \int_{\Omega_0}^{\Omega_1} \rhoseason(s-a)da.
\end{aligned}
\end{equation}
\end{proposition}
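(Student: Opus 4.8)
The plan is to construct the solution by the \emph{method of steps}, exploiting the fact that both delay integrals \eqref{eq.modelB.1} and \eqref{eq.modelB.3} involve only the values of $N_r$ and $P$ at strictly earlier times, since the delays range over $[A_0,A_1]$ and $[\Omega_0,\Omega_1]$ with $A_0,\Omega_0>0$. Set $T_0=\min\{A_0,\Omega_0\}>0$. On any interval $[t_0+(k-1)T_0,\,t_0+kT_0)$ the arguments $t-a$ appearing in \eqref{eq.modelB.1} and \eqref{eq.modelB.3} satisfy $t-a<t_0+(k-1)T_0$, so they fall into the previously determined history $[t_0-T_1,\,t_0+(k-1)T_0)$. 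This decouples the computation: on each step one first evaluates $N_r$ and $S$ directly from the integrals \eqref{eq.modelB.1}, \eqref{eq.modelB.3} (no self-reference at the current time), and only then solves the scalar ODE \eqref{eq.modelB.2} for $P$. The single algebraic fact that drives all the estimates is that $m(N)\le m_0\min\{1,N^{-\gamma}\}$ with $\gamma\ge1$ yields
\[
N\,m(N)\le m_0 \qquad \forall N\ge 0,
\]
so the ``effective natality'' $N_r(t-a)\,m(N_r(t-a))$ is bounded by $m_0$ irrespective of how large the history $N_0$ may be.

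For existence and uniqueness I would argue inductively over the steps. On a given step, $N_r$ and $S$ are obtained by integrating already-known data, so they are uniquely determined; moreover each is Lipschitz in $t$ (see below), hence continuous. With $S$ a known continuous function, \eqref{eq.modelB.2} reads $P'=\lambda P\,F(D(P),S(t))$, whose right-hand side is locally Lipschitz in $P$ because $D$ is locally Lipschitz and $D(P)>0$ for every finite $P$ (as $D$ is strictly decreasing with $D(+\infty)=0$), so that $D(P)+S(t)>0$ and $F$ is smooth there. Picard--Lindel\"of then gives a unique solution on the step, matching the value of $P$ at the left endpoint inherited from the previous step. Since $S\ge0$ and $D(P)\ge0$ force $|F|\le1$, one has $|P'|\le\lambda P$, so $P$ neither blows up nor changes sign in finite time: the multiplicative structure keeps $P\ge0$ (with $P\equiv0$ the solution when $P_0(0)=0$), and Gr\"onwall bounds $P$ on each compact interval. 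Thus no step can fail, and concatenating the steps yields a unique solution defined on all of $[t_0-T_1,+\infty)$.

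It remains to establish the uniform estimates. Non-negativity of $N_r$ is immediate from \eqref{eq.modelB.1}, all factors $\rhoseason,m,N_r,R$ being non-negative (indeed $R\ge R_0>0$). Using $N\,m(N)\le m_0$, $R\le R_1$ and \eqref{eq.emmero} in \eqref{eq.modelB.1} gives
\[
N_r(t)\le m_0R_1\int_{A_0}^{A_1}\rhoseason(t-a)\,da\le m_0R_1c_1=N_{\max},
\]
and the same bound with $2-R\le 2-R_0$ in \eqref{eq.modelB.3}, together with the $1$-periodicity of $\rhoseason$, yields $0\le S(t)\le S_{\max}$. For the Lipschitz estimate I would substitute $u=t-a$ and write $N_r(t)=G(t-A_0)-G(t-A_1)$, where $G'=g$ and $g(\tau)=\rhoseason(\tau)\,m(N_r(\tau))\,N_r(\tau)\,R(P(\tau))$ satisfies $\lVert g\rVert_\infty\le \rhomax\, m_0R_1$. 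Then for $t,s\ge t_0$,
\[
N_r(t)-N_r(s)=\int_{s-A_0}^{t-A_0}g-\int_{s-A_1}^{t-A_1}g,
\]
a difference of two integrals over intervals of length $|t-s|$, whence $|N_r(t)-N_r(s)|\le 2\rhomax\, m_0R_1|t-s|=L_1|t-s|$; the identical computation applied to \eqref{eq.modelB.3} gives the Lipschitz continuity (hence continuity) of $S$ used above.

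The main obstacle is less any single estimate than making sure they all hold \emph{uniformly in the initial datum}, which is possible only because the birth term is controlled through $N\,m(N)\le m_0$ rather than through any bound on $N_0$ itself; the subordinate technical point is the global solvability of the price ODE, secured by the boundedness $|F|\le1$ and the strict positivity of $D$ at finite prices.
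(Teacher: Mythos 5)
Your proof is correct and takes essentially the same route as the paper's: the method of steps with step size $T_0=\min\{A_0,\Omega_0\}$ (so that $N_r$ and $S$ on each step are determined by already-known history, after which the price ODE is solved), the key bound $N\,m(N)\le m_0$ giving estimates uniform in the initial datum, global solvability of \eqref{eq.modelB.2} from $|F|\le 1$, and the Lipschitz bound obtained by writing $N_r(t)-N_r(s)$ as a difference of two integrals over intervals of length $|t-s|$. Your only additions are minor details the paper leaves implicit (Picard--Lindel\"of, and the observation that $D(P)>0$ keeps the denominator $D+S$ away from zero).
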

\begin{proof}
We set $N_r(t)=N_0(t-t_0)$ and $P(t)=P_0(t-t_0)$ for $t\in\left[t_0-T_1,t_0\right)$ and remark that the equations \eqref{eq.modelB.1}--\eqref{eq.modelB.3} allow to extend $N_r$ and $S$ on $\left[t_0-T_1,t_0+T_0\right)$ in a unique and continuous way.
In particular we have that $N_r,S\ge 0$ and $N_r(t)\le m_0c_1R_1$ for $t\in\left[t_0,t_0+T_0\right)$ and $S(t)\le S_{\max}$ for $t\in\left[t_0,t_0+T_0\right)$ since $Nm(N)\le m_0$ for all $N$.
Then \eqref{eq.modelB.2} can be uniquely solved in $\left[t_0,t_0+T_0\right)$ with respect to $P$ with $S$ and $P(t_0)$ given.
Indeed, no blow-up can occur at or before $t_0+T_0$ since $F(D,S) \leq 1$ for all $D>0$ and $S \geq 0$.
An inductive argument shows that the same properties hold true on the interval $\left[t_0+(k-1)T_0,t_0+kT_0\right)$ for all $k\in\N$.

Now, let us fix any $s,t\ge t_0$ such that $s \le t$ and compute
\begin{align*}
\bigl\lvert N_r(t)-N_r(s) \bigr\rvert  
= & \left| \int_{t-A_1}^{t-A_0} N_r(\alpha) m\bigl( N_r(\alpha) \bigr) \rhoseason(\alpha) R \bigl( P(\alpha) \bigr) d\alpha \right.
\\
& \left. - \int_{s-A_1}^{s-A_0} N_r(\alpha) m\bigl( N_r(\alpha) \bigr) \rhoseason(\alpha) R \bigl( P(\alpha) \bigr) d\alpha \right|
\\
= & \left| \int_{s-A_0}^{t-A_0} N_r(\alpha) m\bigl( N_r(\alpha) \bigr) \rhoseason(\alpha) R \bigl( P(\alpha) \bigr) d\alpha \right.
\\
& \left. - \int_{s-A_1}^{t-A_1} N_r(\alpha) m\bigl( N_r(\alpha) \bigr) \rhoseason(\alpha) R \bigl( P(\alpha) \bigr) d\alpha \right|
\\
\le & m_0 R_1 \left( \int_{s-A_1}^{t-A_1} \rhoseason(\alpha) d\alpha + \int_{s-A_0}^{t-A_0} \rhoseason(\alpha) d\alpha \right)
\\
\le & 2m_0 R_1 \rhomax |t-s|. \qedhere
\end{align*}
\end{proof}
\begin{remark}\label{rem.discontinuity.at.t0}
In fact, equation \eqref{eq.modelB.1} prescribes the value $N_r(t_0)$ which may be different from $N_0(t_0^-)$.
Hence the solution component $N_r$ may have a jump discontinuity at $t_0$ even if $N_0$ is continuous.
However, $N_r$ is bounded and Lipschitz continuous on $\left[t_0,+\infty\right)$ with constants that do not depend on initial data.
On the other hand, it is clear from the proof of Proposition \ref{pro.IVP} that $P(t) > 0$ for all $t > t_0$ if and only if $P(t_0)>0$.
In particular, if $ P_{0} $ is not identically zero but satisfies $ P_{0}(t_{0}) = 0 $, then we have $P(t) = 0$ for all $t \ge t_0$,
no matter what is $N_0$.
\end{remark}
The next results show that, under suitable assumptions, all the components of the solution eventually are uniformly bounded
away from zero.
In particular, the conditions in statements $(2)$ and $(3)$ of Proposition~\ref{pro.uniform.persistence.for.N} 
require that the breeding strategy has to be suitably tuned to the maximal
fertility rate.
Moreover, the obtained estimates will allow to define a compact invariant set which absorbs in finite time all the relevant solutions.
\begin{proposition}\label{pro.uniform.persistence.for.N}
Let $(N_r, P)$ be the solution of \eqref{eq.modelB.1}--\eqref{eq.modelB.3} with initial data $(N_0, P_0) \in \X$ at time $t_0$.
\begin{enumerate}
\item
If $N_r(t) \le N_{\max}$ for a.a. $t \in [\hat{t} - A_1, \hat{t}]$ for some $\hat{t}  \geq t_0$, then
\[
N_r(t) \ge \frac{m_0R_0c_0}{2} \min\left\{ \essinf_{[\hat{t}-A_1,\hat{t}]} N_r, N_{\max}^{1-\gamma}\right\}
\quad \forall t \in \bigl[ \hat{t}, \hat{t}+A_0 \bigr] 
\enspace ,
\]
In particular this inequality holds for all $t \ge t_0 + A_1$ by Proposition~\ref{pro.IVP}.
\item
If $m_0R_0c_0>2$ and $\inf_{[\hat{t}-A_1,\hat{t}]} N_r \ge N_{\max}^{1-\gamma}$ for some $\hat{t} \ge t_0 + A_1$, then
\[
N_r(t) \ge N_{\min} \quad \text{and} \quad S(t) \ge S_{\min} \qquad \forall t\ge \hat{t},
\]
where:
\begin{equation}\label{eq.NminSmin}
 N_{\min} := \frac{m_0R_0c_0}{2} N_{\max}^{1-\gamma} \quad \text{and} \quad
S_{\min} := m_0 \frac{2-R_1}{2\Delta\Omega} N_{\max}^{1-\gamma} \inf_{s\in[0,1]} \int_{\Omega_0}^{\Omega_1}\rhoseason(s-a)da.
\end{equation}
\item
If $m_0R_0c_0>2$ and $ N_{0}(a) >0 $ for almost all $ a \in [-A_{1},0] $, then there exists $t^*\ge t_0$ such that
$N_r(t) \ge N_{\min}$ and $S(t) \ge S_{\min}$ for all $t\ge t^*$.
\end{enumerate}
\end{proposition}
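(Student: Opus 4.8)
To prove item (3), I will freely use items (1) and (2) of the same proposition together with Proposition~\ref{pro.IVP}. The plan is to reduce everything to item (2): it suffices to produce some $\hat t\ge t_0+A_1$ with $\inf_{[\hat t-A_1,\hat t]}N_r\ge N_{\max}^{1-\gamma}$, since item (2) then delivers $t^*=\hat t$. Because Proposition~\ref{pro.IVP} guarantees $N_r\le N_{\max}$ on $[t_0,+\infty)$, item (1) is available at every window $[\tau-A_1,\tau]$ with $\tau\ge t_0+A_1$, and it is the mechanism that pushes $N_r$ upward: writing $k:=m_0R_0c_0/2$, which exceeds $1$ precisely by the hypothesis $m_0R_0c_0>2$, item (1) reads $N_r(t)\ge k\min\{\essinf_{[\tau-A_1,\tau]}N_r,\,N_{\max}^{1-\gamma}\}$ for $t\in[\tau,\tau+A_0]$.

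First I would secure a strictly positive lower bound on one full window. I claim $N_r(t)>0$ for every $t>t_0$. For $t\in(t_0,t_0+A_0]$ the right-hand side of \eqref{eq.modelB.1} integrates $\rhoseason(t-a)\,m(N_r(t-a))N_r(t-a)\,R(P(t-a))$ over $a\in[A_0,A_1]$, where $t-a\in(t_0-A_1,t_0]$, so $N_r(t-a)=N_0(t-a-t_0)>0$ for a.a.\ $a$; since $R\ge R_0>0$, $m>0$ on $(0,+\infty)$, and $\{a:\rhoseason(t-a)>0\}$ has positive measure (as $\int_{A_0}^{A_1}\rhoseason(t-a)\,da\ge c_0>0$), the integrand is positive on a set of positive measure, whence $N_r(t)>0$. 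An induction over successive intervals of length $A_0$ extends this to all $t>t_0$. As $N_r$ is continuous on $[t_0,+\infty)$ by Proposition~\ref{pro.IVP}, it attains a strictly positive minimum $\beta_0$ on the compact window $[t_0+A_1,t_0+2A_1]\subset(t_0,+\infty)$.

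The core is a geometric bootstrap built on one modular macro-step: if $N_r\ge\beta$ on a window $[\sigma-A_1,\sigma]$ with $\sigma\ge t_0+A_1$, then $N_r\ge\min\{k\beta,N_{\max}^{1-\gamma}\}$ on all of $[\sigma,+\infty)$. This follows by induction over the intervals $[\sigma+nA_0,\sigma+(n+1)A_0]$: the window $[\sigma+nA_0-A_1,\sigma+nA_0]$ decomposes into a part inside the original window (where $N_r\ge\beta$) and a part already handled (where $N_r\ge k\beta\ge\beta$ since $k>1$), so its infimum is $\ge\beta$, and item (1) upgrades $N_r$ to $\ge k\beta$ on the next interval, the truncation at $N_{\max}^{1-\gamma}$ absorbing the case $k\beta>N_{\max}^{1-\gamma}$. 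Applying this first at $\sigma_0=t_0+2A_1$ with value $\beta_0$, and then restarting it at $\sigma_n=\sigma_0+nA_1$ on the window $[\sigma_{n-1},\sigma_n]$, produces bounds $\beta_n=\min\{k^n\beta_0,N_{\max}^{1-\gamma}\}$ valid on $[\sigma_{n-1},+\infty)$. Since $k>1$ forces $k^n\beta_0\ge N_{\max}^{1-\gamma}$ for $n$ large, there is $n$ with $\beta_n=N_{\max}^{1-\gamma}$; then $N_r\ge N_{\max}^{1-\gamma}$ on $[\sigma_{n-1},+\infty)$, so in particular $\inf_{[\sigma_n-A_1,\sigma_n]}N_r\ge N_{\max}^{1-\gamma}$ with $\sigma_n\ge t_0+A_1$, and item (2) concludes with $t^*=\sigma_n$.

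The main obstacle is the bookkeeping of overlapping windows in the bootstrap: item (1) sharpens $N_r$ only on intervals of length $A_0<A_1$, so a naive sliding-window estimate does not at once upgrade the infimum over a full $A_1$-window. Isolating the macro-step above, which carries an improved bound forward for \emph{all} later times once a single good window is known, is what makes the iteration clean. The only other delicate point is the passage from a.e.-positivity of $N_0$ to a uniform positive bound on a window, which is exactly why the continuity of $N_r$ furnished by Proposition~\ref{pro.IVP} is indispensable.
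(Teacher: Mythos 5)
Your proof is correct, and it runs on the same engine as the paper's: repeated application of Statement (1) across overlapping length-$A_1$ windows, with the growth factor $k=m_0R_0c_0/2>1$ supplied by the hypothesis $m_0R_0c_0>2$, followed by an appeal to Statement (2). The difference is the logical packaging. The paper argues by contradiction: it assumes $0<\inf_{[t-A_1,t]}N_r<N_{\max}^{1-\gamma}$ for \emph{all} $t\ge t_0+A_1$, which makes the $\min$ in Statement (1) collapse to the window infimum, so each sweep of roughly $A_1/A_0$ applications multiplies $\inf_{[t_0+\ell A_1,t_0+(\ell+1)A_1]}N_r$ by $k$; the resulting geometric blow-up contradicts the bound $N_r\le N_{\max}$ of Proposition~\ref{pro.IVP}. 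You instead carry the truncated bound $\beta_n=\min\{k^n\beta_0,N_{\max}^{1-\gamma}\}$ forward directly and show it saturates at $N_{\max}^{1-\gamma}$ after finitely many windows. What your version buys is constructiveness: it produces an explicit absorption time $t^*=t_0+(2+n)A_1$ with $n$ of order $\log\bigl(N_{\max}^{1-\gamma}/\beta_0\bigr)/\log k$, whereas the contradiction argument yields no quantitative information (and your formula makes transparent that no bound uniform in the initial data is possible, since $\beta_0$ can be arbitrarily small). The price is the extra bookkeeping of your macro-step; note that its justification as written (``a part already handled where $N_r\ge k\beta\ge\beta$'') is slightly imprecise, since the bound you actually propagate is $\min\{k\beta,N_{\max}^{1-\gamma}\}$, which dominates $\beta$ only when $\beta\le N_{\max}^{1-\gamma}$ --- but this is harmless, because if $\beta>N_{\max}^{1-\gamma}$ the window already satisfies the hypothesis of Statement (2) and there is nothing left to prove. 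Both proofs draw the same two inputs from Proposition~\ref{pro.IVP}: the bound $N_r\le N_{\max}$ (needed to invoke Statement (1) at every window, and, in the paper's version, also serving as the contradiction target) and the Lipschitz continuity of $N_r$ on $\left[t_0,+\infty\right)$, which together with the pointwise positivity obtained from \eqref{eq.emmero} and \eqref{eq.modelB.1} yields the strictly positive starting infimum on a compact window.
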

\begin{proof}
(1) If $N_r(t) \le N_{\max}$ for a.a. $t \in [\hat{t} - A_1, \hat{t}]$ then
\begin{equation}\label{eq.NmN}
N_r(t) m(N_r(t)) \ge
\frac{m_0}{2} \min \left\{ \essinf_{[\hat{t}-T_1,\hat{t}]} N_r, N_{\max}^{1-\gamma} \right\}
\qquad \text{for a.a. } t \in [\hat{t} - A_1, \hat{t}]
\end{equation}
by our assumption on $m$,
and Statement 1 follows immediately from \eqref{eq.modelB.1}.

(2) If, moreover, $m_0 R_0 c_0 > 2$ and $\inf_{[\hat{t}-A_1,\hat{t}]} N_r \ge N_{\max}^{1-\gamma}$ for some $\hat{t} \ge t_0 + A_1$, then by Statement 1 we deduce that
\[
N_r(t) \ge \frac{m_0R_0c_0}{2} N_{\max}^{1-\gamma} = N_{\min} \ge N_{\max}^{1-\gamma} \qquad \forall t \in [\hat{t}, \hat{t}+A_0]
\]
and, by induction, we obtain that $N_r(t) \ge N_{\min}$ for all $t\ge \hat{t}$.
The inequality for $S$ just follows from \eqref{eq.modelB.3}, \eqref{eq.NmN} and the inequality just proved for $N_r$ and Statement 2 is proved.

(3)
First note that, even if $\essinf N_0 =0$, we have that $N_r(t) > 0$ for all $ t \in [t_{0},t_{0}+A_{0}] $ by \eqref{eq.emmero} and
\eqref{eq.modelB.1}.
An iteration of this argument shows that $N_r(t) > 0$ for all $ t \ge t_{0} $.

By Statement~2, if some $\widetilde{t} \geq t_0+A_1$ exists such that $\inf_{[\widetilde{t}-A_{1},\widetilde{t}]} N_r \geq N_{\max}^{1-\gamma}$,
then Statement~3 holds true with $t^* = \widetilde{t}$.
Let us assume this does not happen and show this implies a contradiction, which will end the proof of Statement~3.
In other words, we now assume that
\begin{equation}
\label{eq.pr.unif-persist-N.stat3}
\forall t \geq t_{0}+A_{1} \, ,  \qquad 0 < \inf_{[t-A_{1},t]} N_r <  N_{\max}^{ 1 - \gamma } \enspace .
\end{equation}
In particular, by Statement~1 with $\hat{t} = t_{0}+A_{1}$,
\[
\forall t \in [t_{0} + A_1, t_{0} + A_1 + A_0] \, , \quad N_r(t) \ge \frac{m_0R_0c_0}{2}\inf_{[t_{0}, t_{0} + A_1]} N_r >
\inf_{[t_{0}, t_{0} + A_1]} N_r
\]
and applying the same reasoning $k \geq 1$ times (since equation~\eqref{eq.pr.unif-persist-N.stat3} is assumed to hold for every $t \geq t_1$), we get that
\[
\forall t \in [t_{0} + A_1 + k A_0, t_{0} + A_1 + (k+1) A_0] \, , \quad 
N_r(t) \ge \frac{m_0R_0c_0}{2}\inf_{[t_{0}, t_{0} + A_1]} N_r 
\enspace . 
\]
Taking $k \geq A_1/A_0$ we get
\[
\inf_{[t_{0} + A_1, t_{0}+2 A_1]} N_r \geq \frac{m_0R_0c_0}{2}\inf_{[t_{0}, t_{0} + A_1]} N_r \enspace .
\]
Since equation~\eqref{eq.pr.unif-persist-N.stat3}  is assumed to hold for every $ t \geq t_{0}+A_{1} $, we can continue applying
similar estimates and show that for every $\ell \geq 1$,
\[
\inf_{[t_{0} + \ell A_1, t_{0}+ (\ell+1) A_1]} N_r \geq \paren{ \frac{m_0R_0c_0}{2} }^{\ell} \inf_{[t_{0}, t_{0} + A_1]} N_r \enspace ,
\]
which implies that the left-hand side tends to infinity as $\ell$ tends to infinity since $\frac{m_0R_0c_0}{2}>1$ and
$\inf_{[t_{0}, t_{0} + A_1]} N_r>0\,$.
This is in contradiction with the boundedness of $ N_{r} $ (see Proposition~\ref{pro.IVP}).
\end{proof}
\begin{proposition}\label{pro.price.dynamics}
Let $(N_r, P)$ be a solution of \eqref{eq.modelB.1}--\eqref{eq.modelB.3} and assume that some $t^* \geq t_0$ exists such that $0 < S_{\min} \le S(t) \le S_{\max}$ for all $t \ge t^*$, 
where we recall that $S_{\min}$ is defined in Proposition~\ref{pro.uniform.persistence.for.N} 
and $S_{\max}$ is defined in Proposition~\ref{pro.IVP}.
\begin{enumerate}
\item Let $P^* \geq 0$ be such that $D(P^*) < S_{\min}$. 
If $P(\hat{t}) > P^*$ for some $\hat{t} \ge t^*$, then we have
\[
P(t) < P^* \qquad \forall t > \hat{t} + \dfrac{P(\hat{t}) - P^*}{\lambda P^*} \cdot \dfrac{D_{0} + S_{\max}}{S_{\min} - D(P^*)}\,.
\]
\item Let $P_* \geq 0$ be such that $D(P_*) > S_{\max}$. 
If $0 < P(\hat{t}) < P_*$ for some $\hat{t} \ge t^*$, then we have
\[
P(t) > P_* \qquad \forall t > \hat{t} + \dfrac{P_* - P(\hat{t})}{\lambda P(\hat{t})} \cdot \dfrac{D_{0} + S_{\max}}{D(P_*) - S_{\max}}\,.
\]
\end{enumerate}
\end{proposition}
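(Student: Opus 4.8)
The two statements are mirror images of one another, so I would prove $(1)$ in detail and obtain $(2)$ by the symmetric argument, exchanging the roles of the upper and lower thresholds and reversing inequalities; throughout I take $P^* > 0$ (if $P^* = 0$ the asserted threshold time is $+\infty$ and there is nothing to prove). The starting point is to control the sign and the size of $P'$ on the region where $P \ge P^*$. Since $D$ is strictly decreasing and $S(t) \ge S_{\min}$ for $t \ge t^*$, whenever $P(t) \ge P^*$ we have $D(P(t)) \le D(P^*) < S_{\min} \le S(t)$, so the numerator of $F$ satisfies $D(P(t)) - S(t) \le D(P^*) - S_{\min} < 0$, while the denominator obeys $0 < D(P(t)) + S(t) \le D_0 + S_{\max}$ (using $D(P(t)) \le D(0) = D_0$ and $S(t) \le S_{\max}$). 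In particular $P$ is strictly decreasing wherever it lies at or above $P^*$.

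Combining these two bounds in \eqref{eq.modelB.2} gives, for every $t \ge t^*$ with $P(t) \ge P^*$,
\[
\frac{P'(t)}{P(t)} \;\le\; \lambda\,\frac{D(P^*) - S_{\min}}{D_0 + S_{\max}} \;=\; -\,c, \qquad c := \lambda\,\frac{S_{\min} - D(P^*)}{D_0 + S_{\max}} > 0 ,
\]
and hence $P'(t) \le -c\,P(t) \le -c\,P^*$ there. Integrating this differential inequality from $\hat{t}$, as long as $P$ stays at or above $P^*$ on $[\hat{t},t]$, yields the linear upper bound $P(t) \le P(\hat{t}) - c\,P^*(t-\hat{t})$. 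The right-hand side drops to $P^*$ exactly at $t - \hat{t} = (P(\hat{t}) - P^*)/(c\,P^*)$, and unwinding the definition of $c$ shows this equals the time $\frac{P(\hat{t}) - P^*}{\lambda P^*}\cdot\frac{D_0 + S_{\max}}{S_{\min} - D(P^*)}$ appearing in the statement. Thus $P$ must reach the level $P^*$ no later than that time.

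It remains to see that once $P$ has reached $P^*$ it stays strictly below it, which is the one point needing care. I would argue by a trapping (invariance) argument: if $P(t_1) \le P^*$ for some $t_1 \ge t^*$, then $P(t) < P^*$ for all $t > t_1$. Indeed, were this to fail, continuity would produce a first time $t_2 > t_1$ with $P(t_2) = P^*$ and $P(t) < P^*$ just before; but the bound above applies at $t_2$ (since $P(t_2) = P^* \ge P^*$) and forces $P'(t_2) \le -c\,P^* < 0$, whereas $P$ differentiable with $P(t) < P(t_2)$ for $t$ slightly less than $t_2$ forces $P'(t_2) \ge 0$, a contradiction. Feeding the crossing time from the previous paragraph into this invariance statement gives $P(t) < P^*$ for all $t$ strictly larger than the stated bound. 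Statement $(2)$ follows identically: on $\{0 < P \le P_*\}$ one gets $D(P(t)) - S(t) \ge D(P_*) - S_{\max} > 0$ and $P'(t)/P(t) \ge c' := \lambda(D(P_*) - S_{\max})/(D_0 + S_{\max}) > 0$, whence $P'(t) \ge c' P(\hat{t})$ while $P \le P_*$, the linear lower bound $P(t) \ge P(\hat{t}) + c' P(\hat{t})(t-\hat{t})$ forces $P$ up through $P_*$ by the stated time, and the mirror trapping argument (now using $P'(t_2) > 0$ at any would-be down-crossing) keeps it above. The main obstacle is thus not the elementary estimates but making the trapping step rigorous, namely ruling out re-crossings by exploiting the strict sign of $P'$ exactly on the threshold.
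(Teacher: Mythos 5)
Your proof is correct and follows essentially the same route as the paper's: bound $P'$ by a strictly negative constant on the region $\{P \ge P^*\}$ (using $D(P(t)) - S(t) \le D(P^*) - S_{\min} < 0$ and $D(P(t)) + S(t) \le D_0 + S_{\max}$), integrate to force a crossing by the stated time, and then invoke the sign of $P'$ at the threshold to prevent re-crossing. The only difference is that you spell out the trapping step via a first-crossing contradiction, whereas the paper compresses it into the remark that $P(t) = P^*$ implies $P'(t) \le 0$ (in fact strictly negative, which is what makes the invariance rigorous in both write-ups).
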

\begin{proof}
(1) As long as $P(t) \ge P^*$, $D(P) \leq D(P^*) < S_{\min}$ so that by equation~\eqref{eq.modelB.2} $P$ decreases,
\[
P^{\prime}(t) \le \lambda P^* \frac{D(P^*) - S_{\min}}{D_{0} + S_{\max}}
\]
and, thus,
\[
P(t) \le - \lambda P^* \frac{S_{\min} - D(P^*)}{D_{0} + S_{\max}} (t - \hat{t}) + P(\hat{t}) \enspace .
\]
Therefore, $P(t)$ reaches the level $P^*$ before the time
\[
\hat{t} + \dfrac{P(\hat{t}) - P^*}{\lambda P^*} \cdot \dfrac{D_{0} + S_{\max}}{S_{\min} - D(P^*)} \,,
\]
afterwards $P(t)$ remains below $P^*$ since $P(t) = P^*$ implies $P^{\prime}(t) \leq 0$ by equation~\eqref{eq.modelB.2}, which proves Statement~1.

(2) Statement~2 follows in a similar way once we observe that, as long as $P(t) \le P_*$, $S_{\max} > D(P_*) \geq D(P)$ so that by equation~\eqref{eq.modelB.2} $P$ increases and
\[
P^{\prime}(t) \ge \lambda P(\hat{t}) \frac{D(P_*) - S_{\max}}{D_{0} + S_{\max}} \enspace .
\qedhere
\]
\end{proof}
\begin{corollary}\label{cor.price.dynamics}
Assume that $m_0R_0c_0>2$, $D_{0}>S_{\max}$ and $S_{\min} > 0$ and let $C>1$ and $P_{\min}, P_{\max}$ be such that $[P_{\min}, P_{\max}] = D^{-1}([S_{\min}, S_{\max}])$.
\begin{enumerate}
\item If $ S(t) \ge S_{\min} $ for all $t \ge t^*$ and  $P(\hat{t}) \in [P_{\min} / C, C P_{\max}])$ for some $\hat{t} \ge t^*$, then $P(t) \in [P_{\min} / C, C P_{\max}])$ for all $t \ge \hat{t}$.
\item In any case, for every non trivial solution $(N_r, P)$ of \eqref{eq.modelB.1}--\eqref{eq.modelB.3} there exists $\hat{t}$ such that $P(t) \in [P_{\min} / C, C P_{\max}])$ for all $t \ge \hat{t}$.
\item Moreover, as long as $P(t)$ stays in $[P_{\min} / C, C P_{\max}])$, we have that $|P^{\prime}(t)| \le C \lambda P_{\max}$.
\end{enumerate}
\end{corollary}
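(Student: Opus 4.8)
The plan is to read off the boundary behaviour of $P$ from the strict monotonicity of $D$ and then string together Propositions~\ref{pro.IVP}, \ref{pro.uniform.persistence.for.N} and \ref{pro.price.dynamics}. First I would record the structural facts that make the statement meaningful. Since $D$ is strictly decreasing with $D(0)=D_0>S_{\max}\ge S_{\min}>0$ and $D(+\infty)=0$, the endpoints are $P_{\min}=D^{-1}(S_{\max})$ and $P_{\max}=D^{-1}(S_{\min})$; both are finite and strictly positive, with $0<P_{\min}\le P_{\max}$. The two inequalities that drive the whole argument are $D(CP_{\max})<D(P_{\max})=S_{\min}$ and $D(P_{\min}/C)>D(P_{\min})=S_{\max}$, which hold because $C>1$ and $D$ is strictly decreasing.

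For Statement~(1) I would prove forward invariance of $[P_{\min}/C,CP_{\max}]$ by a sign-of-the-derivative argument at each endpoint. Recall that $0\le S(t)\le S_{\max}$ for all $t\ge t_0$ already holds by Proposition~\ref{pro.IVP}, so together with the hypothesis $S(t)\ge S_{\min}$ we are in the regime $S_{\min}\le S(t)\le S_{\max}$ used in Proposition~\ref{pro.price.dynamics}. At the upper endpoint $P^*=CP_{\max}$ we have $D(P^*)<S_{\min}\le S(t)$, so whenever $P(t)=CP_{\max}$ equation~\eqref{eq.modelB.2} forces $P'(t)<0$; thus $P$ cannot cross $CP_{\max}$ upwards, exactly as in the final part of the proof of Proposition~\ref{pro.price.dynamics}(1). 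Symmetrically, at the lower endpoint $P_*=P_{\min}/C>0$ we have $D(P_*)>S_{\max}\ge S(t)$, so $P(t)=P_{\min}/C$ forces $P'(t)>0$, and $P$ cannot cross $P_{\min}/C$ downwards. A first-crossing-time argument (take the last time the relevant endpoint is attained before a hypothetical exit and contradict the strict sign of $P'$ there) then yields the invariance.

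For Statement~(2) I would take a nontrivial solution to mean $P(t_0)>0$ (equivalently $P\not\equiv0$, by Remark~\ref{rem.discontinuity.at.t0}) together with $N_0(a)>0$ for a.a.\ $a$; then $P(t)>0$ for all $t>t_0$, and Proposition~\ref{pro.uniform.persistence.for.N}(3) (which needs $m_0R_0c_0>2$ and $S_{\min}>0$) supplies $t^*$ with $S(t)\ge S_{\min}$ for all $t\ge t^*$. Applying Proposition~\ref{pro.price.dynamics}(1) with $P^*=CP_{\max}$ shows that if $P$ ever exceeds $CP_{\max}$ it returns below it in finite time, while Proposition~\ref{pro.price.dynamics}(2) with $P_*=P_{\min}/C$ shows that if $P$ ever drops below $P_{\min}/C$ it climbs back above it in finite time; combined with the invariance from Statement~(1), $P$ enters $[P_{\min}/C,CP_{\max}]$ after finite time and stays there. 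Statement~(3) is then immediate: since $D,S\ge0$ (and $D(P)>0$), one has $\lvert F(D,S)\rvert=\lvert D-S\rvert/(D+S)\le 1$, so $\lvert P'(t)\rvert=\lambda P(t)\lvert F\rvert\le\lambda P(t)\le\lambda CP_{\max}$ as long as $P(t)\le CP_{\max}$.

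The computations are light, so there is no genuine analytic obstacle; the difficulty is purely one of careful bookkeeping. I would be most careful about two points: first, justifying that the bound $S(t)\le S_{\max}$ is available for free from Proposition~\ref{pro.IVP}, so that Proposition~\ref{pro.price.dynamics} can be invoked even though Statement~(1) postulates only the lower bound $S(t)\ge S_{\min}$; and second, making the first-crossing-time argument for invariance fully rigorous, using the strict sign of $P'$ at each endpoint to exclude the existence of a first time at which $P$ leaves the interval.
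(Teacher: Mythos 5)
Your proposal is correct and follows essentially the same route as the paper's proof: Statement (1) by the sign of $P'$ forced by equation~\eqref{eq.modelB.2} at the edges of $[P_{\min}/C, CP_{\max}]$ (using $S \le S_{\max}$ from Proposition~\ref{pro.IVP}), Statement (2) by chaining Proposition~\ref{pro.IVP}, Statement~3 of Proposition~\ref{pro.uniform.persistence.for.N}, Proposition~\ref{pro.price.dynamics} with $(P_*,P^*)=(P_{\min}/C, CP_{\max})$ and the invariance of Statement (1), and Statement (3) from $\lvert F \rvert \le 1$. Your version merely spells out details the paper leaves implicit (the explicit inequalities $D(CP_{\max})<S_{\min}$, $D(P_{\min}/C)>S_{\max}$ and the first-crossing-time argument).
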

\begin{proof}
Remark that the assumption $S_{\min}>0$, which is equivalent to requiring that
\[
R_1 < 2 \quad \mbox{and} \quad \inf_{t \in [0, 1]} \int_{\Omega_0}^{\Omega_1} \rhoseason(t-a) da > 0,
\]
ensures that $D^{-1}([S_{\min}, S_{\max}])$ is a compact interval.

(1) The first statement directly follows from equation~\eqref{eq.modelB.2} since $F(D(P),S) < 0$ for all $S \in [S_{\min},S_{\max}]$ and
$P > P_{\max}$, and $F(D(P),S) > 0$ for all $S \in [S_{\min},S_{\max}]$ and $P < P_{\min}$
(recall that $ S(t) \le S_{\max} $ for all $ t \ge t_{0} $ by Proposition~\ref{pro.IVP}).

(2) The second statement is a straightforward consequence of Proposition~\ref{pro.IVP}, Statement~3 of Proposition \ref{pro.uniform.persistence.for.N}, Proposition~\ref{pro.price.dynamics} with $(P_*,P^*)=(P_{\min}/C,C P_{\max})$ and the first statement.

(3) The third statement follows from equation~\eqref{eq.modelB.2} since $\absj{F(D,S)} \leq 1$ for all $D,S$.
\end{proof}
As is usual in delayed equations, we study the dynamical system produced on the space $ \X $ of initial conditions by the solutions of
\eqref{eq.modelB.1}--\eqref{eq.modelB.3}.
In particular, due to the natural periodicity of the seasonality function $ m_{\rho} $, we consider how the initial condition is
transformed after $1$ year.
Namely, let $\Pi : \X \to \X$ be defined by $\Pi(N_0, P_0) = (N_1, P_1)$ with $N_1(s) = N_r(1 + s)$ and $P_1(s) = P(1 + s)$ for $s \in [-T_1, 0]$,
where $(N_r, P)$ is the unique solution of \eqref{eq.modelB.1}--\eqref{eq.modelB.3} with initial data $(N_0, P_0)$ at $t_0=0$.
\begin{proposition}\label{pro.continuity}
$\Pi$ is continuous.
\end{proposition}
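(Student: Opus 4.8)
The plan is to show that $\Pi$ is uniformly continuous on each bounded subset of $\X$, which clearly suffices. Fix $(N_0,P_0)\in\X$ and work inside the ball $B$ of radius $1$ around it; take $(\tilde N_0,\tilde P_0)\in B$ and write $(N_r,P)$, $(\tilde N_r,\tilde P)$ for the two solutions at $t_0=0$ given by Proposition~\ref{pro.IVP}, extended by the data on $[-T_1,0)$. By Proposition~\ref{pro.IVP} both $N_r,\tilde N_r$ take values in $[0,N_{\max}]$, and since $\absj{F}\le 1$ gives $\absj{P'/P}\le\lambda$, every price stays in $[0,P^{\max}]$ on $[-T_1,1]$ with $P^{\max}:=(\norm{P_0}_\infty+1)e^{\lambda}$. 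Thus on the whole time window that determines $\Pi$ all relevant values lie in the fixed compact box $Q:=[0,N_{\max}]\times[0,P^{\max}]$, while $S,\tilde S\in[0,S_{\max}]$. I will bound the output distance by a modulus of the input distance $d:=\norm{(N_0,P_0)-(\tilde N_0,\tilde P_0)}_{\X}$.

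On $Q$ the nonlinearities are well controlled. Since $m$ is continuous, $\Phi(N):=N\,m(N)$ is uniformly continuous on $[0,N_{\max}]$; call $\omega_\Phi$ its modulus and $\omega_R$ that of $R$ on $[0,P^{\max}]$, and recall $0\le\Phi\le m_0$, $0\le R\le 1$. For the price field set $G(P,S):=\lambda F(D(P),S)$; because $D$ is strictly decreasing with $D(+\infty)=0$ we have $D(P)\ge D(P^{\max})>0$, hence $D+S\ge D(P^{\max})>0$ on $Q\times[0,S_{\max}]$, so $G$ is Lipschitz there (composition of the locally Lipschitz $D$ with the smooth $F$, whose denominator is bounded below), with some constant $L_G$, and $\absj{G}\le\lambda$.

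The key step is an induction over the intervals $I_k:=[kT_0,(k+1)T_0]$, $T_0=\min\{A_0,\Omega_0\}$, forced by the fact that the delays forbid a direct global estimate. Write $\Delta(t):=\absj{N_r(t)-\tilde N_r(t)}+\absj{P(t)-\tilde P(t)}$ and $M_k:=\sup_{[-T_1,kT_0]}\Delta$, so that $M_0\le d$ up to the harmless jump of $N_r$ at $0$ (itself controlled by $\omega_\Phi(d)+m_0\omega_R(d)$). For $t\in I_k$ all arguments $t-a$ appearing in \eqref{eq.modelB.1} and \eqref{eq.modelB.3} lie in $[t-T_1,t-T_0]\subseteq[-T_1,kT_0]$, so subtracting the two integral identities and using \eqref{eq.emmero}, the bounds on $\Phi,R$ and the moduli $\omega_\Phi,\omega_R$ yields the instantaneous estimates $\absj{N_r(t)-\tilde N_r(t)}\le c_1[\omega_\Phi(M_k)+m_0\omega_R(M_k)]$ and, similarly, $\absj{S(t)-\tilde S(t)}\le\eta_S(M_k)$ with $\eta_S$ a modulus. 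For the price, putting $w=\absj{P-\tilde P}$ and using $P'=P\,G(P,S)$ one obtains $w'\le(\lambda+P^{\max}L_G)\,w+P^{\max}L_G\,\eta_S(M_k)$ a.e. on $I_k$; Gronwall over the length-$T_0$ step then gives $\sup_{I_k}w\le\psi_P(M_k)$ with $\psi_P$ a modulus. Hence $M_{k+1}\le\psi(M_k)$ for a single modulus $\psi$ vanishing at $0$. Iterating $K=\lceil 1/T_0\rceil$ times (so that $KT_0\ge 1$) and restricting to $[1-T_1,1]$, the definition of $\Pi$ gives $\norm{\Pi(N_0,P_0)-\Pi(\tilde N_0,\tilde P_0)}_{\X}\le\sup_{[1-T_1,1]}\Delta\le M_K\le\Theta(d)$, where $\Theta$ is the $K$-fold composition of $\psi$ (pre-composed with the bound for $M_0$) and satisfies $\Theta(r)\to 0$ as $r\to 0^+$; this is the desired local uniform continuity.

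The main obstacle is precisely that $m$ is only continuous, so $\Phi=N\,m(N)$ need not be Lipschitz and one cannot run a single Lipschitz–Gronwall argument; instead one must carry moduli of continuity through the delayed integral terms for $N_r$ and $S$ and combine them with a genuine Gronwall estimate for the ODE-driven component $P$. Two structural points make this go through: restricting to a bounded neighbourhood supplies the uniform price bound $P^{\max}$ (needed because $P_0$ ranges over an unbounded set of continuous functions, and because $F$ must be kept away from its singular set $D+S=0$), and the inequalities $A_0,\Omega_0\ge T_0$ guarantee that on each step $I_k$ the integrals reach back only into already-estimated values, so the induction is well founded and no circular dependence arises.
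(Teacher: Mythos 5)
Your proof is correct and follows essentially the same route as the paper's: both iterate over intervals of length $T_0=\min\{A_0,\Omega_0\}$ so that the delayed integrals only reach back into already-controlled values, both control $N_r$ and $S$ via the (uniform) continuity of $m$ and $R$, and both handle $P$ by continuous dependence for the ODE (the paper cites Hartman's theorem where you run an explicit Gronwall estimate on a compact box). Your write-up simply fills in the details the paper dismisses as straightforward, such as the uniform price bound that keeps the nonlinearities on a compact set and the composition of moduli of continuity across the $\lceil 1/T_0\rceil$ iteration steps.
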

\begin{proof}
Let $(N_r, P), (M_r, Q)$ be the solution couples of \eqref{eq.modelB.1}--\eqref{eq.modelB.3} with initial data $(N_0, P_0), (M_0, Q_0) \in \X$, respectively, at time $t_0=0$ and let $S_{(N_0, P_0)}, S_{(M_0, Q_0)}$ be the corresponding supply functions given by \eqref{eq.modelB.3}.
Thanks to the continuity of $m$ and $R$ it is straightforward to show that $|N_r(t) - M_r(t)|$ and $|S_{(N_0, P_0)}(t) - S_{(M_0, Q_0)}(t)|$ can be made arbitrarily and uniformly small on $[0,T_0]$ provided that $\|(N_0 - M_0, P_0 - Q_0)\|_{\X}$ is small enough.
The same holds also for $|P(t) - Q(t)|$ on $[0, T_0]$ by standard results on the theory of ordinary differential equations (see \cite[Theorem II.3.2, p. 14]{Har:1964}).
It is sufficient to iterate this procedure a finite number $k$ of times such that $kT_0 \ge 1$ to complete the proof.
\end{proof}

Now we look for an attractor for $\Pi$.
The first step is to identify a compact invariant set that absorbs (almost) all orbits of the dynamical system.
Its definition is suggested by the propositions we proved and goes as follows.
\begin{definition} \label{def.K}
Let $\K$ be the set of couples $(N_0, P_0) \in \X$ such that they are Lipschitz continuous with constants $2 m_0 R_1 \rhomax$ and $2 \lambda P_{\max}$, respectively (see Proposition \ref{pro.IVP} and Corollary \ref{cor.price.dynamics}), they satisfy $N_{\min} \le N_0(s) \le N_{\max}$ and $P_{\min} / 2 \le P_0(s) \le 2 P_{\max}$ for almost all $s \in [-T_1, 0]$ and, moreover,
\begin{equation}\label{eq.Nat0}
N_0(0) = \int_{A_0}^{A_1} N_0(-a) m \bigl( N_0(-a) \bigr) \rhoseason(-a) D \bigl( P_0(-a) \bigr) da 
\enspace .
\end{equation}
\end{definition}
We cannot expect the basin of attraction of $\K$ to be the whole $\X$, since the equations \eqref{eq.modelB.1}--\eqref{eq.modelB.3} admit
semi-trivial solutions $(N_r, 0)$ and $(0, P)$ besides the trivial one.
Moreover, even if we provide initial data $(N_0, P_0) \in \X$ such that $ N_0 > 0$ and $P_0 \not \equiv 0$, we have that $P(t) = 0$
for all $t \ge 0$ if $P_0(0)=0$. 
Therefore we consider the following subset of $\X$:
\[
\X^* = \bigl\{ (N_0, P_0) \in \X: N_0(a)>0 \text{ a.e. in } [-A_{1}, 0] \text{ and } P_0(0) > 0 \bigr\} 
\enspace .
\]
We obtain the next result as an immediate consequence of Ascoli-Arzel\`a's Theorem, Propositions \ref{pro.IVP} and \ref{pro.uniform.persistence.for.N}, Corollary \ref{cor.price.dynamics} and the $1$-periodicity of $\rhoseason$.
\begin{proposition}\label{pro.invariant.compact}
Assume that $m_0 R_0 c_0 >2$, $D_{0} > S_{\max}$ and $S_{\min} > 0$.
Then $\K$ is a compact subset of $\X$, $\Pi(\K) \subseteq \K$ and for all $(N_0, P_0) \in \X^*$ there exists $k \in \N$ such that $\Pi^k(N_0,P_0) \in \K$.
\end{proposition}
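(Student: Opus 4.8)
The plan is to dispatch the three assertions in turn, reading every quantitative estimate off the earlier results so that the only real work is organizational. For compactness I would first observe that the defining Lipschitz and two-sided bound conditions force each element of $\K$ to be (the continuous representative of) a member of a uniformly bounded, equi-Lipschitz family on $[-T_1,0]$; Ascoli-Arzel\`a then gives relative compactness in $C^0([-T_1,0])^2$, hence in $\X$. To upgrade this to compactness I would verify that $\K$ is closed: the bounds $N_{\min}\le N_0\le N_{\max}$ and $P_{\min}/2\le P_0\le 2P_{\max}$ and the two Lipschitz constraints are preserved under uniform limits, while the integral constraint \eqref{eq.Nat0} passes to the limit because $m$ and $R$ are continuous and $m,R,\rhoseason$ are bounded, so the integrand converges uniformly along a uniformly convergent sequence.

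For the invariance $\Pi(\K)\subseteq\K$, fix $(N_0,P_0)\in\K$, let $(N_r,P)$ be the corresponding solution at $t_0=0$, and write $(N_1,P_1)=\Pi(N_0,P_0)$. The key preliminary step is the lower bound $N_r(t)\ge N_{\min}$ for all $t\ge 0$. Since $m_0R_0c_0>2$ we have $N_{\min}>N_{\max}^{1-\gamma}$, so the initial window already satisfies $\essinf_{[-A_1,0]}N_r=\essinf_{[-A_1,0]}N_0\ge N_{\min}>N_{\max}^{1-\gamma}$; a sliding application of Statement~(1) of Proposition~\ref{pro.uniform.persistence.for.N} on successive intervals of length $A_0$ then propagates $N_r\ge N_{\min}$ forward for all $t\ge 0$. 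Combined with $N_r\le N_{\max}$ and the Lipschitz bound $2m_0R_1\rhomax$ from Proposition~\ref{pro.IVP}, the estimate \eqref{eq.NmN} and \eqref{eq.modelB.3} give $S_{\min}\le S(t)\le S_{\max}$ for all $t\ge 0$. With the supply pinned between $S_{\min}$ and $S_{\max}$ and $P(0)=P_0(0)\in[P_{\min}/2,2P_{\max}]$, Corollary~\ref{cor.price.dynamics}(1) (with $C=2$) keeps $P(t)\in[P_{\min}/2,2P_{\max}]$ for all $t\ge 0$, and part~(3) gives $|P'|\le 2\lambda P_{\max}$ there. Restricting to the window $[1-T_1,1]$ and relabelling, $N_1$ and $P_1$ inherit every bound and Lipschitz constraint of $\K$; the integral constraint \eqref{eq.Nat0} for $(N_1,P_1)$ is exactly \eqref{eq.modelB.1} evaluated at $t=1$ together with the $1$-periodicity $\rhoseason(1-a)=\rhoseason(-a)$.

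For the absorption statement, take $(N_0,P_0)\in\X^*$. Since $N_0>0$ a.e.\ on $[-A_1,0]$ and $m_0R_0c_0>2$, Statement~(3) of Proposition~\ref{pro.uniform.persistence.for.N} furnishes $t^*$ with $N_r\ge N_{\min}$ and $S\ge S_{\min}$ for $t\ge t^*$; because $P_0(0)>0$ the solution is non-trivial, so Corollary~\ref{cor.price.dynamics}(2) furnishes $\hat t$ with $P\in[P_{\min}/2,2P_{\max}]$ for $t\ge \hat t$, and part~(3) yields the price Lipschitz bound there. Proposition~\ref{pro.IVP} supplies the remaining bounds ($N_r\le N_{\max}$, $S\le S_{\max}$, and the Lipschitz constant for $N_r$) on all of $[t_0,+\infty)$. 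Choosing any integer $k$ with $k-T_1\ge\max\{t^*,\hat t\}$ places the whole window $[k-T_1,k]$ inside the region where all $\K$-constraints hold; the integral constraint again follows from \eqref{eq.modelB.1} and periodicity, so $\Pi^k(N_0,P_0)\in\K$.

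The compactness and the interval bookkeeping are routine. I expect the only \emph{delicate point} to be the propagation of $N_r\ge N_{\min}$ from the $\K$-data across $t=0$ without a gap: one must use the compatibility constraint \eqref{eq.Nat0}, which by Remark~\ref{rem.discontinuity.at.t0} is precisely what prevents a jump of $N_r$ at $t_0$, so that when $T_1>1$ the equi-Lipschitz bound survives the gluing of the old initial segment to the newly computed solution. Everything else is a direct citation of Propositions~\ref{pro.IVP}--\ref{pro.uniform.persistence.for.N}, Corollary~\ref{cor.price.dynamics}, and the $1$-periodicity of $\rhoseason$.
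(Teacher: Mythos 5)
Your proposal is correct and takes essentially the same route as the paper: the paper states this result as an immediate consequence of Ascoli--Arzel\`a's Theorem, Propositions~\ref{pro.IVP} and~\ref{pro.uniform.persistence.for.N}, Corollary~\ref{cor.price.dynamics} and the $1$-periodicity of $\rhoseason$, and your writeup is a faithful expansion of exactly those ingredients. In particular, your two non-obvious observations --- starting the persistence induction at $\hat{t}=0$ (legitimate because $\K$-membership supplies the a.e.\ bounds on the initial window that Statement~(1) of Proposition~\ref{pro.uniform.persistence.for.N} needs) and using the compatibility condition \eqref{eq.Nat0} (read with $R$ in place of $D$, correcting an evident typo) to glue the Lipschitz bound across $t=0$ --- are precisely the points the paper's one-line proof leaves implicit.
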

Using the absorbing compact set $ \K $, we can now prove the existence of an attractor for all initial conditions of $ \X^{*} $.
\begin{theorem}\label{thm.global.attractor}
Assume that
\begin{align}
\label{hyp.thm.global.attractor.1}
&m_0 R_0 c_0 >2 \\
\label{hyp.thm.global.attractor.2}
&D_{0} > S_{\max} \\
\label{hyp.thm.global.attractor.3}
&S_{\min} > 0
\end{align}
with $ S_{\max} $ and $ S_{\min} $ given respectively in \eqref{eq.NmaxL1Smax} and \eqref{eq.NminSmin}, and let
\[
\Lambda = \bigcap_{k \in \N} \Pi^k(\K) \enspace .
\]
Then:
\begin{enumerate}
\item $\Lambda$ is a non-empty compact subset of $\X$;
\item $\Pi^k(\Lambda) = \Lambda$ for all $k \in \N$;
\item for each neighbourhood $U$ of $\Lambda$ and every $(N_0, P_0) \in \X^*$ there exists $k^* \in \N$ such that $\Pi^k(N_0, P_0) \in U$ for all $k \ge k^*$;
\item $\Lambda$ is an attractor of the dynamical system generated by the iterates of $\Pi$ and its basin of attraction contains $\X^*$.
\end{enumerate}
\end{theorem}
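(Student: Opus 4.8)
The plan is to recognize that $\Lambda = \bigcap_{k \in \N} \Pi^k(\K)$ is the standard maximal invariant set inside an absorbing compact set under a continuous map, and to extract all four properties purely from Proposition~\ref{pro.continuity} (continuity of $\Pi$) and Proposition~\ref{pro.invariant.compact} ($\K$ is compact, $\Pi(\K) \subseteq \K$, and $\K$ absorbs every orbit issued from $\X^*$). The starting observation is that $(\Pi^k(\K))_{k \in \N}$ is a nested decreasing sequence: from $\Pi(\K) \subseteq \K$ one gets $\Pi^{k+1}(\K) \subseteq \Pi^k(\K)$ by applying $\Pi$ repeatedly, and since the continuous image of a compact set is compact, each $\Pi^k(\K)$ is a compact, hence closed, subset of $\X$; it is non-empty because $\X^* \neq \emptyset$ forces $\K \neq \emptyset$ through the absorbing property.

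Statement~(1) then follows from Cantor's intersection theorem: a nested sequence of non-empty compact sets in a metric space has non-empty intersection, and $\Lambda$, being a closed subset of the compact set $\K$, is itself compact. For statement~(2) it suffices to prove $\Pi(\Lambda) = \Lambda$ and iterate. The inclusion $\Pi(\Lambda) \subseteq \Lambda$ is immediate: if $x \in \Pi^k(\K)$ for all $k$, then $\Pi(x) \in \Pi^{k+1}(\K) \subseteq \Pi^k(\K)$ for all $k$, so $\Pi(x) \in \Lambda$. The reverse inclusion $\Lambda \subseteq \Pi(\Lambda)$ is where compactness is genuinely used: given $y \in \Lambda$, for each $k \ge 1$ write $y = \Pi(x_k)$ with $x_k \in \Pi^{k-1}(\K) \subseteq \K$; extract a convergent subsequence $x_{k_j} \to x \in \K$; then $\Pi(x) = y$ by continuity, while $x \in \Pi^m(\K)$ for every $m$ because $x_{k_j} \in \Pi^{k_j-1}(\K) \subseteq \Pi^m(\K)$ for $j$ large and $\Pi^m(\K)$ is closed. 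Hence $x \in \Lambda$ and $y \in \Pi(\Lambda)$.

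The core of statement~(3) is the attraction lemma: for every open neighbourhood $U$ of $\Lambda$ there exists $n$ with $\Pi^n(\K) \subseteq U$ (a general neighbourhood contains an open one, so this entails no loss of generality). I would argue by contradiction: if for every $n$ there were a point $z_n \in \Pi^n(\K) \setminus U$, then compactness of $\K$ yields a subsequence $z_{n_j} \to z$, which lies in $\Pi^m(\K)$ for every $m$ by the same nested-and-closed argument as above, so $z \in \Lambda \subseteq U$; but $z_{n_j} \in \X \setminus U$ closed forces $z \notin U$, a contradiction. Granting the lemma, fix $(N_0, P_0) \in \X^*$: by Proposition~\ref{pro.invariant.compact} there is $k_0$ with $\Pi^{k_0}(N_0,P_0) \in \K$, hence $\Pi^k(N_0,P_0) \in \K$ for all $k \ge k_0$ by invariance of $\K$, and then for $k \ge k_0 + n$ we have $\Pi^k(N_0,P_0) = \Pi^n(\Pi^{k-n}(N_0,P_0)) \in \Pi^n(\K) \subseteq U$, so $k^* = k_0 + n$ works.

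Finally, statement~(4) merely collects the previous ones: $\Lambda$ is compact by~(1), fully invariant by~(2), and by~(3) every orbit issued from $\X^*$ eventually enters and stays in any prescribed neighbourhood of $\Lambda$, that is, converges to $\Lambda$; this is exactly the assertion that $\Lambda$ is an attractor whose basin of attraction contains $\X^*$. The main obstacle is the attraction lemma in~(3), together with the reverse invariance inclusion in~(2); both rest on the single technical device of passing to a convergent subsequence in the compact set $\K$ and using that each $\Pi^m(\K)$ is closed, so the entire argument hinges on having previously established the compactness of $\K$ and the continuity of $\Pi$.
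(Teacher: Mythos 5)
Your treatment of Statements 1--3 is correct and is, in substance, the paper's own soft-compactness argument: Statements 1 and 2 are the standard facts about the maximal invariant set inside a compact forward-invariant set, and your attraction lemma ($\Pi^n(\K)\subseteq U$ for some $n$), combined with the absorption of $\X^*$ into $\K$ from Proposition~\ref{pro.invariant.compact}, is a cleaner write-up of the paper's remark that every orbit issued from $\X^*$ is relatively compact with all limit points in $\Lambda$.

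The gap is Statement 4, which you dismiss as ``merely collecting the previous ones.'' It does not: in the paper it is the statement that absorbs almost the entire proof. The intended meaning of \emph{attractor} here is a compact invariant set that attracts an \emph{open neighbourhood} of itself, and nothing in (1)--(3) provides such a neighbourhood. The set $\K$ of Definition~\ref{def.K} has empty interior in $\X$: its elements are constrained to be Lipschitz with prescribed constants and to satisfy the compatibility condition \eqref{eq.Nat0}, so arbitrarily close (in $\lVert\cdot\rVert_{\X}$) to any point of $\K$ there are points outside $\K$; hence your lemma $\Pi^n(\K)\subseteq U$ says nothing about points near $\Lambda$ but off $\K$. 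Likewise $\X^*$ is not open (the condition $N_0>0$ a.e.\ is destroyed by arbitrarily small $L^\infty$-perturbations), and Statement 3 gives only pointwise, non-uniform attraction of its points. A compact invariant set can attract every point of a dense, non-open set without being an attractor, so ``compact $+$ invariant $+$ attracts $\X^*$'' does not close the argument.

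What must actually be proved --- and what the paper proves --- is the existence of an open set $V\supseteq\K$ and an integer $k_2$ with $\Pi^{k_2}(V)\subseteq\K\subset V$, whence $\Lambda=\bigcap_{k\in\N}\Pi^k(V)$ attracts the open neighbourhood $V$ and the basin (which contains $\X^*$ by Statement 3) is genuinely a basin of an attractor. The paper takes $V=(\Pi^{k_1})^{-1}(U)$ with $k_1>T_1$ a fixed integer and
\[
U = \left\{ (N_0,P_0)\in\X : \ \essinf_{[-T_1,0]} N_0 > N_{\max}^{1-\gamma} \ \text{ and } \ \frac{P_{\min}}{3} < P_0(s) < 3P_{\max} \ \forall s\in[-T_1,0] \right\},
\]
which is open and contains $\K$ by continuity of $\Pi$; the inclusion $\Pi^{k_2}(V)\subseteq\K$ then rests on the \emph{uniform} (initial-condition-independent) quantitative estimates of Statement 2 of Proposition~\ref{pro.uniform.persistence.for.N}, of Proposition~\ref{pro.price.dynamics}, and of Corollary~\ref{cor.price.dynamics} with $C=3$, which produce an absorption time $t_2$ valid simultaneously for all points of $V$, so that $k_2\ge\max\{2k_1,\,t_2+T_1\}$ works. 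None of this hard, model-specific input is recoverable from the compactness-and-subsequences device your proposal relies on, so Statement 4 remains unproved.
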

\begin{proof}
Statements~1 and~2 follow from the continuity of $\Pi$, the compactness of $\K$ and its $\Pi$-invariance.

Let $(N_0, P_0) \in \X^*$.
There exists $k_*$ such that $\Pi^k(N_0, P_0) \in \K$ for all $k \ge k_*$ by Proposition~\ref{pro.invariant.compact}.
Therefore, the sequence $\{\Pi^k(N_0, P_0)\}$ is relatively compact, all its limit points lie in $\Lambda$ and Statement 3 follows.

Let $V = (\Pi^{k_1})^{-1}(U)$ where $k_1 > T_1$ is a fixed integer and
\begin{align*}
U \egaldef \left\{ (N_0, P_0)  \in \X : \vphantom{\int}\right. & \essinf_{[-T_1, 0]}  N_0 > N_{\max}^{1-\gamma} \quad \text{ and } \\
                                                &\left. \frac{P_{\min}}{3} < P_0(s) < 3 P_{\max} \text{ for all } s \in [-T_1, 0] \right\}.
\end{align*}
$V$ is an open subset of $\X$, since $U$ is open and $\Pi$ is continuous, and contains $\K$ since $\Pi^{k_1}(\K) \subseteq \K \subset U$.
Thus, $V$ is an open neighborhood of $\Lambda$.
We claim that there exists an integer $k_2 \ge k_1$ such that $\Pi^{k_2}(V) \subseteq \K$.
Let us fix $(N_0, P_0) \in V$ and call $(N_r, P)$ the unique solution of \eqref{eq.modelB.1}--\eqref{eq.modelB.3} with initial data $(N_0, P_0)$ at time $t_0 = 0$.
Moreover we define $N_k(s) = N_r(k - s)$ and $P_k(s) = P(k - s)$ for $s \in [-T_1, 0]$, which means that $\Pi^k(N_0, P_0) = (N_k, P_k)$.
By construction $(N_{k_1}, P_{k_1}) \in U$ and, therefore $\inf_{[k_1-T_1, k_1]} N_r = \inf_{[-T_1, 0]} N_{k_1} > N_{\max}^{1 - \gamma}$ and, by Statement 2 of Proposition \ref{pro.uniform.persistence.for.N}, $N_r(t) \ge N_{\min}$ and $S(t) \ge S_{\min}$ for all $t \ge k_1$.
In particular $\inf_{[-T_1, 0]} N_{k} \ge N_{\min}$ for all $k \ge 2k_1$.
Next we observe that $P(t) \in (P_{\min} / 3,  3 P_{\max})$ for all $t \ge k_1$ by Corollary \ref{cor.price.dynamics} with $C = 3$.
The estimates of Proposition \ref{pro.price.dynamics} grants that, if $P(k_1) \not \in [P_{\min} / 2,  2 P_{\max}]$, then $P(t)$ enters (and thereafter remains in) the interval $[P_{\min} / 2,  2 P_{\max}]$ at a time which is bounded above by
\[
t_2 = k_1 + \max\left\{ \dfrac{D_{0} + S_{\max}}{2 \lambda [S_{\min} - D(2 P_{\max})]}, \dfrac{D_{0} + S_{\max}}{2 \lambda [D(P_{\min} / 2) - S_{\max}]} \right\},
\]
a number that is independent of $(N_0, P_0) \in V$.
The claim follows by the choice $k_2 \ge \max \{ 2 k_1, t_2 + T_1 \}$.

Therefore, we have that $\Pi^{k_2}(V) \subseteq \K \subset V$, $\bigcap_{k \in \N} \Pi^k(V) = \Lambda$ and $\Lambda$ is an attractor for the dynamical system generated by $\Pi$.
\end{proof}

Since the coefficient $\rhoseason$ is a $1$-periodic function, we argue now about the existence of $k$-periodic solutions of
\eqref{eq.modelB.1}--\eqref{eq.modelB.3}, i.e. fixed points of $\Pi^k$, $k \in \N$.
The analysis carried on till now allows to prove it as a straightforward application of Schauder fixed point theorem at least for $k \ge T_1$.
If we consider the set $\D \subset \X$ made of the couples $(N_0, P_0)$ such that they satisfy the same conditions
that define $\K$ with the only exception of \eqref{eq.Nat0}, we observe that $\D$ is a compact and convex subset of $\X$
(convexity is the reason why in the definition of $\D$ we have removed condition \eqref{eq.Nat0}).
Moreover, using the same notation introduced in the proof of Theorem~\ref{thm.global.attractor}, $N_k$ is a translation of the restriction of $N_r$ to the interval $[k - T_1, k]$ and, hence, it is still continuous since $ k - T_1 \ge 0 $ (see Remark \ref{rem.discontinuity.at.t0}).
Thus, $\D$ is $\Pi^k$-invariant by Propositions \ref{pro.IVP} and \ref{pro.uniform.persistence.for.N} and Corollary \ref{cor.price.dynamics} and Schauder fixed point theorem applies to $\Pi^k : \D \to \D$.

On the other hand, if $k < T_1$, this argument fails: indeed, $N_k$ may now have a jump discontinuity at time $-k$, since \eqref{eq.Nat0} is no more guaranteed, and $\D$ is no more $\Pi^k$-invariant.
However, the points of discontinuity may only appear at negative integer times and may only be of jump type, while between two successive integer times the component $N_k$ remains Lipschitz continuous with the same known constant.
This remark suggests in which way the definition of the set $\D$ should be modified in order to preserve its $\Pi^k$-invariance when $k < T_1$.
We detail the argument in the following result when $k = 1 < T_1$ and we remark that we did not pursue the question of finding the minimal period of $k$-periodic solutions: the fixed point of $\Pi^k$ obtained above might actually be a fixed point of $\Pi$.
\begin{theorem}\label{thm:1periodic}
Assume that \eqref{hyp.thm.global.attractor.1}--\eqref{hyp.thm.global.attractor.3} hold true.
Then equations \eqref{eq.modelB.1}--\eqref{eq.modelB.3} have a non-trivial $1$-periodic solution.
\end{theorem}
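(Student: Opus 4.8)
The plan is to obtain a $1$-periodic solution as a fixed point of the return map $\Pi$ via Schauder's theorem, following the strategy sketched above for $\Pi^k$ with $k\ge T_1$, but on a phase space large enough to remain invariant in the delicate case $k=1<T_1$. As already noted, the obstruction is that $N_1(s)=N_r(1+s)$ samples the solution on $[1-T_1,1]$, an interval which (since $T_1>1$) reaches back into the region $[-T_1,0)$ carrying the initial data, and across $t=0$ the component $N_r$ may jump (Remark~\ref{rem.discontinuity.at.t0}); hence the set $\D$ used for $k\ge T_1$ fails to be $\Pi$-invariant. The remedy is to enlarge $\D$ to the set $\D_1$ of pairs $(N_0,P_0)$ satisfying the same bounds $N_{\min}\le N_0\le N_{\max}$, $P_{\min}/2\le P_0\le 2P_{\max}$ and the Lipschitz bound $2\lambda P_{\max}$ for $P_0$ as in $\D$, but requiring of $N_0$ only that it be Lipschitz with constant $2m_0R_1\rhomax$ on each subinterval between two consecutive integers of $[-T_1,0]$, thereby allowing $N_0$ to jump at the integer points. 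Condition~\eqref{eq.Nat0} is again dropped, so $\D_1$ is convex, and $\D_1$ is compact in $\X$ by Ascoli--Arzel\`a applied separately on each of the finitely many subintervals, uniform convergence there yielding $L^\infty$-convergence on $[-T_1,0]$ since the integer points are negligible.

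First I would verify $\Pi(\D_1)\subseteq\D_1$. Fix $(N_0,P_0)\in\D_1$ and let $(N_r,P)$ be the solution at $t_0=0$. The bound $N_r\le N_{\max}$ and the Lipschitz constant $2m_0R_1\rhomax$ on $[0,+\infty)$ come from Proposition~\ref{pro.IVP}. For the lower bound, since $N_0\ge N_{\min}\ge N_{\max}^{1-\gamma}$ (using $m_0R_0c_0>2$) and $N_0\le N_{\max}$ on $[-A_1,0]$, Statement~1 of Proposition~\ref{pro.uniform.persistence.for.N} applies with $\hat t=0$ and, iterated exactly as in the proof of its Statement~2, gives $N_r(t)\ge N_{\min}$ and hence $S(t)\ge S_{\min}$ for all $t\ge 0$. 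Feeding $S(t)\ge S_{\min}$ together with $P(0)=P_0(0)\in[P_{\min}/2,2P_{\max}]$ into Corollary~\ref{cor.price.dynamics} (with $C=2$) yields $P(t)\in[P_{\min}/2,2P_{\max}]$ and $\sabsj{P'(t)}\le 2\lambda P_{\max}$ for all $t\ge 0$, so $P_1$ inherits the correct bounds and Lipschitz constant and remains continuous (its values match at $s=-1$ because $P_0$ is continuous at $0$). Finally, $N_1$ is Lipschitz with constant $2m_0R_1\rhomax$ on $[-1,0]$ (there it samples $N_r$ on $[0,1]$), it may jump at $s=-1$ (the image of the possible jump of $N_r$ at $t=0$), and on $[-T_1,-1]$ it reproduces the jumps of $N_0$, which sit at integers; thus all possible discontinuities of $N_1$ lie at integer points while the piecewise Lipschitz structure survives, and $(N_1,P_1)\in\D_1$.

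With invariance established, Schauder's fixed point theorem applied to the continuous map $\Pi:\D_1\to\D_1$ (continuity by Proposition~\ref{pro.continuity}, compactness and convexity of $\D_1$ as above) produces a fixed point $(N^*,P^*)\in\D_1$, i.e.\ a $1$-periodic solution of \eqref{eq.modelB.1}--\eqref{eq.modelB.3}. It is non-trivial because $N^*\ge N_{\min}>0$ and $P^*\ge P_{\min}/2>0$, so it is neither the trivial nor a semi-trivial solution. The hard part will be precisely the invariance step $\Pi(\D_1)\subseteq\D_1$: the bookkeeping showing that discontinuities of $N_1$ can occur \emph{only} at integer times, with the uniform Lipschitz bound preserved between them, is exactly what makes $\D_1$ simultaneously convex, compact and $\Pi$-invariant. (One can moreover check that at the fixed point the candidate jump at $s=-1$ actually closes up, since periodicity forces $N_r(0^-)=N_r(1)=N_r(0)$, so that $N^*$ is genuinely continuous and satisfies \eqref{eq.Nat0}; this refinement, however, is not needed for the statement.)
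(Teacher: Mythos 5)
Your proof is correct and takes essentially the same route as the paper's: your set $\D_1$ (uniform bounds, Lipschitz $P_0$, and $N_0$ Lipschitz between consecutive integers with jumps allowed only at integer points, condition \eqref{eq.Nat0} dropped) is exactly the set $\D$ built in the paper's proof of Theorem~\ref{thm:1periodic}, and the convexity, Ascoli--Arzel\`a compactness, $\Pi$-invariance bookkeeping, and Schauder steps coincide. The extra details you supply (the explicit iteration of Statement~1 of Proposition~\ref{pro.uniform.persistence.for.N} starting from $\hat{t}=0$, and the observation that periodicity closes the jump at $s=-1$ at the fixed point) simply make explicit what the paper delegates to its citations of Propositions~\ref{pro.IVP}, \ref{pro.uniform.persistence.for.N} and Corollary~\ref{cor.price.dynamics}.
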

\begin{proof}
We will consider only the case $T_1 > 1$ since we already explained how to deal with the case $T_1 \le 1$.
Let $i_0 \ge 1$ be the largest integer such that $i_0 < T_1$ and let $J_i = (-i, -i + 1)$, for $i = 1, \dots, i_0$, while we let $J_{i_0 + 1} = (-T_1, -i_0)$.
Moreover, let $\D$ be made by the couples $(N_0, P_0) \in \X$ such that:
\begin{itemize}
\item $N_0$ is Lipschitz continuous on each open interval $J_i$, for $i = 1, \dots, i_0 + 1$, with constant $2 m_0 R_1 \rhomax$;
\item $P_0$ is Lipschitz continuous with constant $2 \lambda P_{\max}$;
\item $N_0(s) \in [N_{\min}, N_{\max}]$ and $P_0(s) \in [P_{\min}/2, 2P_{\max}]$ for all $s \in [-T_1, 0]$.
\end{itemize}
We note that the assumptions on $N_0$ ensure that the one sided limits $N_0(-i^+)$ and $N_0(-i^-)$ exist and are finite for all $i = 1, \dots, i_0$.

The set $\D$ is clearly convex and let us consider a sequence $\{(N_{0,n}, P_{0,n})\}_{n \in \N} \subset \D$ in order to prove that $\D$ is compact in $\X$.
By Ascoli-Arzel\`a's Theorem it is possible to extract a subsequence, that we still call $\{(N_{0,n}, P_{0,n})\}_{n \in \N}$, such that $P_{0,n}$ and $N_{0,n}|_{J_i}$ converge uniformly for $i=1, \dots, i_0 +1$.
It is easy to show that the convergence is with respect to the norm of $\X$ and that the limit belongs to $\D$, which, thus, is compact.

Now, by the very definition of $\Pi$, we have that, if $(N_r, P)$ is the solution of \eqref{eq.modelB.1}--\eqref{eq.modelB.3} with initial data $(N_0, P_0) \in \D$ at time $t_0 = 0$ and $(N_1, P_1) := \Pi(N_0, P_0)$, then $N_1(s) = N_0(s + 1)$ for $s \in \left[-T_1, -1\right)$, while $N_1(s) = N_r(s + 1)$ for $s \in [-1, 0]$.
Therefore, $N_1$ may have jump discontinuities only at $i = -i_0, \dots, -1$.
Propositions \ref{pro.IVP} and \ref{pro.uniform.persistence.for.N} and Corollary \ref{cor.price.dynamics} (with $C = 2$) grant that $\Pi(N_0, P_0) \in \D$ and Schauder fixed point theorem concludes the proof.
\end{proof}

\section{Results of simulation experiments} 
\label{sec.simus}

This section shows numerical experiments for the model described in Section~\ref{sec.our-model}.

\subsection{Parameters of the model}
\label{sec.values-param}

We discuss here in which way one could choose some of the model's parameters 
in order to take into consideration real-world cattle, mainly hog.
However, we recall that in our numerical experiments our choice will be to remain close to the values used in \cite{Arl:2004}
for \textit{Microtus Epiroticus} (see Subsection~\ref{sec.param.setting}), 
since our primary goal here is to check the effects that the
interaction with the market equation has on the dynamics of Yoccoz-Birkeland equation (and viceversa).

\subsubsection{Biological parameters}
\label{sec.values-param.biological}

For the common pork the following choices are supported from the literature  \cite{Sor:1979} 
as well as from discussion with farmers \cite{Zanda:2009}. 

In our experiments, similarly to \cite{Arl:2004}, 
we take the fertility function defined by 
\[ 
m(N) = m_0 \bigl( \max \{N,1\} \bigr)^{-\gamma} 
\enspace , 
\]   
where $\gamma$ and $m_0$ remain to be chosen. 
The parameter $ m_{0} $ should count the largest number of female pups 
(approximately one half of the total number of pups) a fertile female
can give birth to in a year.
For common pork it may reasonably range in the interval $ [10, 13] $ \cite{Kna_Hos:2013}.
On the other hand, when one considers races of pork, like Cinta Senese in Italy, 
which are bred in a semi-wild environment, a lower
choice of $ m_{0} $ should be chosen.
Assuming a maximum of two litters per year \cite{Cam:2009} and the data provided for instance in \cite{Fra_Pug:2007},
the value of $ m_{0} $ for Cinta Senese should range in the interval $ [5, 10] $.

The following table is taken from \cite{Sor:1979} and could be used to choose values for $ A_{0} $ depending on the species one is
considering:
\begin{center}
\begin{tabular}{|l|c|c|}
\hline
Species & age at puberty (months) & length of gestation (days) \\
\hline
sow     & $ [ 5, 8] $            & $ 113 $ \\
\hline
cow     & $ [7, 18] $           & $ 283 $ \\
\hline
sheep   & $ [6, 9] $            & $ 147 $ \\
\hline
mare    & $ [10, 24] $          & $ 336 $ \\
\hline
\end{tabular}
\end{center}

It is more difficult to suggest a choice for $ A_{1} $ since the maximal age of reproducing cattle is a quantity that depends heavily on the
breeding strategies of the breeder.
In our setting, $ A_{0} $ and $ A_{1} $ are taken as in \cite{Arl:2004}.
\subsubsection{Seasonality}
We assume that the births are synchronized \cite{Zanda:2009} 
and we consider the following $1$-periodic piecewise-constant seasonal factor:
\[
\rhoseason(t) = \frac{1}{1-\rhowinter} \un_{t - \lfloor t \rfloor \in [0,1-\rhowinter)} 
\enspace .
\]
\subsubsection{Demand function}
In the numerical experiments we perform in this section, we choose the following 
demand function 
\begin{equation} \label{eq.demand-exp} D(P) = D_{\exp}(P) \egaldef D_0 e^{ -\alpha_D P } \end{equation}
for some parameters $D_0,\alpha_D>0$.
For simplicity we took the parameters $ \Omega_{0} $ and $ \Omega_{1} $ in the supply function to be equal to $ A_{0} $ and $ A_{1} $,
respectively.
As we have already noted the normalization of the supply is $\Delta\Omega=\Omega_1-\Omega_0$. 
\subsubsection{Breeder strategy}
The function $R$ defines how the breeder takes into account the current meat price 
for deciding how to split newborn females among the reproduction line (for a long-term increase of the supply) 
and the butchery line (for a short-term increase of the supply, but still delayed). 
A short term strategy would be to take $R$ close to 0 when the price is high. 
A long term strategy is to take $R$ close to 1 when the price is high. 
In the numerical experiments we choose 
\begin{equation} \label{eq.breeder-logistic} R(P) = R_{\mathrm{logistic}}(P) \egaldef R_0 + (R_1-R_0) f_d(P/P_0) \end{equation}
where 
\[
f_d(x) = \begin{cases}
\frac{x^d}{2}                        &\qquad \mbox{if } x \in [0,1) \\
\frac{1}{1 + \exp\paren{-2 d (x-1)}} &\qquad \mbox{otherwise} \end{cases}
\]
and $R_0, R_1 \in [0,1]$ and $P_0, d > 0$ have to be chosen. 

\subsubsection{Parameter setting}\label{sec.param.setting}
The default set of parameters (called \HH, because it is close to realistic values for pork, 
see Section~\ref{sec.values-param.biological}) and functions is the following: 
\begin{itemize}
\item population dynamics: 
$A_0=0.18$, 
$A_1=2$, 
$m$ defined by \eqref{eq.density.dependent.fertility} with 
$m_0=5$ and 
$\gamma=8.25$, 
and the 
seasonality function is 
\[ \rhoseason(t) = \frac{1}{1-\rhowinter} \un_{t - \lfloor t \rfloor \in [0,1-\rhowinter)} \]  with $\rhowinter=0.79$. 
\item market dynamics: $\lambda = 1$ and the demand function is $D=D_{\exp}$ is defined by equation~\eqref{eq.demand-exp}  
with $D_0 = 5$ and $\alpha_D =1$.
\item interaction between population and market: 
$\Omega_0=0.18$, 
$\Omega_1=2$, 
and 
$R=R_{\mathrm{logistic}}$ is defined by equation~\eqref{eq.breeder-logistic} 
with $R_0 = 0$ (minimal value), $R_1=1$ (maximal value), $P_0=1$ (price threshold) and $d=4$ (``degree of $R(P)$ for small $P$''). 
\end{itemize}

The setting \HH\ was chosen close to the main setting studied in \cite{Arl:2004} (up to the changes in the model),
with $m_0$ one order of magnitude smaller 
(i.e., 5 instead of 50, which is more realistic, and necessary to obtain a reasonable behaviour
because of the change in the survival function) and $\rhowinter$ modified 
(the latter choice resulted from a rough exploration of the
main parameters of the model). 
We remark that the choice of parameters in \HH\ is not far from realistic ones for livestock production. 

In our numerical experiments, only a couple of parameters will be varying: $m_0$ and $\gamma$, 
which are the ones that mostly influence the population dynamics.
In particular, figures \ref{fig.bifurc.HH-gamma.1d.Nr} and \ref{fig.bifurc.HH-gamma.1d.P} 
show that the model has a chaotic behavior also for
values of $ \gamma $ close to $2$, i.e. a much weaker dependence of fertility on over-population.

\subsection{Study of one parameter set (setting \HH)}  \label{sec.results.HH}

In this section, we focus on the setting \HH, for which some interesting behaviour can be observed. 

\subsubsection{Continuous time dynamics} \label{sec.results.HH.cont}
\begin{figure}
\begin{center}
\includegraphics[width=0.9\textwidth]{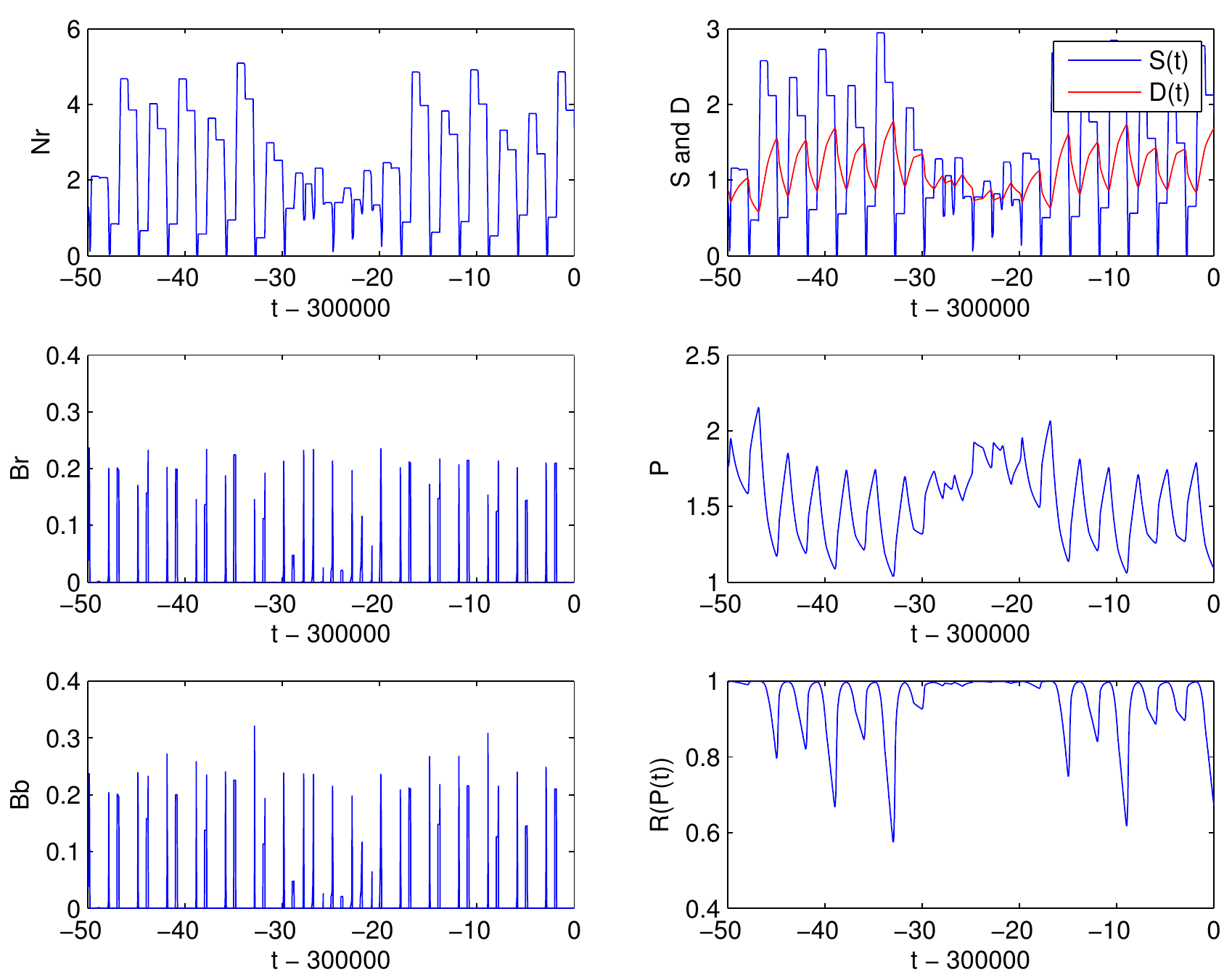}
\end{center}
\caption{\label{fig.HH.cont-50-6graphs}Continuous time dynamics for setting \HH\ over 50 years: $N_r$ (top left), $S$ and $D$ (top right), $B_r$ (middle left), $P$ (middle right), $B_b$ (bottom left) and $R(P)$ (bottom right). }
\end{figure}
The continuous time dynamics of setting \HH\ can be visualized on Figure~\ref{fig.HH.cont-50-6graphs} over 50 years.
Note that on Figure~\ref{fig.HH.cont-50-6graphs}
the mature reproducing population $N_r(t)$ goes through very small values, of order $10^{-3}$, which might seem unrealistic. 
This phenomenon can be interpreted as follows. 
There is no mortality, except for animals reaching the maximal ages $A_1$ and $\Omega_1$. 
So, if most of the mature reproducing population $N_r$ was born during a short period of time, $N_r(t)$ will drop down to (almost) zero $A_1$ years later. 
Here, we observe such drops, which do not endanger the whole population because it holds during the reproducing season and it doesn't occur too fast (the reproducing females were not all born during a too short time period). 
Therefore, once the drop has started, as soon as $N_r$ goes below 1, a large number of birth will happen, leading to mature reproducing females after a time delay $A_0$. 
So, even if $N_r$ was going exactly to zero during this delay period, it would increase again as soon as the newborn females become mature. 

A good way to visualize this phenomenon is given by Figure~\ref{fig.HH.cont-short-check-Nr} where $N_r(t)$ is plotted (left) together with the totale female population in the reproducing line (right): the latter quantity never goes below $0.4$.

\begin{figure}
\begin{center}
\includegraphics[width=0.45\textwidth]{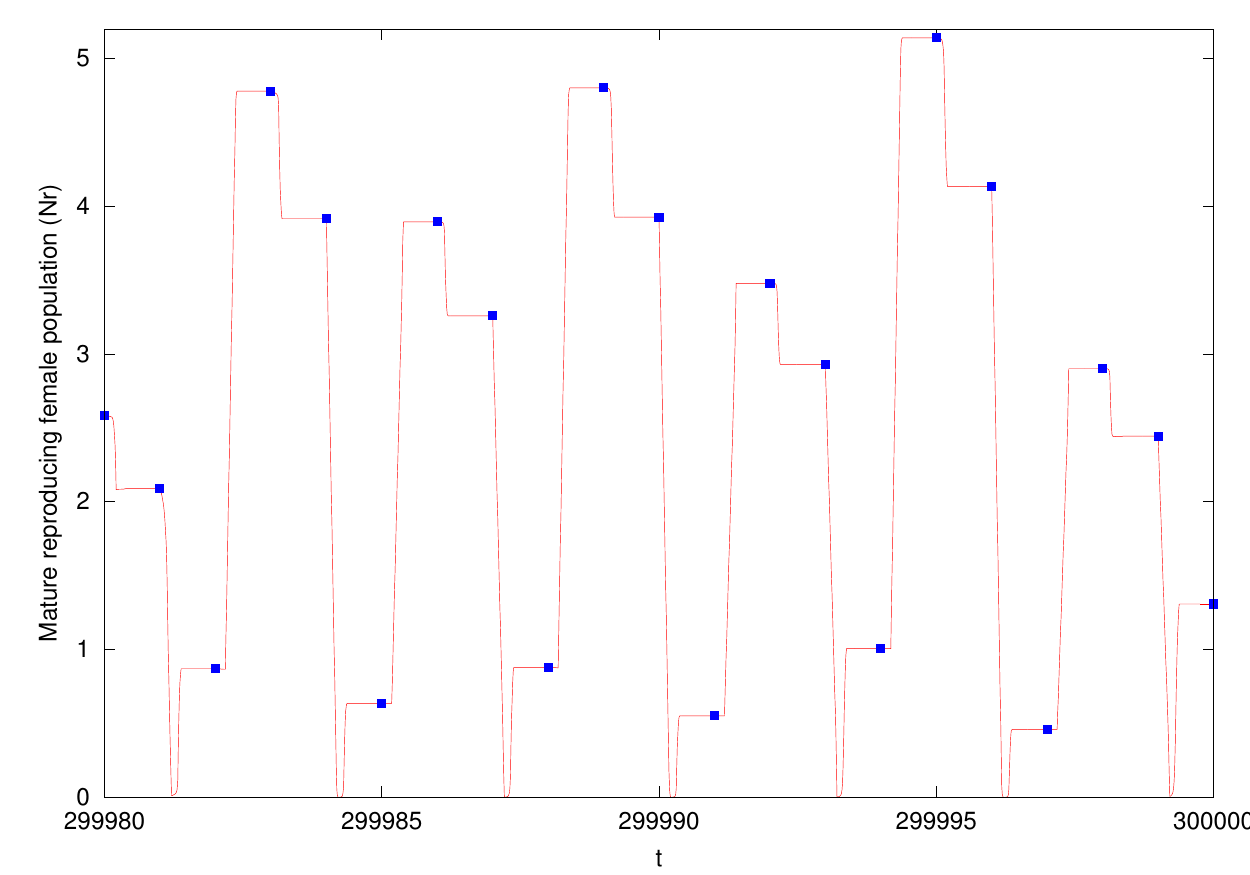}
\includegraphics[width=0.45\textwidth]{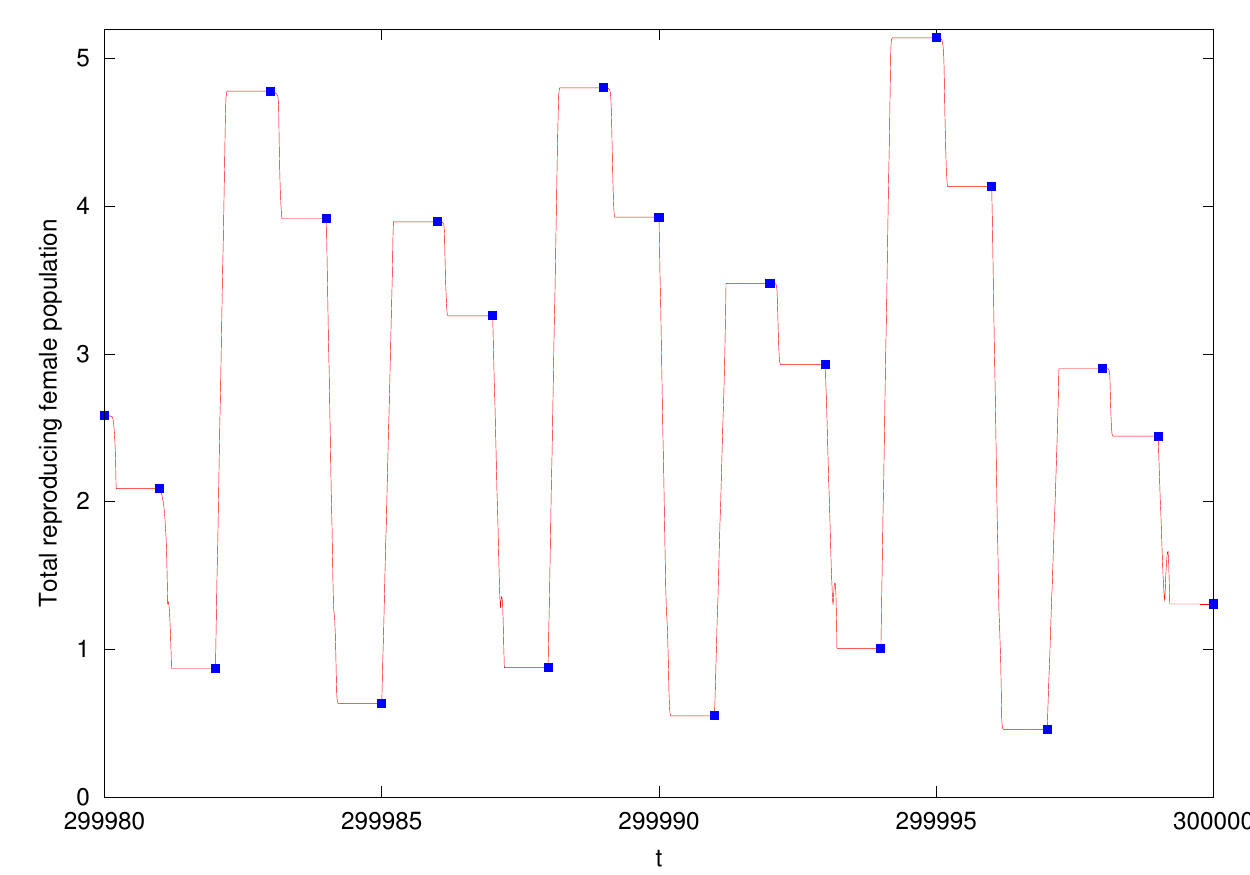}
\end{center}
\caption{\label{fig.HH.cont-short-check-Nr} 
Setting \HH, continuous time dynamics of reproducing female population over 20 years: 
mature ($N_r$, left) and total population (right).}
\end{figure}

A similar phenomenon holds with the butchery line (see Figure~\ref{fig.HH.cont-short-check-S}), 
where the ``mature'' butchery population (proportional to the supply $S(t)$) 
goes close to zero (left) but not the total butchery population (right). 

\begin{figure}
\begin{center}
\includegraphics[width=0.45\textwidth]{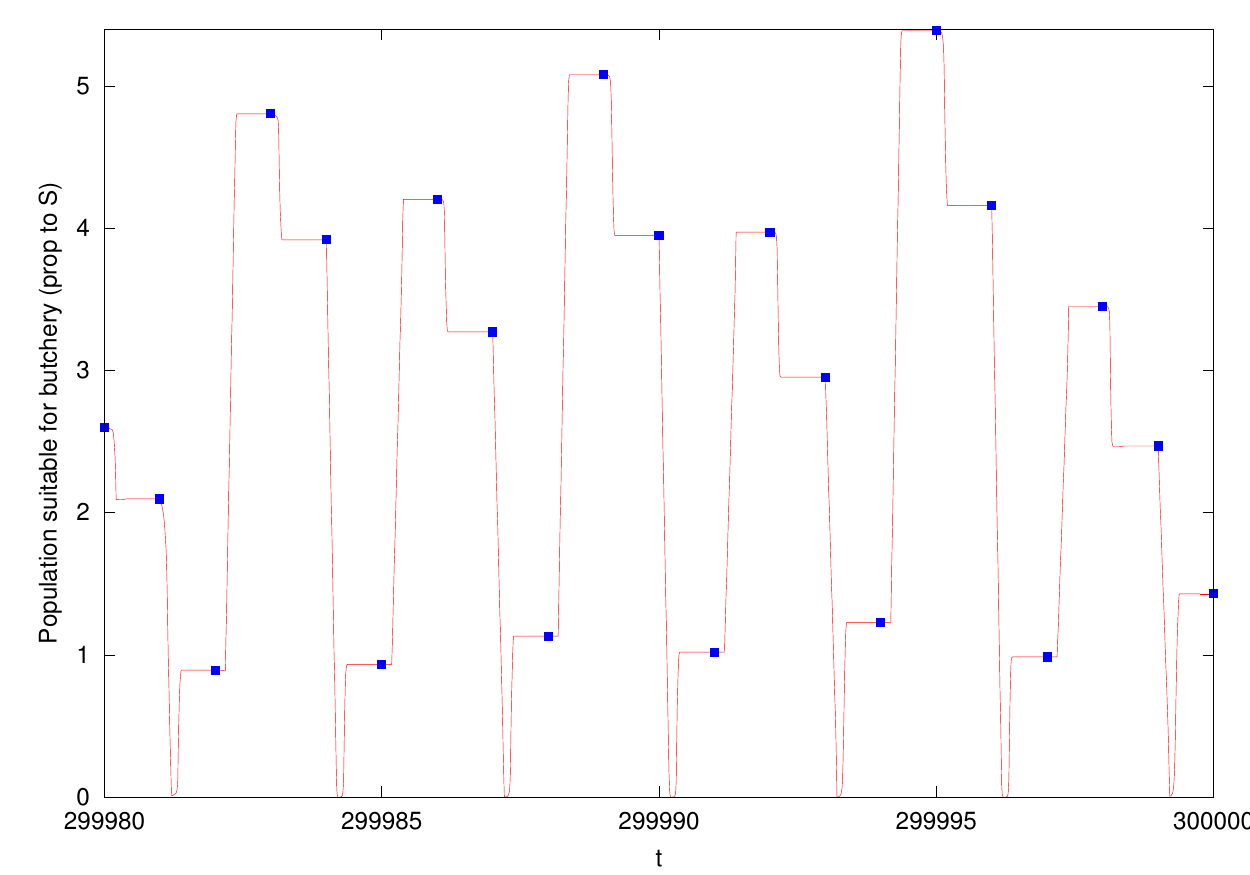}
\includegraphics[width=0.45\textwidth]{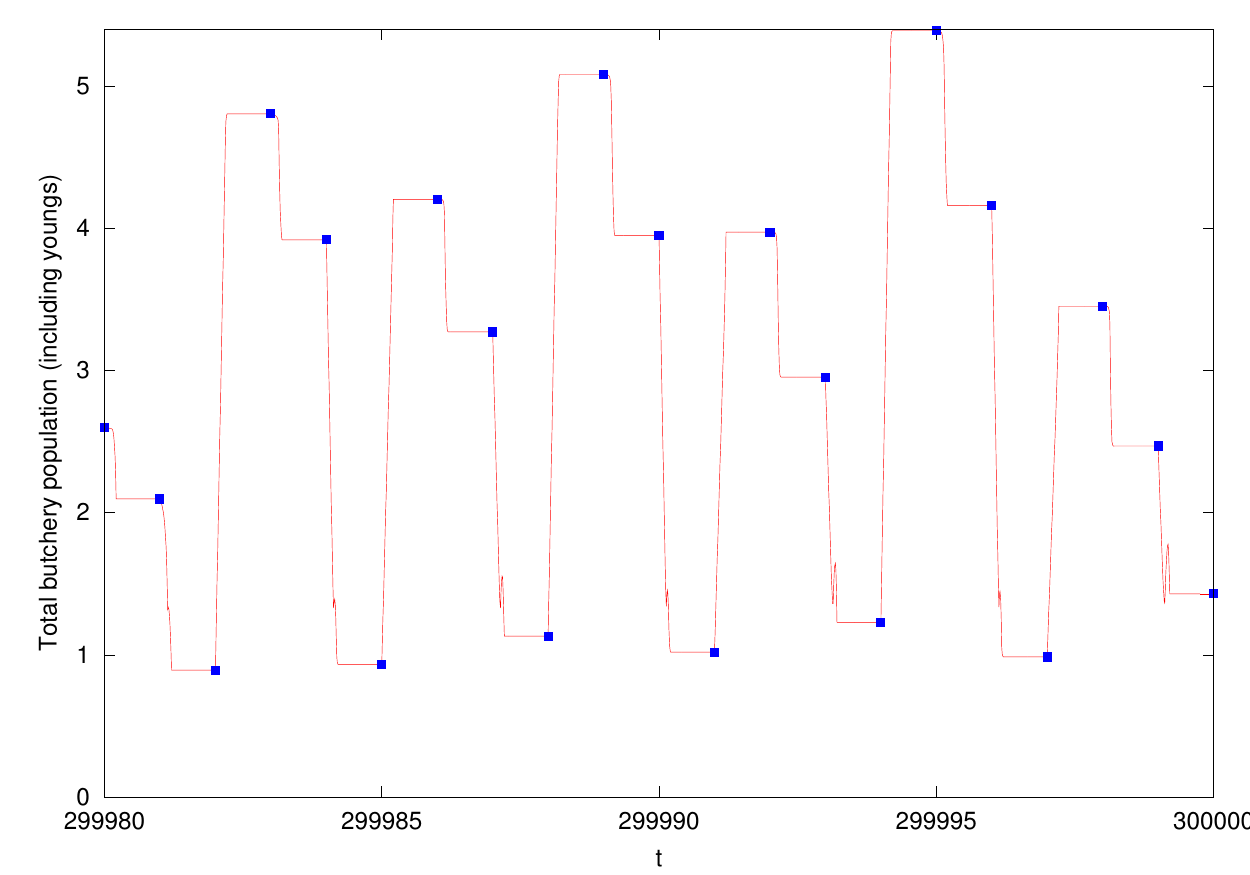}
\end{center}
\caption{\label{fig.HH.cont-short-check-S} 
Setting \HH, continuous time dynamics of butchery population over 20 years: 
``mature'' ($\propto S$, left) and total population (right).}
\end{figure}

\subsubsection{Yearly dynamics} \label{sec.results.HH.3d}
A natural way to display the behavior of the dynamical system we are studying is to only look at 
the values it takes for $t \in \N$, that is, at the very beginning of the birth period.
We recall that the continuous time dynamics takes place in an infinite-dimensional space $ \X $ corresponding to couples of
functions $ ( N(t), P(t) ) $, where $ t $ varies in some interval.
Our discretization (see Appendix~\ref{app.technical-details}) leads to a discrete dynamical system in a phase space of dimension 
$2 \times 201 $.
Then, we can visualize the dynamics by plotting in $\R^3$ the set
$\set{ (N_r(t), N_r(t+1), N_r(t+2) ) \, , \, t \in \N}$, as shown by Figure~\ref{fig.HH.Nr-3d.less-pts},
or the set $\set{ (P(t), P(t+1), P(t+2) ) \, , \, t \in \N}$, as shown by Figure~\ref{fig.HH.P-3d.less-pts}.
Both figures are the projection of the same attractor on two different subspaces.
We estimate the fractal dimension of the first set to $1.52$ and of the second one to $1.84$.
It seems reasonable to conjecture the existence of a strange attractor of dimension $ d \in (1,2) $.
See Section~\ref{app.technical-details.dim_f},  Figure~\ref{fig.HH.Nr-dim_f} and Figure~\ref{fig.HH.P-dim_f},
for details on how we estimate the fractal dimension. 
Note that Figure~\ref{fig.HH.Nr-3d.less-pts} shows an attractor similar to the one of the Yoccoz-Birkeland model with
$\rhowinter = 0.30$, $\gamma = 8.25$, $A_0 = 0.18$ and $ m_{0}=50 $ \cite[Figure~12]{Arl:2004}; 
only the center of the attractor here seems more complex. 
On the other hand, the interaction between the population and the price equation is crucial, 
as shown by Section~\ref{sec.results.HH.JA} below. 
\begin{figure}
\begin{center}
\includegraphics[width=0.8\textwidth]{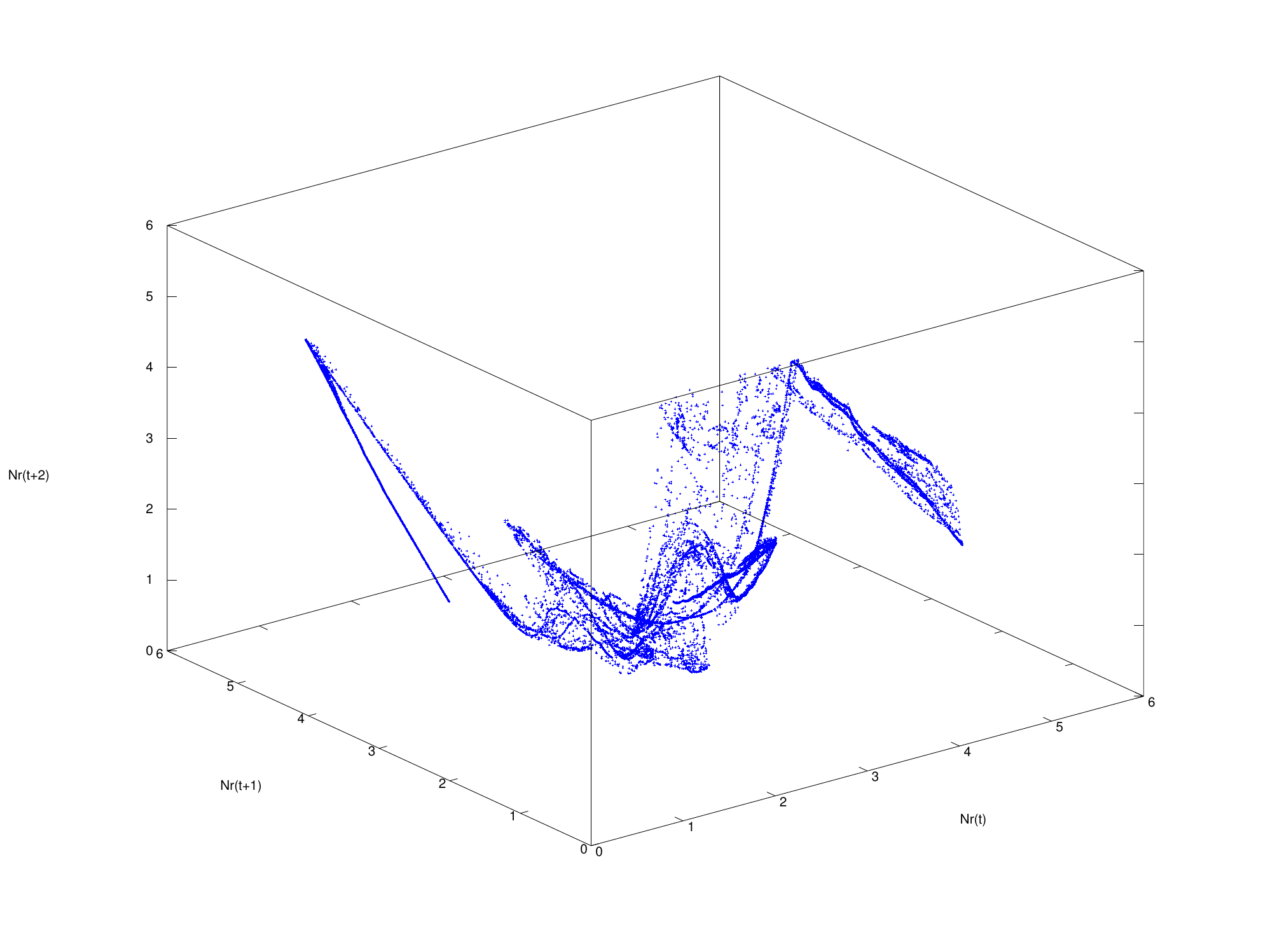}
\end{center}
\caption{\label{fig.HH.Nr-3d.less-pts} 
3d plot of $( N_r(t), N_r(t+1), N_r(t+2) ) $ with $ t \in \N $, $ 280\,000 \leq t \leq 300\,000 $ for setting \HH. 
}
\end{figure}
\begin{figure}
\begin{center}
\includegraphics[width=0.8\textwidth]{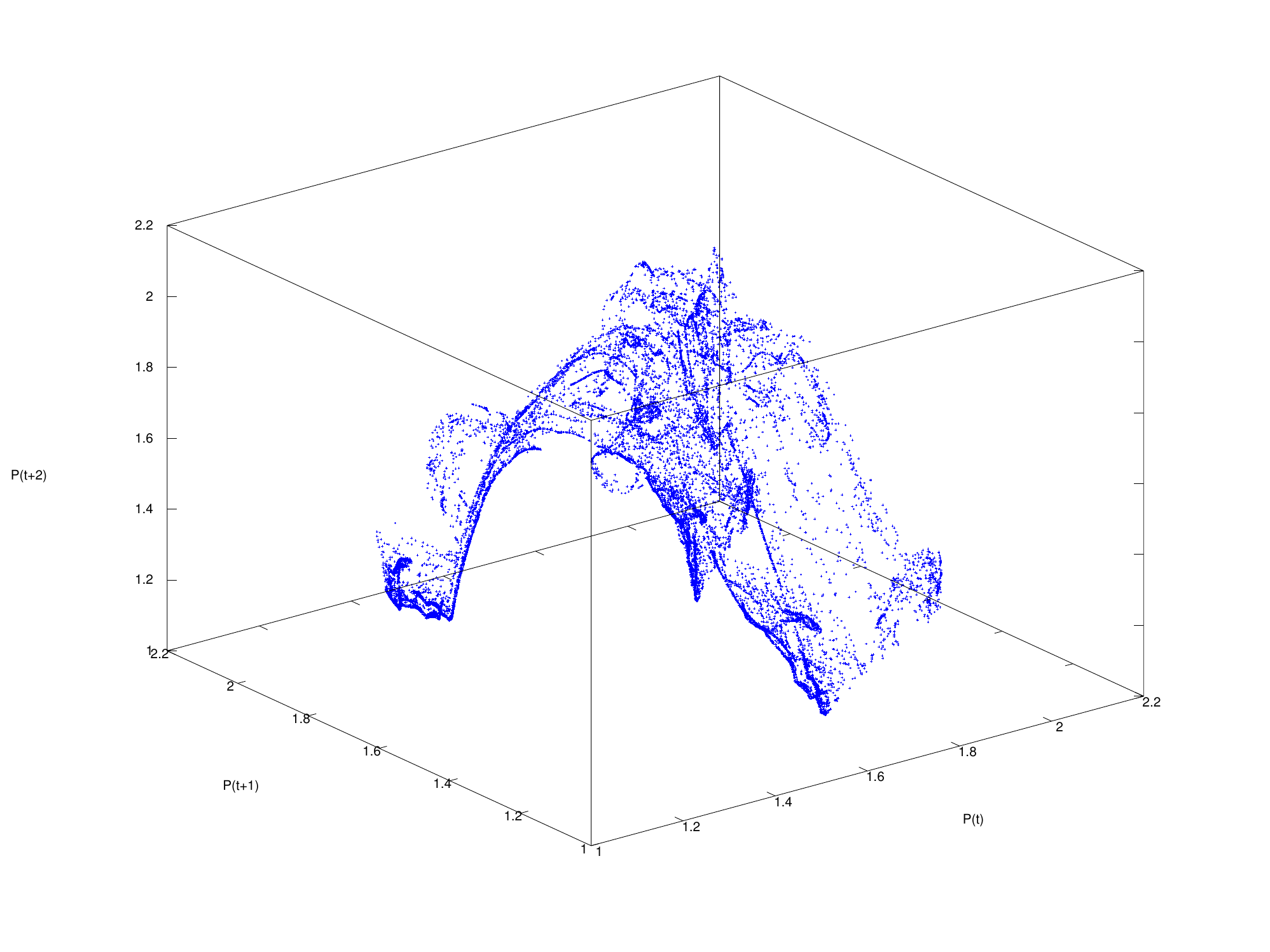}
\end{center}
\caption{\label{fig.HH.P-3d.less-pts} 
3d plot of $ ( P(t), P(t+1), P(t+2) ) $, with $ t \in \N$, $ 280\,000 \leq t \leq 300\,000 $ for setting \HH. 
}
\end{figure}

\subsubsection{Comparison with setting \JA: \HH\ with $R$ constant} \label{sec.results.HH.JA}
Let us call \JA\ the setting \HH\ with the value of the fraction of reproducing females in \eqref{eq.modelB.1} 
freezed: $R(P) = R_{\mathrm{cst}} \approx 0.955$ 
for all $P>0$. 
The constant value taken for $R$ is equal to the empirical average of $R(P(t))$ in setting \HH\ for $t \in [290\,000 , 300\,000]$. 
So, comparing results obtained with \JA\ and \HH\ shows the effect of the coupling between price and population. 
Note also that setting \JA\ then is an instance of the Yoccoz-Birkeland model with $A_0 = 0.18$, $A_1=2$, $\gamma=8.25$ and $m_0 \approx 4.78$ 
(with slightly different functions $m$ and $\rhoseason$, compared to \cite{Arl:2004}). 

The yearly dynamics of setting \JA\ can be visualized on Figure~\ref{fig.JA.3d} for $95\,000 \leq t \leq 100\,000$: 
it exihibits only a low-complexity seemingly nonchaotic orbit, very close to being periodic and totally 
different also from the original Yoccoz-Birkeland attractor.  
\begin{figure}
\begin{center}
\includegraphics[width=0.8\textwidth]{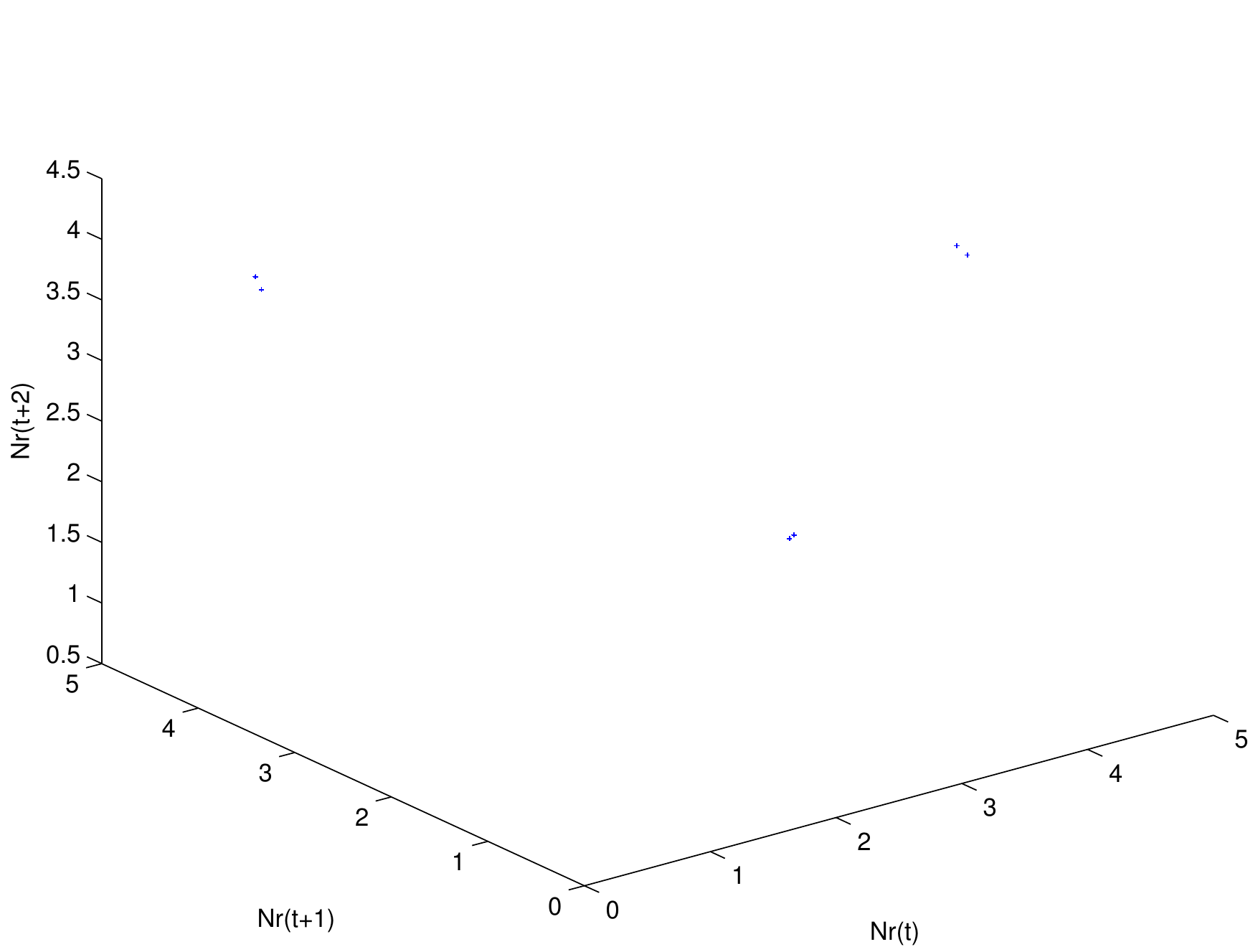}
\end{center}
\caption{\label{fig.JA.3d}3d plot of $ ( N_r(t), N_r(t+1), N_r(t+2) ) $, with $ t \in \N $, $ 95\,000 \leq t \leq 100\,000 $ for setting \JA. 
}
\end{figure}

\subsubsection{Analysis of chaos} \label{sec.results.HH.chaos}
In order to analyze the chaoticity of the dynamics of the price $P$, we followed a time series approach as in \cite{Kan_Sch:2003}: we computed
the autocorrelation function and determined its first zero $ \tau^{*} $.
Then we sampled the price time series at time steps multiple of $ \tau^{*} $ and we computed the combinatorial entropy for the binary sequence
obtained by looking at the sign of price returns (see Figure~\ref{fig.HH.P-chaos}). 

Given a times series $(Y(t))_{t \geq 0}$, its autocorrelation function is defined by  
\[ 
\forall \tau>0 \, , \quad 
R_Y(\tau) 
\egaldef \frac{ \moyB{ \parenb{ Y - \moy{Y} }  \parenB{ Y(\cdot+\tau) - \moyb{Y(\cdot+\tau)} } } } { \sqrt{  \moyB{ \parenb{ Y - \moy{Y} }^2 }  \moy{ \parenB{ Y(\cdot+\tau) - \moyb{Y(\cdot+\tau)} }^2 } } } 
\enspace,
\]
where $\moy{\cdot}$ means an average over time. 
In other words, the autocorrelation $R_Y(\tau)$ is the correlation between $Y(t)$ and $Y(t+\tau)$ 
for a random $t$ chosen uniformly in $[0,T]$, for some large time window $T>0$. 
So, roughly speaking, $R_Y(\tau)$ close to 0 means that $Y(t)$ does not provide information for predicting $Y(t+\tau)$. 

The autocorrelation function of $(P(t))_{t \geq 0}$ is plotted on the top left of Figure~\ref{fig.HH.P-chaos}. 
Its absolute value tends to be smaller for larger values of $\tau$, and it crosses zero several times, 
first for $\tau = \tau^{\star} \approx 1.37$.
Therefore, the discrete dynamical system $(P(k \tau^{\star}))_{k \in \N}$ 
is a good candidate for being unpredictable. 
So, we consider the corresponding ``returns'' 
\[ 
r_k \egaldef \log_{10} P \bigl( (k+1) \tau^{\star} \bigr)  - \log_{10} P(k \tau^{\star})
\]
(which are plotted on the top middle graph of Figure~\ref{fig.HH.P-chaos}), 
and evaluate the combinatorial entropy $H_K$ of the binary sequence 
$( (\sign(r_i))_{k-K+1 \leq i \leq k} )_{k \in \N}$ for various values of $K$
\footnote{The procedure we follow provides a lower bound to the Kolmogorov-Sinai entropy of the flow.
Indeed, this is defined as the supremum over all finite partitions of the rate of change of entropy
due to the finer partitioning given by the flow at each time step.
Here we have just fixed a particular type of partition (corresponding to the choice of sign of the returns).
The use of $\tau^*$ instead of $1$ as a time unit only affects entropy by a multiplicative factor.}.

The top right plot of Figure~\ref{fig.HH.P-chaos} shows $H_K$ as a function of $K$: 
there is a clear linear trend, with a positive slope $0.611$ (correlation coefficient $0.995$), 
which suggests a positive entropy. 

We also tried the same analysis for the discrete dynamical system $(P(k))_{k \in \N}$, 
that is, the time series at integer times, 
for which the autocorrelation function is not estimated to be (almost) zero. 
The results are given on the bottom part of Figure~\ref{fig.HH.P-chaos}, 
with the corresponding returns (middle) and entropy $H_K$ as a function of $K$ (right): 
linear regression still suggests a linear trend with a positive slope $0.647$ 
(correlation coefficient $0.983$), even if the returns seem less unpredictable 
as with $\tau^{\star}$ time steps. 
\begin{figure}
\begin{center}
\includegraphics[width=0.3\textwidth]{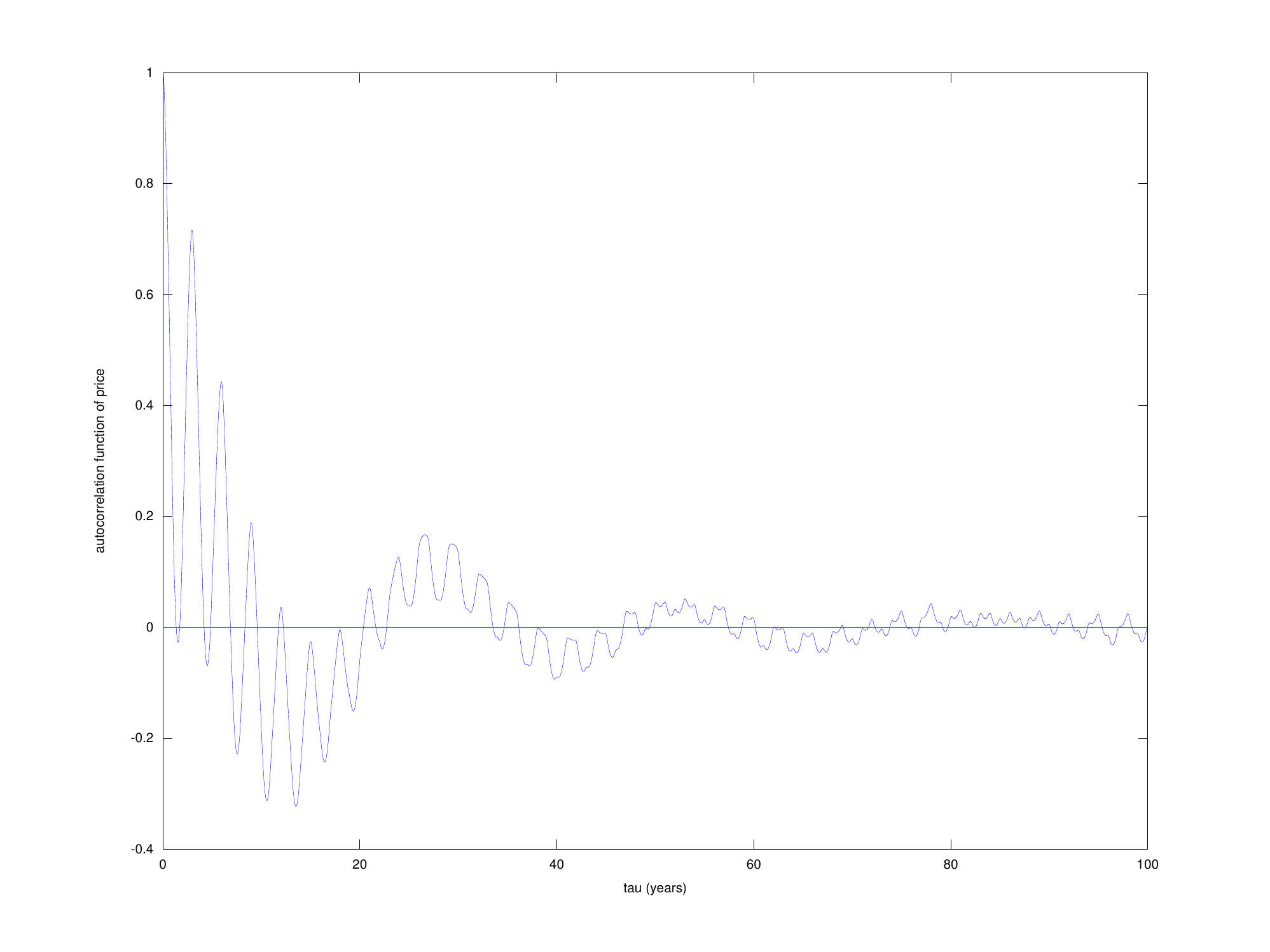}
\includegraphics[width=0.3\textwidth]{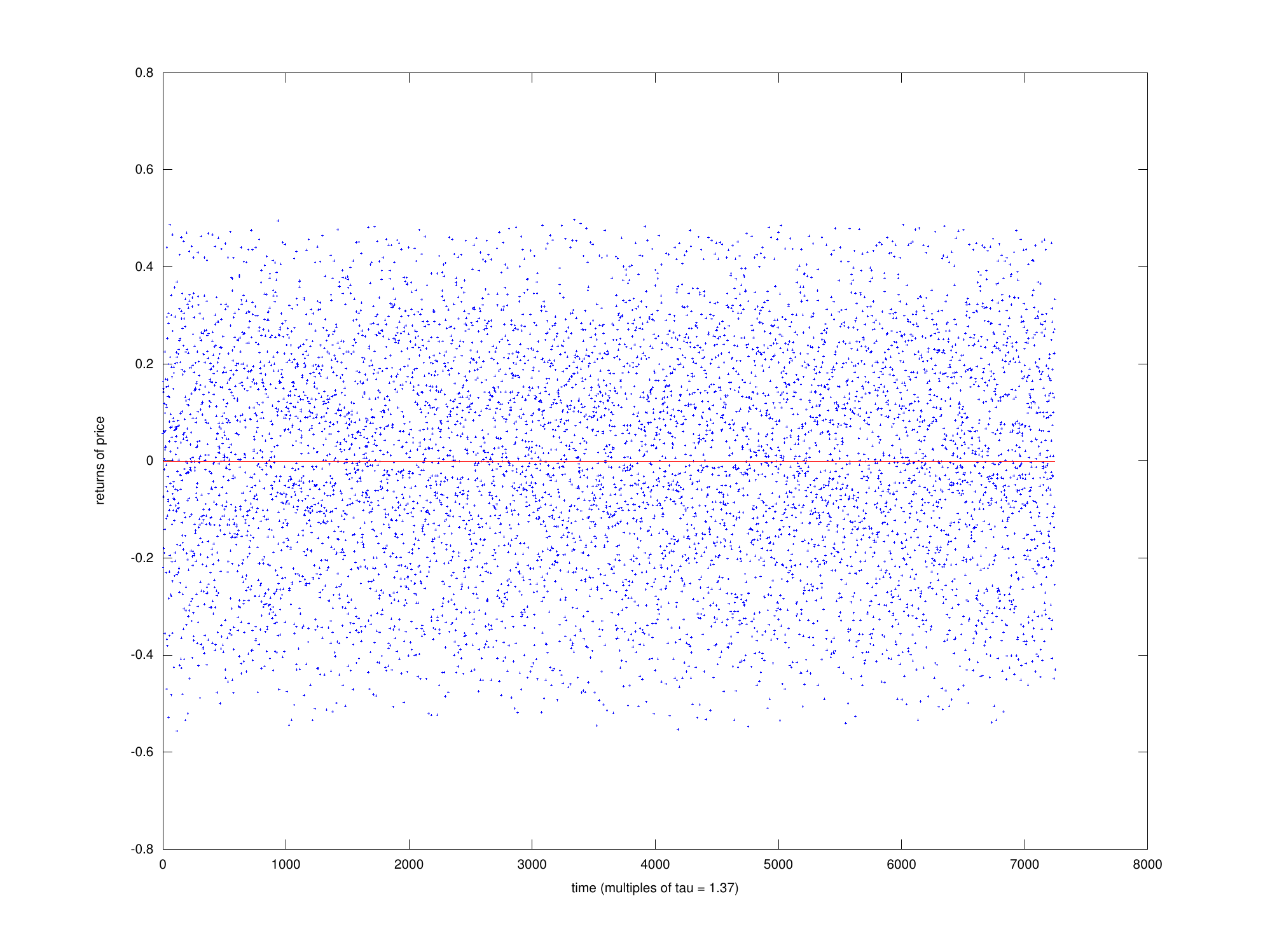}
\includegraphics[width=0.3\textwidth]{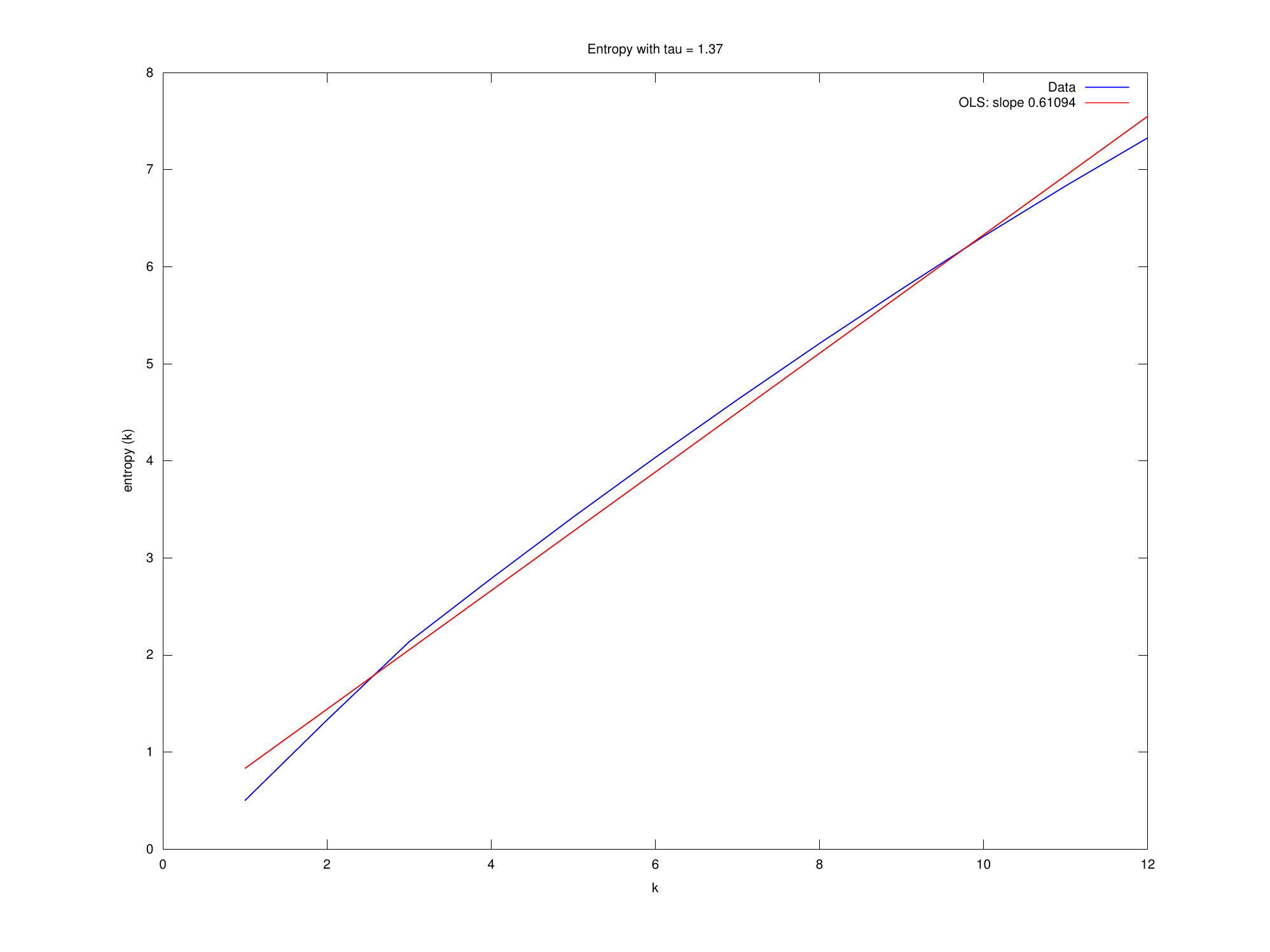}
\\
\hspace{0.3\textwidth}
\includegraphics[width=0.3\textwidth]{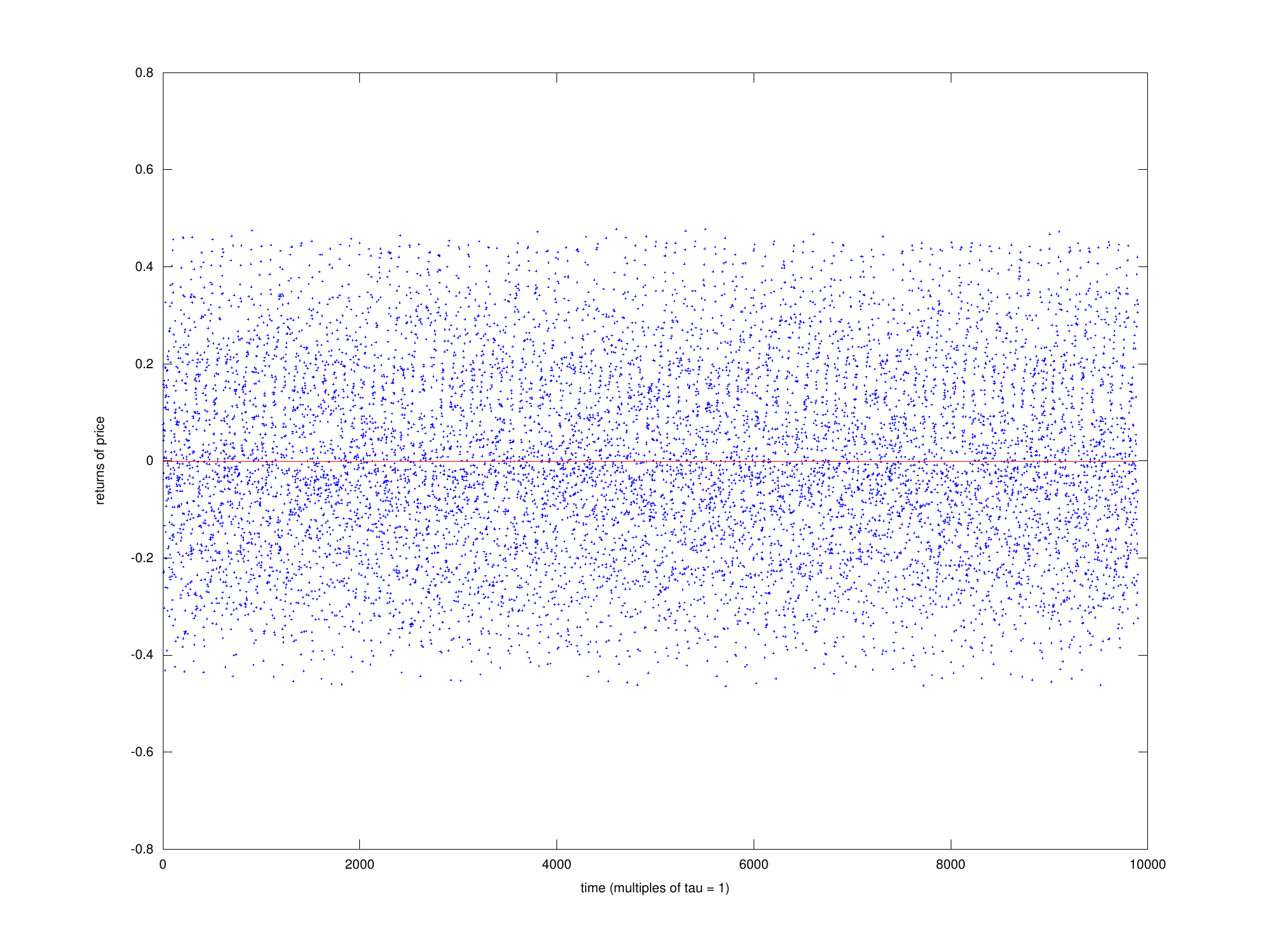}
\includegraphics[width=0.3\textwidth]{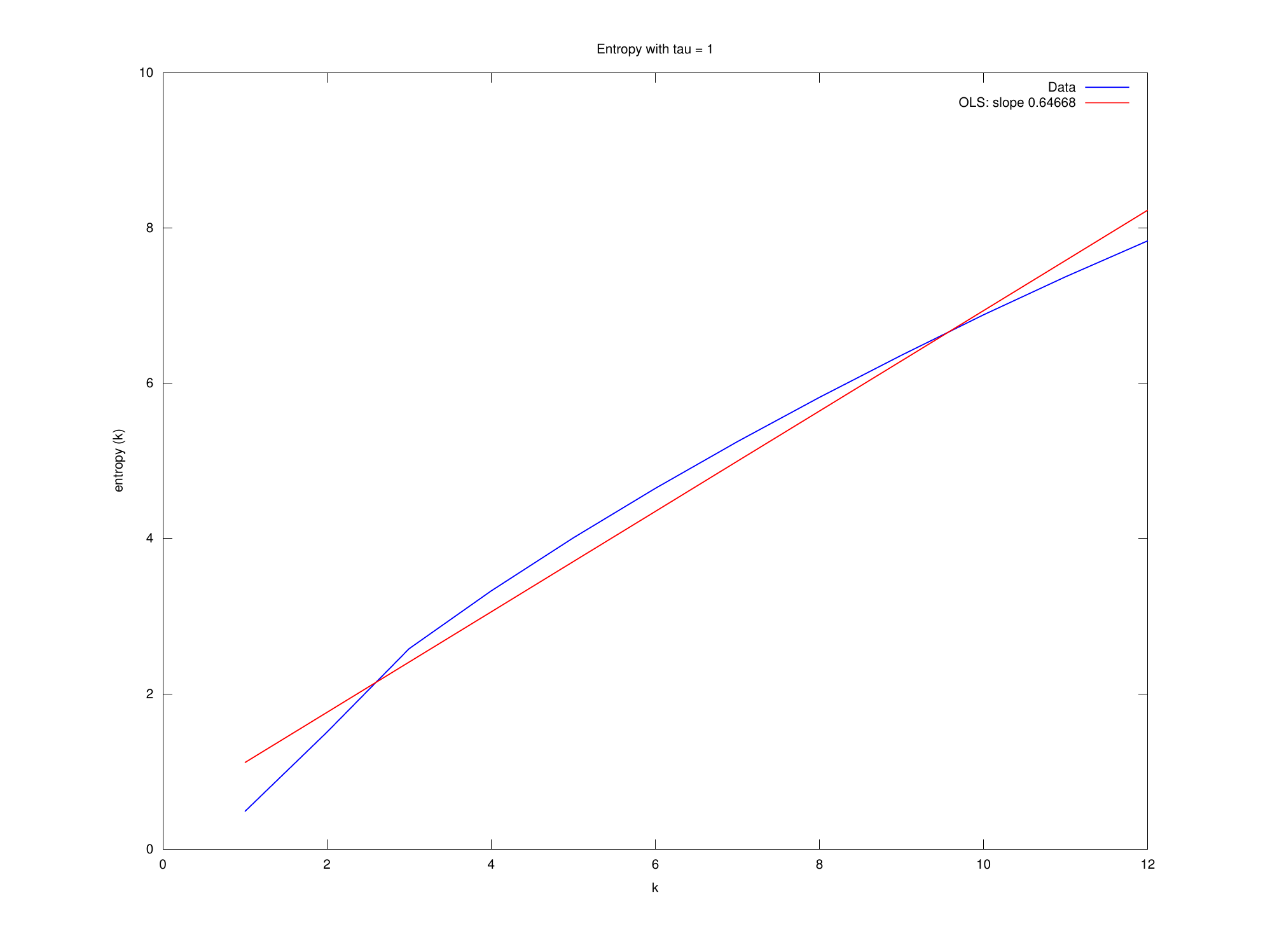}
\end{center}
\caption{\label{fig.HH.P-chaos}%
Analysis of chaos for $P$ in setting \HH. 
Top left: autocorrelation function. 
Its first (approximate) zero is at $\tau^{\star}=1.37$. 
Middle: sequence of the returns $r_k(\tau) = \log_{10} P((k+1) \tau)  - \log_{10} P(k \tau) $ 
for $\tau = \tau^{\star}$ (top) and $\tau=1$ (bottom). 
Right: estimated entropy $H_K$ as a function of $K$, 
with $\tau=\tau^{\star}$ (top; slope $0.611$ and correlation coefficient $0.995$) 
and $\tau=1$ (bottom; slope $0.647$ and correlation coefficient $0.983$). 
See Section~\ref{sec.results.HH.chaos} and Appendix~\ref{app.technical-details.chaos} for details.
}
\end{figure}

\subsection{Exploration of the parameter space}  \label{sec.results.explor}

Starting from setting \HH, we changed the parameters one by one 
between  $\gamma$ and $m_0$. 
The resulting bifurcation diagrams are shown on Figures~\ref{fig.bifurc.HH-gamma.1d.Nr} and \ref{fig.bifurc.HH-m0.1d.Nr}, 
for $N_r$ and
Figures~\ref{fig.bifurc.HH-gamma.1d.P} and \ref{fig.bifurc.HH-m0.1d.P}, for $P$. 
Appendix~\ref{app.technical-details.bifurcation} provides details about how the bifurcation diagrams have been obtained.


\subsubsection{Setting \HH\ with $\gamma$ varying} \label{sec.results.explor.HH-gamma}
A bifurcation diagram is plot on Figure~\ref{fig.bifurc.HH-gamma.1d.Nr}, 
showing $\set{ N_r(t) , t \in \N}$ as a function of $\gamma \in [2,10]$. 
A corresponding bifurcation diagram for the price $P$ is shown on 
Figure~\ref{fig.bifurc.HH-gamma.1d.P}. 
The dynamics look like complex for the largest part of the interval, with only few and very small windows of periodicity.
In particular, chaotic behavior seems to arise also for small values of $ \gamma $ (i.e. when the fertility rate $ m(N) $ depends
in a weaker way on the population $N$) which are probably more reasonable in a human-controlled breeding facility.
\begin{figure}
\begin{center}
\includegraphics[width=0.8\textwidth]{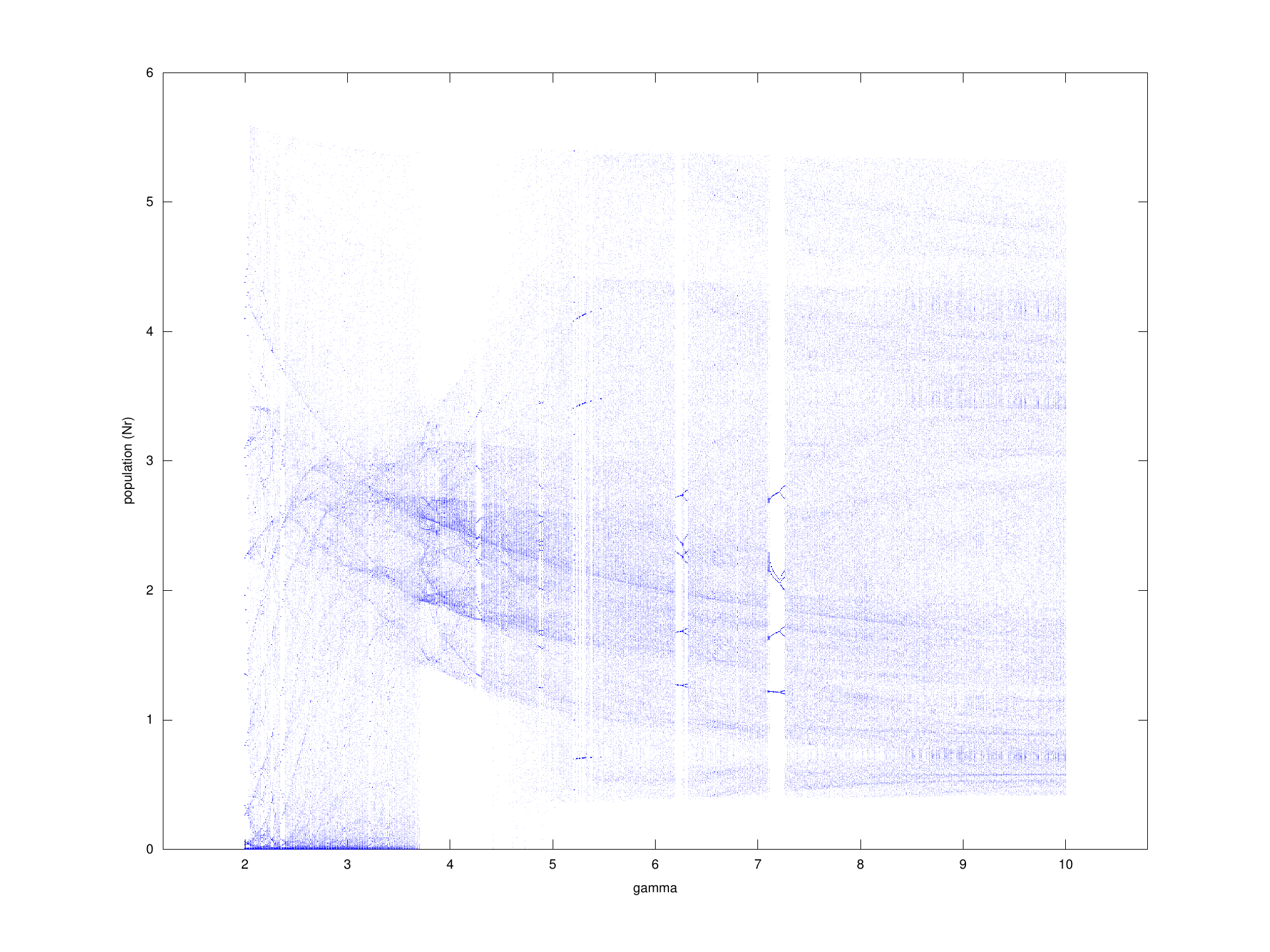}
\end{center}
\caption{\label{fig.bifurc.HH-gamma.1d.Nr}Bifurcation diagram for $N_r(t)$ w.r.t. $\gamma \in [2,10]$ around setting \HH. 
}
\end{figure}
\begin{figure}
\begin{center}
\includegraphics[width=0.8\textwidth]{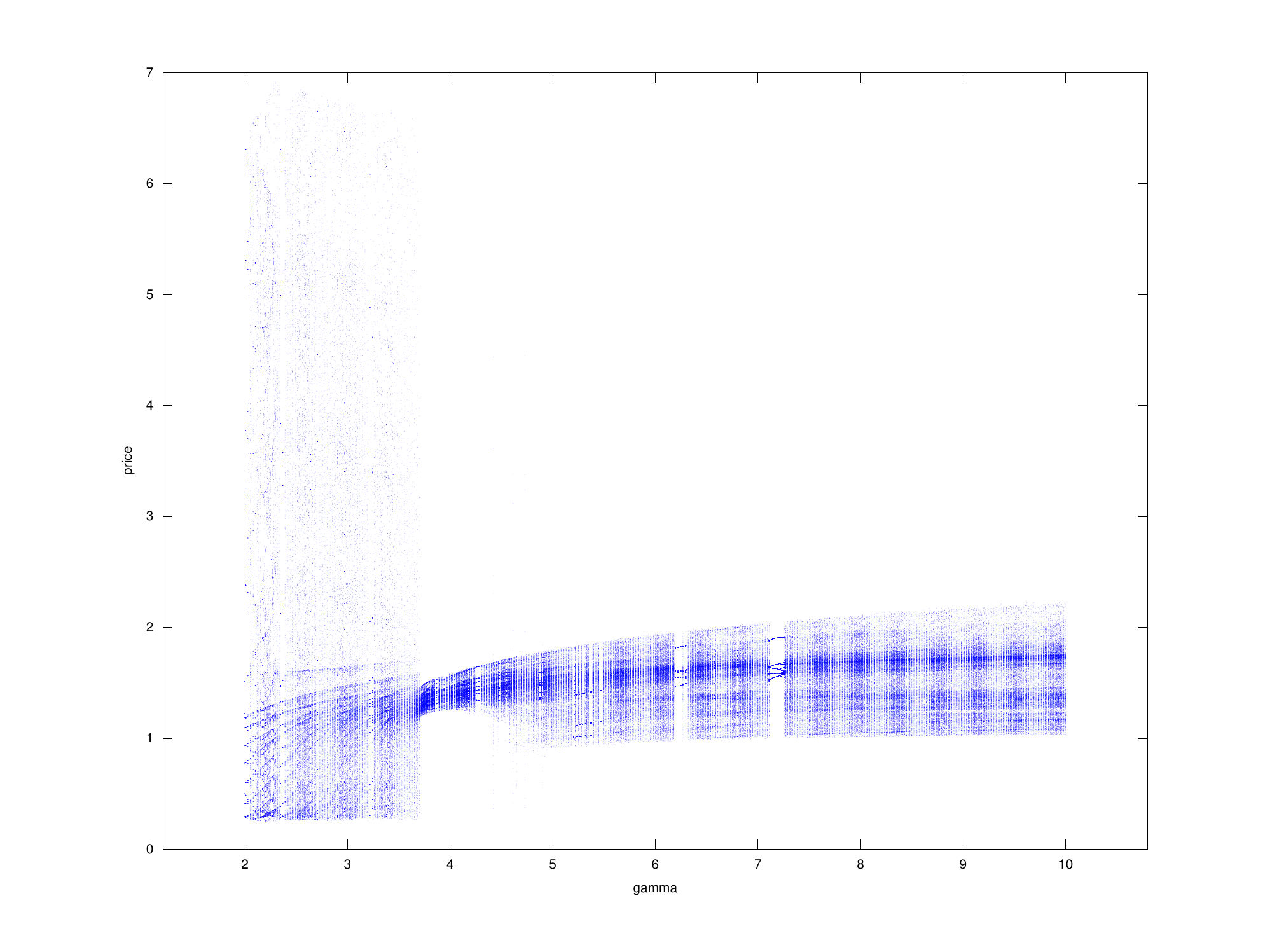}
\end{center}
\caption{\label{fig.bifurc.HH-gamma.1d.P}Bifurcation diagram for $P(t)$ w.r.t. $\gamma \in [2,10]$ around setting \HH. 
}
\end{figure}

\subsubsection{Setting \HH\ with $m_0$ varying} \label{sec.results.explor.HH-m0}
A bifurcation diagram is plot on Figure~\ref{fig.bifurc.HH-m0.1d.Nr}, showing $\set{ N_r(t) , t \in \N}$ as a function of $m_0 \in [2,8]$. 
The corresponding bifurcation diagram for the price $P$ is shown on Figure~\ref{fig.bifurc.HH-m0.1d.P}.
Here period doubling cascades seem to arise in the windows of periodicity.
Moreover, at a closer inspection of Figures~\ref{fig.bifurc.HH-m0.1d.Nr} and~\ref{fig.bifurc.HH-m0.1d.P}, a relatively small amplitude Hopf bifurcation
seems to happen near $ m_{0} \approx 5.4 $. 
\begin{figure}
\begin{center}
\includegraphics[width=0.8\textwidth]{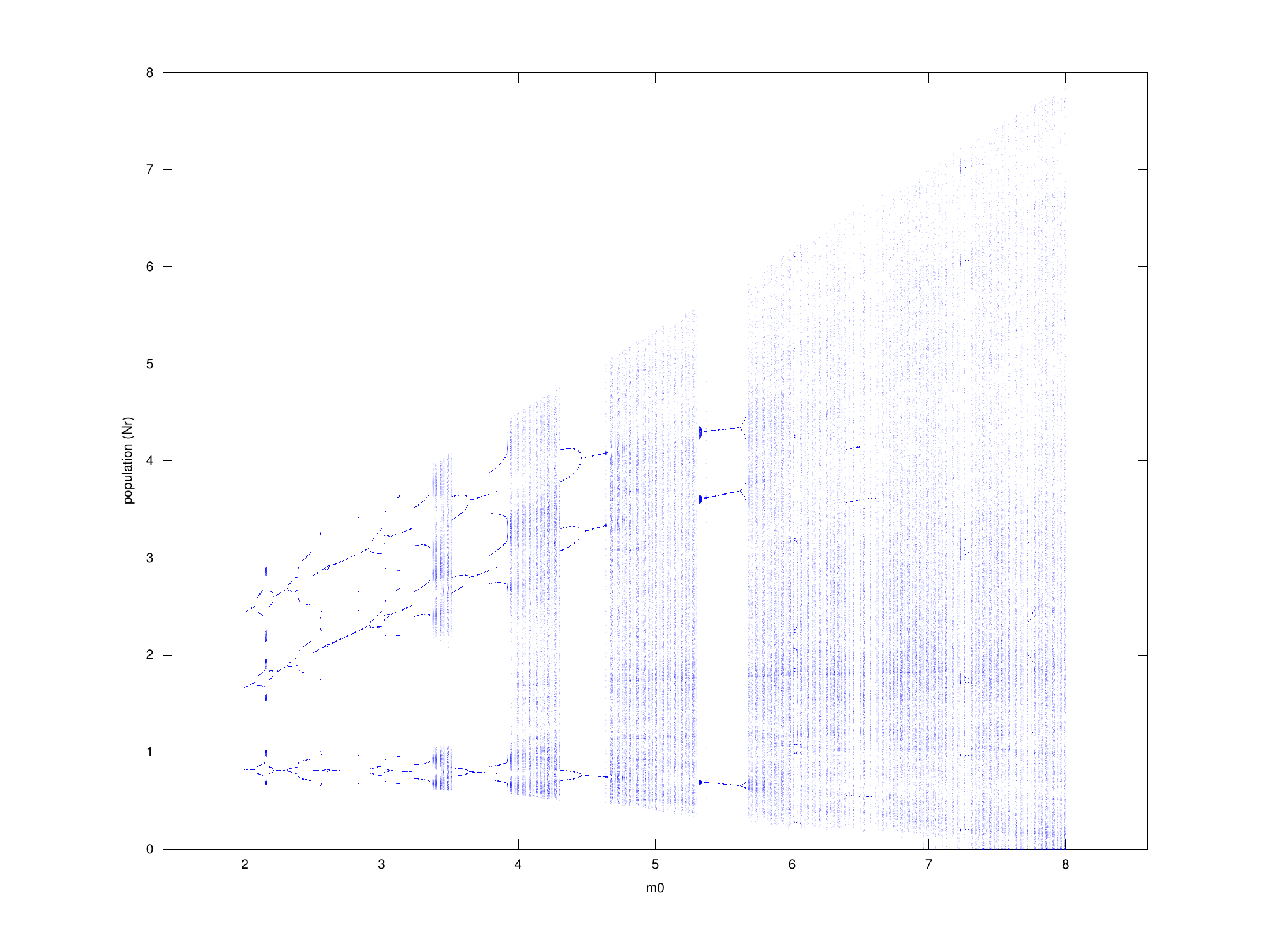}
\end{center}
\caption{\label{fig.bifurc.HH-m0.1d.Nr}Bifurcation diagram for $N_r(t)$ w.r.t. $m_0 \in [2,8]$ around setting \HH. 
}
\end{figure}
\begin{figure}
\begin{center}
\includegraphics[width=0.8\textwidth]{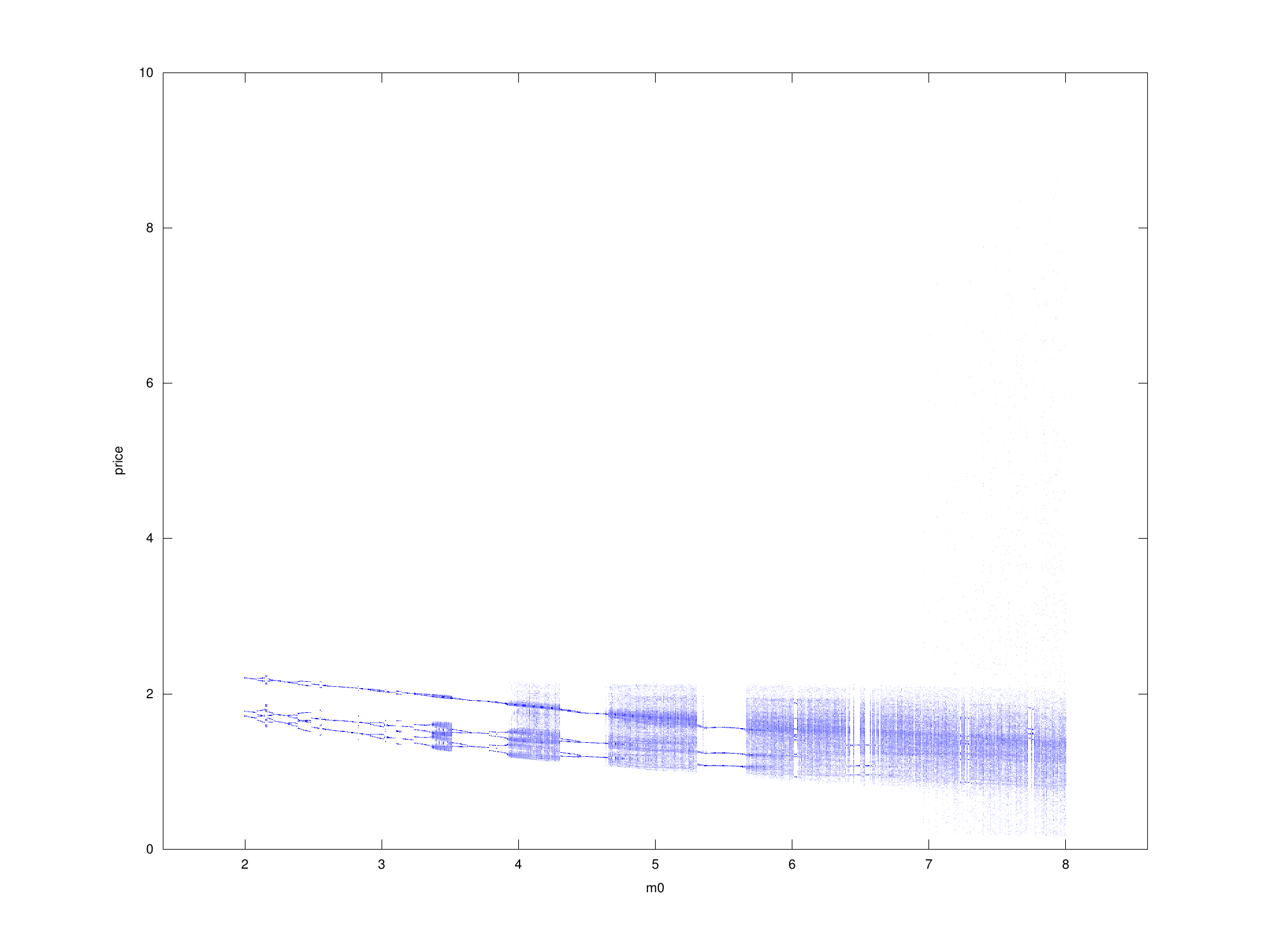}
\end{center}
\caption{\label{fig.bifurc.HH-m0.1d.P}Bifurcation diagram for $P(t)$ w.r.t. $m_0 \in [2,8]$ around setting \HH. 
}
\end{figure}

\bibliographystyle{alpha}

\begin{thebibliography}{AMMY18}

\bibitem[AMMY18]{AMMY:2018}
Sylvain Arlot, Stefano Marmi, Carlos Matheus, and Nigel~Gilles Yoccoz.
\newblock Dynamique de populations de petits mammif\`eres, saisonnalit\'e et
  attracteur de H\'enon -- comment une question d'\'ecologie a pu int\'eresser
  Jean-Christophe.
\newblock {\em Gazette des math\'ematiciens}, Num\'ero sp\'ecial Jean-Christophe Yoccoz, 6--14, April 2018.

\bibitem[Arl04]{Arl:2004}
Sylvain Arlot.
\newblock {\'E}tude d'un mod{\`e}le de dynamique des populations.
\newblock Technical Report 2004-38, University Paris-Sud 11 Orsay, September
  2004.
\newblock arXiv:1204.0799 Master thesis. Advisor: Jean-Christophe Yoccoz.

\bibitem[BM89]{Bel_Mac:1989}
Jacques B{\'e}lair and Michael~C. Mackey.
\newblock Consumer memory and price fluctuations in commodity markets: an
  integrodifferential model.
\newblock {\em J. Dynam. Differential Equations}, 1(3):299--325, 1989.

\bibitem[Cam09]{Cam:2009}
Gustavo Campodoni.
\newblock I sistemi di allevamento.
\newblock {\em Il bosco e l'allevamento della razza Cinta Senese},
  ARSIA:121--166, 2009.

\bibitem[Chi88]{Chi:1988}
C.~Chiarella.
\newblock The cobweb model: its instability and the onset of chaos.
\newblock {\em Economic Modelling}, 5(4):377--384, 1988.

\bibitem[Eze38]{Eze:1938}
Mordecai Ezekiel.
\newblock The cobweb theorem.
\newblock {\em The Quarterly Journal of Economics}, 52(2):255--280, 1938.

\bibitem[FP07]{Fra_Pug:2007}
Oreste Franci and Caterina Pugliese.
\newblock Italian autochthonous pigs: progress report and research
  perspectives.
\newblock {\em Ital. J. Anim. Sci.}, 6:663--671, 2007.

\bibitem[GM86]{Gra_Mal:1986}
J.-M. Grandmont and P.~Malgrange.
\newblock Nonlinear economics dynamics: Introduction.
\newblock {\em Journal of Economic Theory}, 40(1):3--12, 1986.

\bibitem[Han28]{Hanau:1928}
Arthur Hanau.
\newblock {\em Die Prognose der Schweinepreise}.
\newblock {PhD} dissertation, In: Vierteljahreshefte zur Konjunkturforschung,
  Verlag Reimar Hobbing, Berlin., 1928.

\bibitem[Har64]{Har:1964}
Philip Hartman.
\newblock {\em Ordinary Differential Equations}.
\newblock John Wiley and Sons, Inc., 1964.

\bibitem[Hom94]{Hommes:1994}
C.~Hommes.
\newblock Dynamics of the cobweb model with adaptive expectations and nonlinear
  suppy and demand.
\newblock {\em Journal of Economic Behavior and Organization}, 24:315--335,
  1994.

\bibitem[Hom13]{Hommes:2013}
Cars Hommes.
\newblock {\em Behavioral Rationality and Heterogeneous Expectations in Complex
  Economic Systems}.
\newblock Cambridge University Press, 2013.

\bibitem[HS87]{Hay_Sch:1987}
Dermot~J. Hayes and Andrew Schmitz.
\newblock Hog cycles and countercyclical production response.
\newblock {\em American Journal of Agricultural Economics}, 69(4):762--770,
  1987.

\bibitem[KH13]{Kna_Hos:2013}
Mark~T. Knauer and Chris~E. Hostetler­.
\newblock Us swine industry productivity analysis, 2005 to ­2010.
\newblock {\em J. Swine Health Prod.}, 21:248--252, 2013.

\bibitem[KS03]{Kan_Sch:2003}
Holger Kantz and Thomas Schreider.
\newblock {\em Nonlinear Time Series Analysis}.
\newblock Cambridge University Press, 2003.

\bibitem[Mac89]{Mac:1989}
Michael~C. Mackey.
\newblock Commodity price fluctuations: price dependent delays and
  nonlinearities as explanatory factors.
\newblock {\em J. Econom. Theory}, 48(2):497--509, 1989.

\bibitem[NPV12]{Nie_Pac_Vie:2012}
Juan~Jos{\'e} Nieto, Maria~Jos{\'e} Pacifico, and Jos{\'e}~Ladislau Vieitez.
\newblock Long-Term and Short-Term Dynamics of Microtus epiroticus: A Yoccoz--Birkeland Model.
\newblock {\em SIAM J. Appl. Dyn. Syst.}, 11(4): 1499--1532, 2012. 

\bibitem[RMS94]{Ros_Mur_Sch:1994}
S.~Rosen, K.M. Murphy, and J.A. Scheinkman.
\newblock Cattle cycles.
\newblock {\em The Journal of Political Economy}, 102(3):468--492, 1994.

\bibitem[Sor79]{Sor:1979}
Anton~Marinus Sorensen.
\newblock {\em Animal Reproduction, Principles and Practices}.
\newblock McGraw-Hill, 1979.

\bibitem[YB98]{Yoc_Bir:1998}
Jean-Christophe Yoccoz and H.~Birkeland.
\newblock Informal commentaries on the numerical investigation of the toy
  model.
\newblock September 1998.

\bibitem[YI99]{Yoc_Ims:1999}
Nigel~G. Yoccoz and Rolf~A. Ims.
\newblock Demography of small mammals in cold regions: the importance of
  environmental variability.
\newblock {\em Ecological Bulletins}, 47:133--144, 1999.

\bibitem[Zan09]{Zanda:2009}
Nicola Zanda.
\newblock Personal communication, 2009.

\end{thebibliography}

\appendix

\section{Technical details about the simulations} \label{app.technical-details}

This section provides all technical details necessary to reproduce our numerical experiments. 

\subsection{Discretization of the model} \label{app.technical-details.discretization}

Throughout the paper, we consider the following discretization with $q=100$ steps per year.
This will lead to a deterministic dynamical system in a phase space of dimension $ 2 \times 201 $.

\subsubsection{Notation and parameters}

We replace the continous time parameter $t \in [0,+\infty)$ by indices $i \in \N\backslash\set{0}$. \\
Roughly, $i \geq 1$ replaces the interval $\left( \frac{i-1}{q} , \frac{i}{q} \right]$ 
where $q \geq 1$ is an integer (number of steps per year). 

\begin{itemize}
\item $q \in \N\backslash\set{0}$ is the number of steps per year
\item $N_{r,i} $ approximates the \emph{average} of $N_r(t)$ over $t \in \left( \frac{i-1}{q} , \frac{i}{q} \right]$: 
\[ N_{r,i} \approx q \int_{\frac{i-1}{q}}^{\frac{i}{q}} N_r(t) dt \]
\item Similarly, $N_{b,i}$, $\rhoseasondiscrete{i}$, $S_i$ and $P_i$ are respectively defined 
from $N_{b}(t)$, $\rhoseason (t)$, $S(t)$ and $P(t)$. 
\item $B_{r,i}$ approximates the \emph{number} of newborn females put in the reproducing line 
in the interval $t \in \left( \frac{i-1}{q} , \frac{i}{q} \right]$: 
\[ B_{r,i} \approx \int_{\frac{i-1}{q}}^{\frac{i}{q}} B_r(t) dt  \]
Note the difference with $N_r$ coming from the fact $B_r(t)$ is a density (number per unit of time) 
contrary to $N_r(t)$ that is a number of individuals. 
\\
Similarly, $B_{b,i}$, $B_{f,i}$, $B_{m,i}$ are defined respectively from $B_{b}(t)$, $B_{f}(t)$, $B_{m}(t)$.
\end{itemize}

We also need to discretize some of the model parameters (with a few technical modifications in order to avoid some troubles):
\begin{itemize}
\item $k_{A_0}=\max\set{1, \croch{q A_0} }$ where $\croch{x}$ is the closest integer to $x$. 
We want $k_{A_0} >0$ so that $N_{r,k}$ only depends 
on the past of $B_r$ in \eqref{eq.modelB-discrete.Nr}, 
otherwise we would have a circular definition.
\item $k_{A_1}=\max\set{k_{A_0},\croch{q A_1}-1}$. 
We want $k_{A_1}\geq k_{A_0}$ in order to make the sum \eqref{eq.modelB-discrete.Nr} non-empty when $A_0=A_1$. 
We take $\croch{q A_1}-1$ instead of $\croch{q A_1}$ because animals are supposed to die exactly at age $A_1$, 
so individual of age $A_1$ at time $k/q$ (counted in $B_{r , k-\croch{q A_1}}$) do not count in $N_{r,k}$. 
\item $k_{\Omega_0}=\max\set{1, \croch{q \Omega_0} }$ (similarly to $k_{A_0}$)
\item $k_{\Omega_1}=\max\set{k_{\Omega_0},\croch{q \Omega_1}-1}$ (similarly to $k_{A_1}$)
\end{itemize}

\subsubsection{Discretized model}

At any time step $k \geq 1$, we compute 
\[ Z_k \egaldef (N_{r,k} \, ; \, N_{b,k} \, ; \, S_k \, ; \, P_k \, ; \, B_{r,k} \, ; \, B_{b,k}) \] 
from its past values 
\[ \paren{Z_i}_{ k - \max\{k_{A_1},k_{\Omega_1} \} \leq i \leq k-1} \enspace ,\]
in the following order:
\begin{align} \label{eq.modelB-discrete.Nr}
N_{r,k} &= \sum_{j= k_{A_0}}^{k_{A_1}} B_{r,k-j} \\ \label{eq.modelB-discrete.Nb}
N_{b,k} &= \sum_{j= k_{\Omega_0}}^{k_{\Omega_1}} B_{b,k-j} \\ \label{eq.modelB-discrete.S}
S_k &= \frac{q N_{b,k}}{k_{\Omega_1}-k_{\Omega_0}+1} \\ \label{eq.modelB-discrete.P}
P_k &= \max\set{ 0, P_{k-1} + \frac{\lambda P_{k-1} F \bigl( D(P_{k-1}),S_k \bigr)}{q} }
\\ \label{eq.modelB-discrete.Br}
B_{r,k} &= \frac{m_0}{q} \rhoseasondiscrete{k} m(N_{r,k}) R(P_k) 
\\ \label{eq.modelB-discrete.Bb}
B_{b,k} &= \frac{m_0}{q} \rhoseasondiscrete{k} m(N_{r,k}) \parenb{2 - R(P_k)}
\end{align}

\subsection{Initial condition} \label{app.technical-details.init}
The initial condition is given by the values of the number of newborns ($B_{r,i}$ and $B_{b,i}$) 
for time steps $i=1 , \ldots , k_{A_1}$ before the start of the simulation, 
since the discretized model above (Eq.~\eqref{eq.modelB-discrete.Nr}--\eqref{eq.modelB-discrete.Bb}) 
only needs the past values of $B_r$ and $B_b$ to compute the future dynamics of the model. 

For each parameter set, we choose an initial condition randomly (but with a known random seed, 
so we can use repeatedly the same initial condition to obtain our bifurcation diagrams, for instance) as follows: 
$ B_{r,1} ,\ldots, B_{r,k_{A_1}} , B_{b,1} , \ldots, B_{b,k_{A_1}} $  are chosen independent 
with common distribution $ \mathcal{U}\paren{ \croch{ 0 , \frac{2}{k_{A_1} - k_{A_0} + 1} }} $.
The reason for this choice is that $k_{A_1} - k_{A_0} + 1$ is the number of time steps 
corresponding to reproducing ages of females. 
So, for instance, by Eq.~\eqref{eq.modelB-discrete.Nr}, the reproducing female population at time $k_{A_1}+1$ 
is the average of random variables uniform over $[0,2]$, so it should be close to 1, 
the threshold value for density-dependence to apply. 

\subsection{Chaos analysis} \label{app.technical-details.chaos}
In each setting and for each time series $Y \in \set{N_r,P}$, 
we start from the ``continuous'' time series of $Y(t)$ for $t>T_{\max}-10\,000$, 
where $T_{\max}$ is the total length of the simulation experiment 
(i.e., $T_{\max}=300\,000$ for setting \HH, and $T_{\max}=100\,000$ for other settings). 
Then, we compute the empirical autocorrelation function of $Y$ with lags $\tau \in [0,100]$ 
($\tau$ is expressed in years). 
We get its first ``zero'' $\tau^{\star}$ as the first point where the empirical autocorrelation crosses zero, 
and we then check that the value of the empirical autocorrelation at $\tau=\tau^{\star}$ 
is smaller than an arbitrary threshold (here, $10^{-2}$); 
this holds true in all the settings where we did such an analysis. 

Then, we consider the sequence $Y(t)$ only at times $t$ equal to an integer multiple of $\tau^{\star}$, 
and compute the returns $\log_{10} (Y((i+1)\tau^{\star}) / Y( i \tau^{\star}))$ 
and their respective signs $\varepsilon_i \in \set{-1,1}$. 
From this (finite) sequence of signs, 
we compute for $K=1, \ldots, 12$ the combinatorial entropy $H_K (\tau^{\star})$ of 
$( (\varepsilon_i)_{k-K+1 \leq i \leq k} )_{k \in \N}$:
\[ 
H_K (\tau^{\star}) \egaldef - \sum_{ x \in \set{-1,1}^K } p_x \log_2(p_x) 
\qquad \mbox{where} \quad 
p_x = \Proba \parenb{ (\varepsilon_i)_{k-K+1 \leq i \leq k} = x} 
\]
the latter probability being with respect to $k$. 

Finally, we plot $H_K (\tau^{\star})$ as a function of $K$ and 
perform a standard robust linear regression in order to estimate its slope.

We also do the same analysis when considering $Y(t)$ at integer times, 
leading to a plot of $H_K (1)$ as a function of $K$, 
on which we estimate its slope by performing a standard robust linear regression. 
\begin{figure}
\begin{center}
\includegraphics[width=0.8\textwidth]{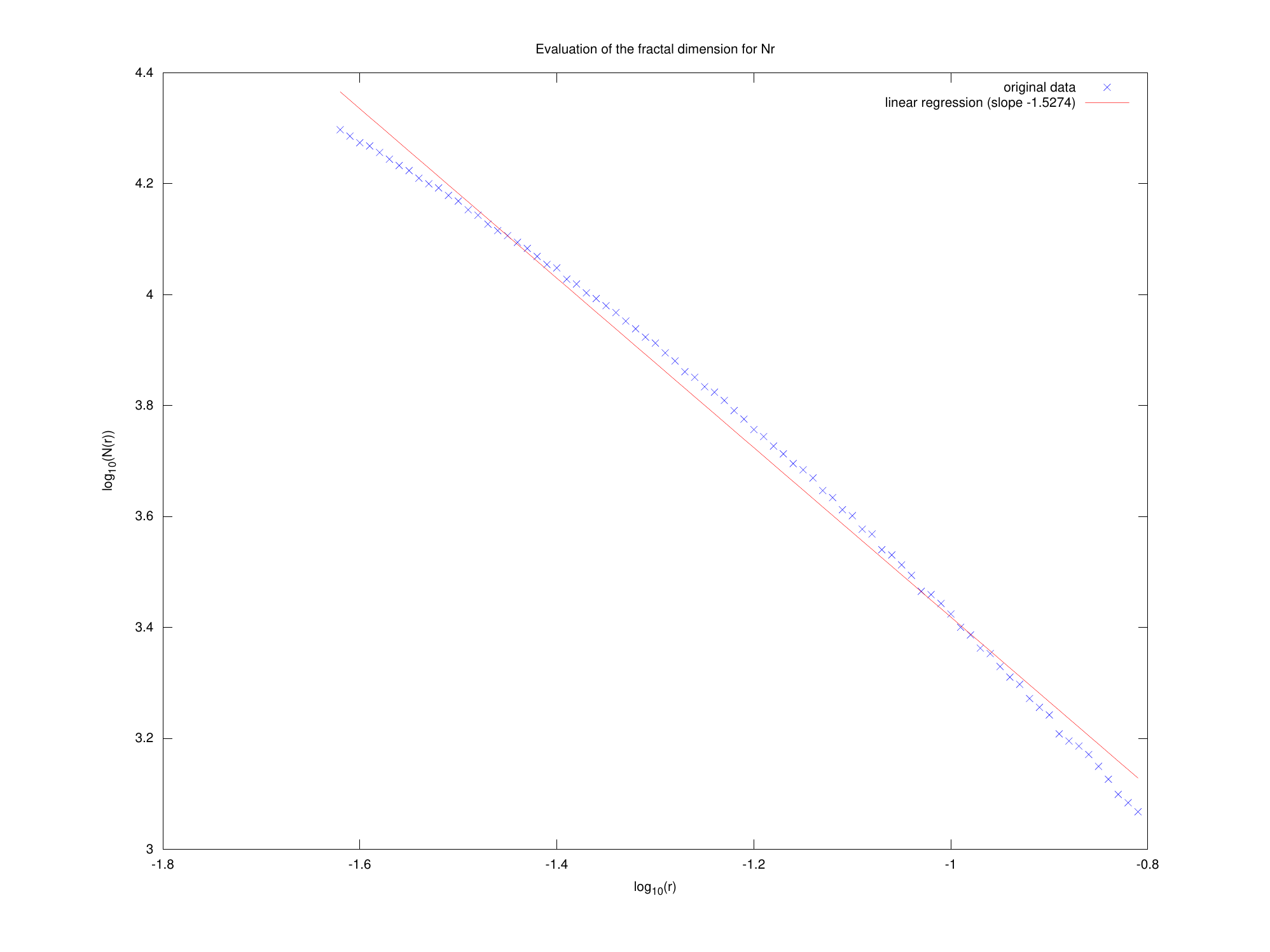}
\end{center}
\caption{\label{fig.HH.Nr-dim_f}
Estimation of the fractal dimension of the attractor of Figure~\ref{fig.HH.Nr-3d.less-pts} (setting \HH, $N_r$).}
\end{figure}
\begin{figure}
\begin{center}
\includegraphics[width=0.8\textwidth]{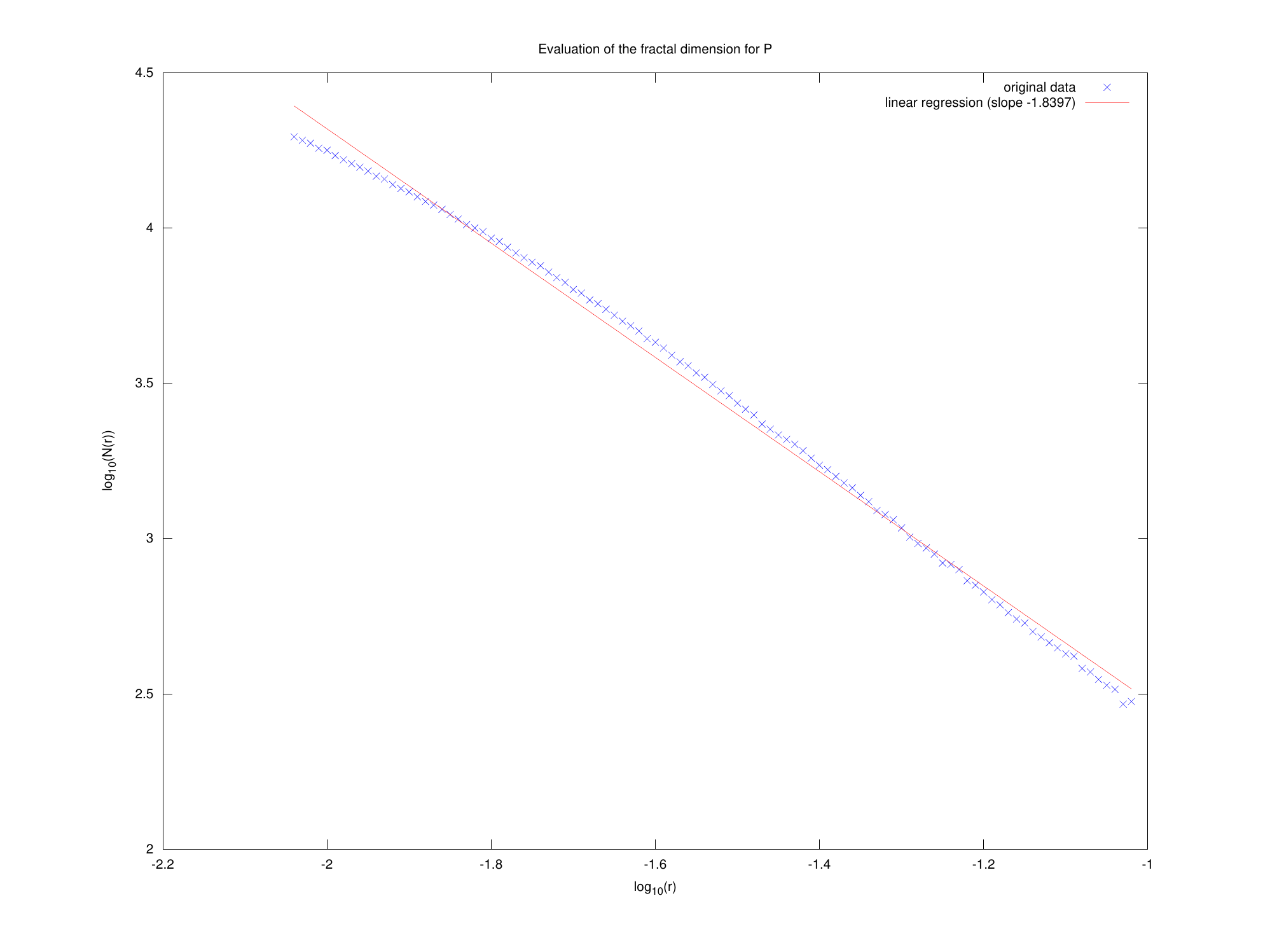}
\end{center}
\caption{\label{fig.HH.P-dim_f}
Estimation of the fractal dimension of the attractor of Figure~\ref{fig.HH.P-3d.less-pts} (setting \HH, $P$).}
\end{figure}

\subsection{Fractal dimension} \label{app.technical-details.dim_f}
For computing the fractal dimension of the attractors for setting \HH, 
we started from their 3-dimensional visualization over $200\,000$ years, that is, for $N_r$ for instance: 
\[ \cK = \Bigl\{ \bigl( N_r(t),N_r(t+1), N_r(t+2) \bigr)  \, , \, 100\,000 \leq t \leq 300\,000 \Bigr\} \enspace . \]

Then, for various values of $\epsilon>0$, we compute the number 
$\widetilde{\cN}_{\epsilon}(\cK)$ of cubes $\mathcal{C}_{i,j,k} = [i \epsilon;
(i+1) \epsilon] \times [j \epsilon; (j+1) \epsilon] \times [k
\epsilon; (k+1) \epsilon]$ that contain at least one point of $\cK$. 
Figure~\ref{fig.HH.Nr-dim_f} thus represents $\log_{10} \widetilde{\cN}_{\epsilon}(\cK)$ as a function of 
$\log_{10} \epsilon$.

Theoretically, the fractal dimension is the opposite of the slope of this graph at infinity. 
Here, since $\cK$ is finite, $\widetilde{\cN}_{\epsilon}(\cK)$ is constant equal to $\card(\cK)$ for small $\epsilon$. 
So, we estimated the slope of the graph only for a limited set of values of $\epsilon$. 
One can check on Figure~\ref{fig.HH.Nr-dim_f} that the linear fit seems reasonably close to the original curve. 

Because of the numerous approximations made during this estimation, 
the precise value of the fractal dimension should not be taken into account too seriously, 
but its order of magnitude should be correct. 
Figure~\ref{fig.HH.P-dim_f} shows the corresponding estimation with $P$ instead of $N_r$. 

\subsection{Bifurcation diagrams} \label{app.technical-details.bifurcation}
We took the same initial condition for all settings in all diagrams (by choosing a fixed arbitrary seed to the random number generator). 
Then, for each value of the parameter we plot the sequence of values of $N_r(t)$ (resp. $P(t)$) for $t \in \set{ 1\,500 , \ldots, 2\,000 }$. 
In all diagrams, the parameters are varying by step of $0.01$.

\end{document}